\theoremstyle{plain}
\newtheorem{thm}{Theorem}[section]		
\newtheorem{prop}[thm]{Proposition}
\newtheorem{cor}[thm]{Corollary}
\newtheorem{lem}[thm]{Lemma}
\theoremstyle{definition}
\newtheorem{df}{Definition}[section]
\theoremstyle{remark}
\newtheorem{rmk}{Remark}[section]
\newtheorem*{ac}{Acknowledgements}
\newcommand{\To}{\Longrightarrow}
\newcommand{\nn}{\mathbb{N}}
\newcommand{\zz}{\mathbb{Z}}
\newcommand{\qq}{\mathbb{Q}}
\newcommand{\rr}{\mathbb{R}}
\newcommand{\elel}{\mathrm{L}}
\renewcommand{\labelenumi}{{\rm (\arabic{enumi})}}
\DeclareMathOperator{\card}{card}
\DeclareMathOperator{\cl}{CL}
\DeclareMathOperator{\pc}{\mathscr{P}}
\DeclareMathOperator{\nai}{INT}
\DeclareMathOperator{\met}{M}
\DeclareMathOperator{\seq}{Seq}
\DeclareMathOperator{\clo}{\mathcal{C}}
\DeclareMathOperator{\ult}{UM}
\DeclareMathOperator{\cu}{CUM}
\DeclareMathOperator{\map}{Map}
\DeclareMathOperator{\coi}{coi}
\begin{document}

\title[An Embedding, An Extension,  and  An Interpolation]
{An Embedding, An Extension,  and  An Interpolation of Ultrametrics}
\author[Yoshito Ishiki]
{Yoshito Ishiki}
\address[Yoshito Ishiki]
{\endgraf
Graduate School of Pure and Applied Sciences
\endgraf
University of Tsukuba
\endgraf
Tennodai 1-1-1, Tsukuba, Ibaraki, 305-8571, Japan}
\email{ishiki@math.tsukuba.ac.jp}

\date{\today}
\subjclass[2020]{Primary 54E35; Secondary 30L05, 54E52}
\keywords{Ultrametric space, Baire space, Isometric embedding}
\thanks{The author was supported by JSPS KAKENHI Grant Number 18J21300.}

\begin{abstract}
The notion of  ultrametrics can be  considered as a zero-dimensional  analogue of ordinary metrics, 
and  it is expected to prove  ultrametric versions of theorems on metric spaces. 
In this  paper, 
we provide ultrametric versions of 
the Arens--Eells isometric embedding theorem of metric spaces, 
the Hausdorff extension theorem of metrics, 
the Niemytzki--Tychonoff characterization  theorem of the compactness, 
and the author's  interpolation theorem of metrics and theorems on dense subsets of spaces of metrics. 
\end{abstract}
\maketitle

\section{Introduction}\label{intro}

Let $X$ be a  set. 
A metric $d$ on $X$ is said to be an \emph{ultrametric} or  
a \emph{non-Archimedean metric}
if for all $x, y, z\in X$ we have 
\begin{align}\label{al:ult}
d(x, y)\le d(x, z)\lor d(z, y), 
\end{align}
where the symbol 
$\lor $ 
stands for the maximum operator on $\rr$. 
The inequality (\ref{al:ult}) 
is called the \emph{strong triangle inequality}.  
We say that a set 
$S$  
is a \emph{range set} 
if  
$S\subseteq [0, \infty)$ 
and 
$0\in S$. 
For a range set 
$S$, 
we say that a metric 
$d: X^2\to [0, \infty)$ 
on 
$X$ 
is 
\emph{$S$-valued} 
if 
$d(X^2)\subseteq S$. 
Note that 
$[0, \infty)$-valued ultrametrics are nothing but ultrametrics. 
The $S$-valued ultrametrics are studied as a special case and a reasonable restriction of ultrametrics. 
Gao and Shao \cite{GS11} studied 
$R$-valued Urysohn  universal ultrametric spaces 
for a countable range  set $R$. 
Brodskiy, Dydak, Higes and Mitra \cite{BDHM} utilized 
$(\{0\}\cup \{\, 3^{n}\mid n\in \zz\, \})$-valued ultrametrics 
for their study on the $0$-dimensionality in categories of metric spaces. 
\par

The notion of  ultrametrics can be considered as 
a $0$-dimensional  analogue of ordinary metrics, 
and  it is often expected to prove ultrametric versions of theorems 
on metric spaces. 
In this paper, 
for every range set $S$, 
we provide $S$-valued ultrametric versions of 
the Arens--Eells isometric embedding theorem \cite{AE} of metric spaces, 
the Hausdorff extension theorem \cite{Haus1} of metrics, 
the Niemytzki--Tychonoff characterization \cite{NT} of the compactness, 
and the author's  interpolation theorem of metrics and theorems 
on dense $G_{\delta}$ subsets of spaces of metrics
\cite{IsI}. 
\par
%%%%%%%%%%%MMM
\begin{rmk}\label{rmk:ps}
For every range set $S$, 
there exists an $S$-valued ultrametric space $(X, d)$ such that 
$d(X^2)=S$. 
Define an ultrametric $d$ on $S$ by 
\[
d(x, y)=
\begin{cases}
x\lor y & \text{if $x\neq y$}\\
0  & \text{if $x=y$}. 
\end{cases}
\]
Then we have $d(S^2)=S$, and hence $(S, d)$ is a desired space. 
This construction  was given by  
Delhomm\'{e}, Laflamme, Pouzet and Sauer \cite[Proposition 2]{DLPS} (see also \cite{DDP}). 
We use their ultrametrics in Lemmas \ref{lem:ultcb} and   \ref{lem:u-atai}. 
\end{rmk}
%%%%%%%%%%%%%%%%%MMM
In 1956,  
Arens and Eells \cite{AE} established  the result which today we call the 
Arens--Eells embedding theorem, 
stating that 
for  every  metric space $(X, d)$, 
there exist a real normed linear space  
$V$ 
and 
an isometric embedding 
$I: X\to V$ 
such that 
\begin{enumerate}
\item $I(X)$ is closed in $V$;
\item $I(X)$ is  linearly independent in $V$.
\end{enumerate}
Before stating  our first main result, 
we introduce some basic notions. 
Let $R$ be a commutative ring, 
and 
let $V$ be an $R$-module. 
A subset $S$ of $V$ is said to be 
\emph{$R$-independent} 
if 
for every finite subset 
$\{f_1, \dots,  f_n\}$ of $S$, 
and for all  
$N_1, \dots, N_n\in R$, 
the identity  
$\sum_{i=1}^nN_if_i=0_V$ 
implies 
$N_i=0_R$ for all $i$, 
where $0_V$ and $0_R$ stand for 
the zero elements of $V$ and $R$, respectively. 
%%%%%%%%%%%%%%%%%%
A function 
$\|*\|: V\to [0, \infty)$ 
is said to be an 
\emph{ultra-norm on $V$} 
if the following are satisfied:
\begin{enumerate}
\item $\|x\|=0$ if and only if $x=0_V$;
\item for every $x\in V$, we have $\|-x\|=\|x\|$;
\item for all $x, y\in V$, we have $\|x+y\|\le \|x\|\lor \|y\|$. 
\end{enumerate}
The pair $(V, \|*\|)$ is called an 
\emph{ultra-normed $R$-module} 
(see also \cite{Wa}). 
\par

Megrelishvili and Shlossberg \cite{MS2} 
have already proven an ultrametric version 
of the Arens--Eells embedding theorem, 
stating that  every ultrametric space is 
isometrically embeddable into an ultra-normed Boolean group 
(a $\zz/2\zz$-module) as a closed set of it, 
which is a consequence of  their study 
on free non-Archimedean topological groups 
and Boolean groups with actions from topological groups. 
By introducing module structures into  universal  
ultrametric spaces of Lemin--Lemin type \cite{LL}, 
we obtain 
a more general $S$-valued  ultrametric version of the Arens--Eells embedding theorem. 
\begin{thm}\label{thm:ultemb}
Let $S$ be a range set possessing at least two elements. 
%%%%%%%%%%%%%MM%%%%%%%%%5
Let $R$ be an integral domain, and let  
$(X, d)$ be an $S$-valued ultrametric space. 
Then there exist an $S$-valued ultra-normed $R$-module  $(V, \|*\|)$,   
and an isometric  embedding $I: X\to V$
such that 
\begin{enumerate}
\item $I(X)$ is closed in $V$;
\item $I(X)$ is $R$-independent in $V$. 
\end{enumerate}
Moreover, 
if $(X, d)$ is complete, 
then we can choose $(V, \|*\|)$ as a complete  metric space.
\end{thm}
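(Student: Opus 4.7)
The plan is to generalize the Megrelishvili--Shlossberg construction (which handled $R = \zz/2\zz$) by taking $V$ to be a free $R$-module on $X$ equipped with a Kantorovich--Rubinstein-type ultra-norm, in the spirit of the Lemin--Lemin universal ultrametric spaces.

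First I would set up the module. Using that $|S| \ge 2$, choose $c \in S$ with $c > 0$, adjoin a basepoint $\theta \notin X$, and extend $d$ to an $S$-valued ultrametric $d^{+}$ on $X^{+} := X \cup \{\theta\}$ by $d^{+}(\theta, x) := d(z_0, x) \lor c$ for a fixed $z_0 \in X$; verification of the strong triangle inequality is routine. Let $V$ be the free $R$-module on $X^{+}$ modulo the submodule $R \cdot e_{\theta}$, giving a free $R$-module with basis $\{e_x\}_{x \in X}$, and define $I(x) := e_x$, so $I(X)$ is $R$-independent automatically. Equip $V$ with the ultra-norm
\[
\|v\| := \inf\Bigl\{\, \textstyle \max_i d^{+}(y_i, z_i) : v \equiv \sum_i r_i(e_{y_i} - e_{z_i}) \pmod{R\, e_{\theta}},\ y_i, z_i \in X^{+},\ r_i \in R\, \Bigr\}.
\]

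Next I would verify the desired properties. The ultra-norm axioms follow from standard manipulations of infima. The isometry $\|e_x - e_y\| = d(x, y)$ is the crucial step: the $\le$ direction follows from the single-term representation $v = e_x - e_y$, and for the $\ge$ direction, any equality $e_x - e_y = \sum_i r_i(e_{y_i} - e_{z_i})$ in $R^{(X^{+})}$ forces $x$ and $y$ to lie in the same connected component of the graph on $X^{+}$ with edges $\{(y_i, z_i) : r_i \neq 0\}$, via a coefficient-summation argument: restricting both sides to any connected component must balance, which is impossible unless $x$ and $y$ share a component. A path from $x$ to $y$ in this graph combined with the strong triangle inequality then yields $\max_i d^{+}(y_i, z_i) \ge d(x, y)$. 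For closedness of $I(X)$ in $V$, I would show the norm is bounded below on $V \setminus I(X)$ by a positive constant depending on $c$, using the same graph analysis to detect ``non-image'' coefficient patterns, so no sequence from $I(X)$ can converge outside $I(X)$.

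For the moreover clause, observe that addition and negation in $V$ are $1$-Lipschitz in the ultra-norm, hence extend uniquely to the metric completion $\hat V$, endowing it with the structure of a complete $S$-valued ultra-normed $R$-module. When $(X, d)$ is complete, $I(X)$ remains closed in $\hat V$: a Cauchy sequence $\{e_{x_n}\}$ in $I(X)$ satisfies $d(x_n, x_m) = \|e_{x_n} - e_{x_m}\| \to 0$, so $x_n \to x \in X$ by completeness, and isometry then gives $e_{x_n} \to e_x \in I(X)$.

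The main obstacle is the $\ge$ direction of the isometry together with $S$-valuedness of $\|\cdot\|$. The graph-connectivity lemma delivers the lower bound in general, but ensuring the infimum in the definition of $\|\cdot\|$ actually lies in $S$ (rather than being merely an infimum of $S$-valued quantities) requires showing the infimum is attained by some finite, extremal representation; this is where the integral-domain hypothesis on $R$ enters essentially, ruling out degenerate cancellation patterns that would otherwise permit decompositions of $e_x - e_y$ with arbitrarily small maximum strictly below $d(x, y)$.
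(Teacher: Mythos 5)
Your construction is genuinely different from the paper's: the paper embeds $X\sqcup\{o\}$ into the Lemin--Lemin function space $\elel(S,\mathrm{F}(R,X,o),o)$ with the sup-type ultrametric $\Delta$ and then takes the submodule generated by the image, whereas you build a free module on $X^{+}$ modulo $R\,e_{\theta}$ with a quotient (Arens--Eells/Kantorovich--Rubinstein-type) ultra-norm. The skeleton of your argument is viable --- the graph/coefficient-sum lemma does give the lower bound $\|e_x-e_y\|\ge d(x,y)$, and $R$-independence and closedness of $I(X)$ can be extracted from the same analysis --- but the writeup has genuine gaps at exactly the points you flag, and the mechanism you propose for fixing the main one is wrong.

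First, the $S$-valuedness. You assert that the infimum is attained by an ``extremal representation'' and that this is ``where the integral-domain hypothesis enters essentially, ruling out degenerate cancellation patterns.'' The integral-domain hypothesis has nothing to do with this; what makes the infimum attained is the ultrametric geometry. Concretely: since $d^{+}$ is an ultrametric, the relation $d^{+}(u,w)\le t$ is an equivalence relation on $X^{+}$, and one checks that $\|v\|\le t$ holds if and only if every $t$-class disjoint from $\theta$ carries total coefficient $0$ in $v$. This condition depends only on the induced partition of the \emph{finite} set $\supp(v)\cup\{\theta\}$, which takes only finitely many values as $t$ varies and is constant for $t$ in an interval $[t_0,t_1)$ just above the infimum $t_0$; hence the condition holds at $t_0$ itself, the infimum is attained, and $\|v\|$ equals a finite maximum of values of $d^{+}$, which lies in $S$. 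Without some such argument the norm is only $\cl(S)$-valued and the theorem is not proved. (In fact, once this is done, your construction needs $R$ to be an integral domain nowhere --- $\|r\cdot v\|\le\|v\|$ holds for any commutative ring since multiplying a representation through by $r$ does not enlarge the max --- which should have been a warning sign that the hypothesis was being invoked for the wrong step. The paper uses integrality only to put a module structure on the completion, via $\Delta(r\cdot x,0)=\Delta(x,0)$.) Second, two smaller but real defects: positive-definiteness of $\|\cdot\|$ does not follow from ``standard manipulations of infima'' --- it needs the same coefficient-sum analysis (a component of the edge graph not containing $\theta$ must have vanishing coefficient sum, which bounds the max below by a positive distance between two support points or by $c$); and your claim that the norm is bounded below by a positive constant on $V\setminus I(X)$ is false --- e.g.\ $e_x+e_y-2e_z$ with $x,y,z$ mutually close has small norm --- what closedness requires is that $\inf_{w\in X}\|v-e_w\|>0$ for $v\notin I(X)$, which again follows from the class-by-class balance condition applied to $v-e_w$ but is a different statement.
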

\begin{rmk}
Let $R$ be an integral domain. 
Let $t_R$ be the trivial valuation 
on $R$ defined by $t_R(x)=1$ 
if $x\neq 0_R$; otherwise $t_R(x)=0$. 
Let $(V, \|*\|)$ be a $R$-module constructed in the proof of Theorem \ref{thm:ultemb}. 
 Then the  ultra-norm $\|*\|$ on $V$ is compatible with $t_R$, 
 i.e., 
for every $r\in R$ and for every $x\in V$, 
we have $\|r\cdot x\|=t_R(r)\|x\|$.  
%%%%%%%%%%%%%%%%%%%%%5
For every finite field, 
there exist no valuations on it except the trivial valuation. 
Thus we can consider that 
Theorem \ref{thm:ultemb} includes the Arens--Eells embedding  theorem 
into normed spaces  over all finite fields. 
The author does not know whether 
such an embedding theorem into normed spaces  
over all non-Archimedean valued fields holds true or not. 
\end{rmk}

\begin{rmk}
In Theorem \ref{thm:ultemb}, 
the assumption that $S$ possesses  at least two elements is necessary. 
Indeed, the set $\{0\}$ is a range set. 
Let $R$ be an integral domain. 
 Let $V$ be the zero $R$-module; namely, $V$ consists of 
only the zero element,  and let $\|*\|$ be  the trivial ultra-norm on $V$. 
Then all $\{0\}$-valued ultra-normed $R$-modules are isometric to 
the $\{0\}$-valued ultrametric space $(V, \|*\|)$. 
Since $V$ has no $R$-independent subset, 
an isometric embedding theorem of Arens--Eells-type does not holds for a range set $\{0\}$. 
\end{rmk}

\begin{rmk}
There are various theorems on  isometric embeddings 
from an ultrametric space
into a metric space with algebraic structures. 
For instance, 
Schikhof \cite{S} constructed 
an isometric embedding 
from an ultrametric space into a non-Archimedean valued field. 
The existence of an isometric embedding 
from an ultrametric space into a Hilbert space was first proven 
by Timan and Vestfrid \cite{TV1} in a separable case, 
and  was proven 
by A. J.  Lemin \cite{L} in a general setting. 
The papers \cite{TV2}, \cite{V94} and \cite{F} also contain related results. 
\end{rmk}

For a range set $S$, and for a topological  space $X$, 
we denote by 
$\ult(X, S)$  (resp.~$\met(X)$)
 the set of all $S$-valued ultrametrics (resp.~metrics) on $X$ that generate 
the same topology as the original one of $X$. 
We denote by $\ult(X)$ the set $\ult(X, [0, \infty))$. 
We say that a topological space $X$ is 
\emph{$S$-valued ultrametrizable} 
(resp.~\emph{ultrametrizable}) if
$\ult(X, S)\neq \emptyset$ (resp.~$\ult(X)\neq \emptyset$). 
We  say that $X$ is \emph{completely $S$-valued ultrametrizable}
(resp.~\emph{completely  ultrametrizable}) if there exists a complete metric $d\in \ult(X, S)$ (resp.~$d\in \ult(X)$). 
\par

We say that a range set $S$ has 
 \emph{countable coinitiality} if 
there exists 
a  strictly decreasing sequence 
$\{r_i\}_{i\in \nn}$ in $S$ with 
$\lim_{i\to \infty}r_{i}=0$.

\begin{rmk}
For a range set $S$  with the countable coinitiality, and for a topological space $X$, 
it is worth clarifying a relation between the  ultrametrizability 
and the $S$-valued ultrametrizability. 
In Proposition \ref{prop:ultequi}, 
we show that these two properties are equivalent to each other. 
\end{rmk}

In 1930, 
Hausdorff \cite{Haus1} proved the extension theorem stating  
that for every metrizable space $X$,  
for every closed subset $A$ of $X$, 
and for every metric $e\in \met(A)$, 
there exists a metric $D\in \met(X)$ with 
$D|_{A^2}=e$. 
\par

By using the Arens--Eells embedding theorem,  
Toru\'nczyk \cite{Tor} provided 
a simple proof of the Hausdorff extension theorem. 
Due to Toru\'nczyk's method and 
 Theorem \ref{thm:ultemb}, we can  
 prove an $S$-valued  ultrametric version  of the Hausdorff extension theorem. 
 
\begin{thm}\label{thm:ultex}
Let $S$ be a range set. 
Let $X$ be an $S$-valued  ultrametrizable space, 
and let $A$ be a closed subset of $X$. 
Then 
For every $e\in \ult(A, S)$, 
there exists  $D\in \ult(X, S)$ with 
$D|_{A^2}=e$. 
Moreover, 
if $X$ is completely metrizable and $e\in \ult(A, S)$ is complete, 
then we can choose $D$ as a complete $S$-valued ultrametric. 
\end{thm}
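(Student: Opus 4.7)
The plan is to mimic Toru\'nczyk's proof of the Hausdorff extension theorem, substituting Theorem~\ref{thm:ultemb} for the Arens--Eells embedding and using the strong zero-dimensionality of ultrametrizable spaces in place of Dugundji's theorem.

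I first fix some $\rho\in\ult(X,S)$ (chosen complete in the completable setting) and apply Theorem~\ref{thm:ultemb} twice: once to $(A,e)$, producing an $S$-valued ultra-normed $R$-module $(V,\|\cdot\|_V)$ with a closed isometric embedding $I\colon A\to V$; once to $(X,\rho)$, producing $(W,\|\cdot\|_W)$ with a closed isometric embedding $J\colon X\to W$. The moreover clause of Theorem~\ref{thm:ultemb} lets me take both $V$ and $W$ complete when the corresponding metrics are complete. Next I construct a continuous retraction $\pi\colon X\to A$ extending $\mathrm{id}_A$. Every maximal $\rho$-open ball contained in $X\setminus A$ is clopen (open balls are also closed in ultrametric spaces), so these balls partition $X\setminus A$; on each cell $B_\alpha$, let $r_\alpha$ be the common value of $\rho(\cdot,A)$ on $B_\alpha$ and select a point $\pi_\alpha\in A$ with $\rho(x,\pi_\alpha)<2r_\alpha$ for every $x\in B_\alpha$. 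Setting $\pi|_{B_\alpha}:=\pi_\alpha$ and $\pi|_A:=\mathrm{id}_A$ gives a map that is locally constant on $X\setminus A$ and satisfies $\rho(x,\pi(x))<2\rho(x,A)\to 0$ as $x\to A$, hence is continuous on all of $X$.

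I then form the direct sum $V\oplus W$ equipped with the ultra-norm $\|(v,w)\|:=\|v\|_V\vee\|w\|_W$ and define
\[
G\colon X\to V\oplus W,\qquad G(x):=\bigl(I(\pi(x)),\,J(x)-J(\pi(x))\bigr),\qquad D(x,y):=\|G(x)-G(y)\|.
\]
By construction $D$ is an $S$-valued ultra-pseudo-metric. It is in fact a metric because $G$ is injective: $G(x)=G(y)$ forces $\pi(x)=\pi(y)$ via the first coordinate, and then $J(x)=J(y)$ via the second. For $x,y\in A$ one has $\pi(x)=x$, $\pi(y)=y$, so the second coordinate difference vanishes and $D(x,y)=\|I(x)-I(y)\|_V=e(x,y)$. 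That $D$ generates the topology of $X$ follows by chasing both coordinates of $G(x_n)-G(x)$ in the two cases $x\in A$ and $x\notin A$. In the complete setting, a $D$-Cauchy sequence produces a Cauchy sequence in the complete space $V\oplus W$ whose limit lies in $G(X)$ by closedness of $I(A)$ and $J(X)$, and hence yields a $D$-limit in $X$.

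The main obstacle is the construction of the continuous retraction $\pi$, which has no Archimedean analogue: Toru\'nczyk's original argument uses Dugundji-type convex combinations in the locally convex target, a device unavailable in our ultra-normed module setting. The strong zero-dimensionality of ultrametric spaces via the partition into $A$-avoiding clopen balls is precisely what allows this obstacle to be bypassed.
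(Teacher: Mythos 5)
Your proof is correct and follows essentially the same route as the paper: Toru\'nczyk's method with Theorem~\ref{thm:ultemb} in place of Arens--Eells, and a retraction $X\to A$ (the paper's Proposition~\ref{prop:ultret} and Corollary~\ref{cor:ultes}) in place of Dugundji's theorem. Your map $G(x)=\bigl(I(\pi(x)),\,J(x)-J(\pi(x))\bigr)$ is exactly what one obtains by unwinding the paper's $H=h\circ k\circ i$ from Lemma~\ref{lem:tor} with the natural choices of the two continuous extensions $\alpha$ and $\beta$, so the only difference is that you construct the retraction and the shear homeomorphism by hand rather than quoting them as separate lemmas.
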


\begin{rmk}
There are several studies on extending a partial or continuous ultrametrics 
(see \cite{El}, \cite{S}, \cite{StT}, or  \cite{TZ}). 
The extension of ultrametrics defined on subsets of sets  to ultrametrics on whole 
sets can be considered as a special case of extension of a weight 
from the edge set of a graph to an ultrametric on the vertex set of this graph.
Some results in this direction  can be found in \cite{DP},  and it is an ultrametric version of the corresponding theorems on metrization of weighted graphs in \cite{DMV}.  
\end{rmk}

In 1928, 
Niemytzki and Tychonoff \cite{NT} proved that a metrizable space $X$ is 
compact if and only if all metrics in  $\met(X)$  are complete. 
Hausdorff \cite{Haus1} gave a simple proof 
of their characterization theorem 
by applying his extension theorem of metrics mentioned above. 
By using Hausdorff's argument and Theorem \ref{thm:ultex}, 
we obtain an ultrametric version  of the Niemytzki--Tychonoff theorem. 
\begin{cor}\label{cor:ultcpt}
Let $S$ be a range set with the countable coinitiality. 
Let $X$ be an $S$-valued ultrametrizable space. 
Then the space $X$ is compact  if and only if 
  for every ultrametric $d\in \ult(X, S)$  is complete. 
\end{cor}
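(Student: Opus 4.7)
The plan is to follow the classical Hausdorff-style argument underlying the Niemytzki--Tychonoff theorem, using Theorem \ref{thm:ultex} in place of the Hausdorff extension of metrics.

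The forward implication is routine, as a compact metrizable space is complete in every compatible metric; I would simply cite this standard fact.

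For the converse, I argue by contrapositive. Assume $X$ is not compact. Since $X$ is metrizable, failure of compactness yields failure of sequential compactness, so there is a sequence $(x_n)_{n\in\nn}$ of pairwise distinct points with no convergent subsequence; hence $A=\{\, x_n\mid n\in\nn\,\}$ is closed and discrete in $X$. Exploiting the countable coinitiality of $S$, I fix a strictly decreasing sequence $(r_n)_{n\in\nn}$ in $S$ with $\lim_{n\to\infty}r_n=0$, and define $e\colon A\times A\to S$ by $e(x_n,x_m)=r_{\min(n,m)}$ for $n\neq m$ and $e(x_n,x_n)=0$. A brief check (resting on the observation that $\min(i,k)\le i$ always, so $r_{\min(i,k)}\ge r_i$) shows that $e$ satisfies the strong triangle inequality; moreover, the open $e$-ball of radius $r_n$ around $x_n$ equals $\{x_n\}$, so $e$ generates the discrete topology on $A$, confirming $e\in\ult(A,S)$. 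Finally, $e(x_n,x_m)=r_{\min(n,m)}\to 0$, so $(x_n)$ is $e$-Cauchy, while discreteness rules out convergence in $(A,e)$. This ultrametric is essentially the Delhomm\'{e}--Laflamme--Pouzet--Sauer construction of Remark \ref{rmk:ps}.

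Applying Theorem \ref{thm:ultex} to the closed subset $A\subseteq X$ and to $e\in\ult(A,S)$, I obtain an extension $D\in\ult(X,S)$ with $D|_{A^2}=e$. The sequence $(x_n)$ is then $D$-Cauchy because $D$ restricts to $e$ on $A$; on the other hand, $(x_n)$ has no limit in $X$, so it cannot converge in $(X,D)$ either. Thus $D$ is a non-complete member of $\ult(X,S)$, completing the contrapositive. The decisive point, and the only real obstacle, is the countable coinitiality of $S$: it is precisely the hypothesis that allows an $S$-valued Cauchy sequence with pairwise distinct terms, so dropping it would collapse the entire strategy.
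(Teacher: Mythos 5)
Your proposal is correct and follows the same route as the paper: the paper's Lemma \ref{lem:ultcb} constructs exactly your ultrametric (since $r$ is strictly decreasing, $r_{\min(n,m)}=r_n\lor r_m$, the Delhomm\'e--Laflamme--Pouzet--Sauer metric), producing a non-complete member of $\ult(M,S)$ on a closed countable discrete subset, and the corollary is then deduced by extending via Theorem \ref{thm:ultex}. Your verification of the strong triangle inequality, of the discreteness of the induced topology, and of the Cauchy-but-divergent behaviour matches the paper's argument.
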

%%%%%%%%%%%MMM
\begin{rmk}
In Corollary \ref{cor:ultcpt},   the assumption that  $S$ has countable  coinitiality is necessary.
 Indeed, let $S=\{0, 1\}$, and let $M$ be an infinite discrete space. 
 Let  $d$ be an   ultrametric  on $M$
such that $d(x, y)=1$ whenever $x\neq y$. 
 Then $\ult(M, \{0, 1\})=\{d\}$. 
 Thus $M$ is not compact, and every member of  $\ult(M, \{0, 1\})$ is complete. 
\end{rmk}
%%%%%%%%%%%%%MM

To state our  future  results, 
for a topological space $X$, 
and for a range set $S$, 
we define a function 
$\mathcal{UD}_X^S: \ult(X, S)^2\to [0, \infty]$
 by assigning $\mathcal{UD}_X^S(d, e)$ to  the infimum of 
 $\epsilon\in S\sqcup \{\infty\}$ such that 
 for all $x, y\in X$ we have 
 \[
 d(x, y)\le e(x, y)\lor \epsilon, 
 \]
 and 
 \[
  e(x, y)\le d(x, y)\lor \epsilon. 
 \]
The function 
$\mathcal{UD}_X^S$
 is an ultrametric on $\ult(X, S)$ valued in $\cl(S)\sqcup \{\infty\}$, 
 where $\cl(S)$ stands for the closure of $S$ in $[0, \infty)$. 
 
 \begin{rmk}
Let  $d, e\in \ult(X, S)$, and let $r\in S_+$. 
Then,  we have  $\mathcal{UD}_{X}^S(d, e)\le r$ if and only if  
  for all $x, y\in X$ satisfying either   $d(x, y)> r$ or  $e(x, y)>r$ we have  $d(x, y)=e(x, y)$. 
   \end{rmk}
 
 We also define a function 
$\mathcal{D}_X: \met(X)\times \met(X)\to [0, \infty]$
 by 
 \[
 \mathcal{D}_X(d, e)=\sup_{(x, y)\in X^2}|d(x, y)-e(x, y)|. 
 \]
The function 
$\mathcal{D}_X$
 is a metric on $\met(X)$ valued in $[0, \infty]$.

\begin{rmk}
Qiu \cite{Q} introduced the 
\emph{strong $\epsilon$-isometry} 
in the study on the  non-Archimedean Gromov--Hausdorff distance 
(see \cite{Z}). 
This concept  is 
an analogue for ultrametric spaces of the $\epsilon$-isometry 
in the study on the ordinary Gromov--Hausdorff distance (see \cite{BBI}). 
Roughly speaking, 
for a range set $S$, 
for  an $S$-valued  ultrametrizable space $X$, 
and for $S$-valued  ultrametrics $d, e\in \ult(X, S)$, 
the inequality  $\mathcal{UD}_X^S(d, e)\le \epsilon$  
is  equivalent to the statement that  
 the identity maps $1_X: (X, d)\to (X, e)$ and $1_X: (X, e)\to (X, d)$  
 are strong $\epsilon$-isometries. 
\end{rmk}

The author \cite{IsI} proved an interpolation theorem of metrics with 
the information of  $\mathcal{D}_X$ (see \cite[Theorem 1.1]{IsI}). 
As its application, 
the author  proved that 
for every non-discrete metrizable space $X$ 
the set of all metrics in $\met(X)$ 
with a transmissible property,  
which is a geometric property determined by finite subsets 
(see Definition \ref{def:transpro}), 
is dense $G_{\delta}$ in the metric space 
$(\met(X), \mathcal{D}_X)$ (see \cite[Theorem 1.2]{IsI}), 
and also proved a local version of it (see \cite[Theorem 1.3]{IsI}). 
\par

By using Theorems \ref{thm:ultemb} and  \ref{thm:ultex}, 
we can prove an ultrametric version of the author's interpolation theorem. 
\par

A family 
$\{H_i\}_{i\in I}$ 
of subsets of a topological space $X$ is said to be 
\emph{discrete} if for every 
$x\in X$
 there exists a neighborhood of $x$ 
 intersecting at most single member of $\{H_i\}_{i\in I}$. 
 For a range set $S$, and for a subset
  $E$ of $S$, 
  we denote by $\sup E$ 
  the supremum of $E$ taken  in $[0, \infty]$, 
  not in $S$. 
For $C\in [1, \infty)$, 
we say that $S$ is \emph{$C$-quasi-complete}
 if for every bounded subset $E$ of $S$, 
 there exists $s\in S$
with $\sup E\le s\le C\cdot\sup E$. 
We say that $S$ is \emph{quasi-complete} 
if $S$ is $C$-quasi-complete for some $C\in [1, \infty)$. 
Note that  a range set is  $1$-quasi-complete  if and only if 
it is closed under the supremum operator. 
\begin{thm}\label{thm:ultint}
Let $C\in [1, \infty)$, and 
let $S$ be a $C$-quasi-complete range set. 
Let $X$ be an $S$-valued %%%%%%MMM
 ultrametrizable space, and let 
$\{A_i\}_{i\in I}$ 
be a discrete family of closed subsets of $X$. 
Then for every $S$-valued  ultrametric  
$d\in \ult(X, S)$,   
and for every 
family 
$\{e_i\}_{i\in I}$ 
of ultrametrics with 
$e_i\in \ult(A_i, S)$ for all $i\in I$, 
 there exists an $S$-valued  ultrametric 
 $m\in \ult(X, S)$ 
 satisfying the following: 
\begin{enumerate}
\item for every $i\in I$ we have $m|_{A_i^2}=e_i$;
\item $\sup_{i\in I}\mathcal{UD}_{A_i}^S(e_{i}, d|_{A_i^2})
\le \mathcal{UD}_X^S(m, d)\le C\cdot \sup_{i\in I}\mathcal{UD}_{A_i}^S(e_{i}, d|_{A_i^2})$. 
\end{enumerate}
Moreover, 
if $X$ is completely metrizable,  
and if each $e_i\in \ult(A_i, S)$ is  complete, 
then we can choose $m\in \ult(X, S)$ as a complete metric.  
\end{thm}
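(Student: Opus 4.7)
The plan is to choose a scale $r \in S$ dictated by the $\mathcal{UD}$-discrepancy, decompose $(X, d)$ into its $d$-balls of radius $r$, extend each $e_i$ inside each ball via Theorem~\ref{thm:ultex}, and glue the results using the ultrametric ball structure. Set $\epsilon_* := \sup_{i \in I} \mathcal{UD}_{A_i}^S(e_i, d|_{A_i^2})$; assume first $\epsilon_* < \infty$ and use $C$-quasi-completeness of $S$ to pick $r \in S$ with $\epsilon_* \le r \le C\epsilon_*$. Because $d$ is an ultrametric, the relation $d(x, y) \le r$ is an equivalence relation on $X$ whose classes form a partition $\mathcal{B}$ of $X$ into clopen $d$-balls of diameter at most $r$.

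Fix $B \in \mathcal{B}$ and set $A(B) := B \cap \bigcup_i A_i = \bigsqcup_i (A_i \cap B)$. Since $\bigcup_i A_i$ is closed in $X$ (a discrete union of closed sets is closed) and the discreteness of $\{A_i\}$ makes each $A_i$ clopen in $\bigcup_i A_i$, the set $A(B)$ is closed in $B$ and each $A_i \cap B$ is clopen in $A(B)$. Moreover, the $\mathcal{UD}$ hypothesis together with $d \le r$ on $B$ forces $e_i \le r$ on $A_i \cap B$: otherwise $e_i(x, y) > r$ would entail $d(x, y) = e_i(x, y) > r$, contradicting $B$'s definition. These observations let us define
\[
e_B(x, y) := \begin{cases} e_i(x, y) & \text{if } x, y \in A_i \cap B, \\ r & \text{if } x \in A_i \cap B,\ y \in A_j \cap B,\ i \ne j, \end{cases}
\]
and a routine check shows $e_B \in \ult(A(B), S \cap [0, r])$. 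Applying Theorem~\ref{thm:ultex} with range set $S \cap [0, r]$ extends $e_B$ to some $m_B \in \ult(B, S \cap [0, r])$.

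Glue by setting $m(x, y) := m_B(x, y)$ if $x, y$ share a class $B \in \mathcal{B}$, and $m(x, y) := d(x, y)$ otherwise. Since each class is clopen in $(X, d)$ and $m_B \le r$ while inter-class $d$-distances exceed $r$, a case analysis over the three classes to which $x, y, z$ belong confirms the strong triangle inequality for $m$, and $m$ induces the topology of $X$. The identity $m|_{A_i^2} = e_i$ holds intra-class by construction and inter-class because $d(x, y) > r \ge \epsilon_*$ together with the $\mathcal{UD}$ hypothesis gives $e_i(x, y) = d(x, y) = m(x, y)$. The inequalities $m \le d \lor r$ and $d \le m \lor r$ yield $\mathcal{UD}_X^S(m, d) \le r \le C\epsilon_*$, while the lower bound $\sup_i \mathcal{UD}_{A_i}^S(e_i, d|_{A_i^2}) \le \mathcal{UD}_X^S(m, d)$ is immediate from monotonicity of $\mathcal{UD}$ under restriction combined with $m|_{A_i^2} = e_i$. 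The degenerate case $\epsilon_* = \infty$ makes the upper bound vacuous and is handled by the same gluing scheme with $X$ treated as a single ``ball'', combined with Theorem~\ref{thm:ultex} applied to a suitably constructed global $e$ on $\bigcup_i A_i$. For the completeness clause, choose each $m_B$ via the complete version of Theorem~\ref{thm:ultex} (each $B$ is clopen, hence inherits complete $S$-ultrametrizability from $X$); any $m$-Cauchy sequence is eventually trapped in a single class, where convergence is delivered by the complete $m_B$. The principal obstacle is verifying that $e_B$ is simultaneously an $S \cap [0, r]$-valued ultrametric, topology-generating, and compatible with each $e_i$: this rests on the ultrametric phenomenon that the $\mathcal{UD}$ hypothesis forces agreement of $e_i$ and $d$ above scale $r$, so that the uniform cross-$A_i$ distance $r$ satisfies every strong triangle inequality without forcing adjustments to the $e_i$.
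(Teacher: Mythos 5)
Your proof is correct, but it takes a genuinely different route from the paper's. The paper follows the template of the author's metric-space interpolation theorem: it attaches a disjoint copy $\coprod_{i}B_i$ of $\coprod_i A_i$ to $X$ at uniform distance $\eta$ (Lemma \ref{lem:key}), embeds the resulting space into a complete $S$-valued ultra-normed $\zz$-module via Theorem \ref{thm:ultemb}, invokes the $0$-dimensional Michael continuous selection theorem to produce a continuous map $F$ agreeing with the copies on $\bigcup_i A_i$ and staying within $\eta$ of the original embedding, and finally pulls back an $\ell^{\infty}$-product metric along $x\mapsto (F(x), x)$. You instead exploit a purely non-Archimedean phenomenon with no metric-space analogue: the relation $d(x,y)\le r$ partitions $X$ into clopen balls, inside each of which all relevant distances (including the $e_i$, by the agreement of $d$ and $e_i$ above scale $r$) are bounded by $r$; you then amalgamate the traces of the $e_i$ at cross-distance $r$ inside each ball, apply Theorem \ref{thm:ultex} with the truncated range set $S\cap[0,r]$ to keep the extension below scale $r$, and glue across balls by $d$ itself, which is automatic because inter-ball distances exceed $r$. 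This avoids the Michael selection theorem and any direct use of Theorem \ref{thm:ultemb} (beyond its presence inside Theorem \ref{thm:ultex}), makes the source of the constant $C$ transparent, and also makes the completeness clause easy (Cauchy sequences are eventually trapped in one clopen ball); the price is that the argument does not transfer to ordinary metrics, which is precisely why the paper's heavier machinery exists. Two small points you share with the paper and should still spell out: producing $r\in S$ with $\sup_i\mathcal{UD}_{A_i}^S(e_i, d|_{A_i^2})\le r\le C\cdot\sup_i\mathcal{UD}_{A_i}^S(e_i, d|_{A_i^2})$ requires applying quasi-completeness to an actual subset of $S$ (e.g.\ the set of values $d(x,y)\lor e_i(x,y)$ over pairs where $d$ and $e_i$ disagree), since the quantities $\mathcal{UD}_{A_i}^S(e_i, d|_{A_i^2})$ need not lie in $S$; and the degenerate case $\sup_i\mathcal{UD}_{A_i}^S(e_i, d|_{A_i^2})=0$ forces $m=d$ and deserves a sentence (the paper's choice $\eta=0$ is equally problematic there).
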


Similarly to \cite{IsI}, 
by using Theorem \ref{thm:ultint}, we prove that 
for every   range set $S$ with the countable coinitiality, 
for every $S$-ultra-singular transmissible parameter $\mathfrak{G}$, and 
for every non-discrete ultrametrizable space $X$,  the set of all metrics $d\in \ult(X, S)$ for which 
$(X, d)$  does not satisfy the $\mathfrak{G}$-transmissible property is 
 a dense $G_{\delta}$ subset of $\ult(X, S)$ (see Theorem \ref{thm:ulttrans}), 
 and prove its local version (see Theorem \ref{thm:ultloc}). 
Note that Theorems \ref{thm:ulttrans} and \ref{thm:ultloc} are ultrametric analogues of Theorems 1.2 and   1.3 in \cite{IsI}, respectively.

For example, 
the doubling property for metric spaces is equivalent to 
a $\mathfrak{G}$-transmissible property for some 
ultra-singular transmissible parameter $\mathfrak{G}$ (see Subsection \ref{subsec:examples}). 
Note that the doubling property for metric spaces is equivalent to the finiteness of the Assouad dimension (see \cite[Chapter 10]{H}). 
Thus, by Theorem \ref{thm:ulttrans}, 
for every  a range set $S$ with the countable coinitiality, 
for every non-discrete ultrametrizable space $X$,
the set of all  metrics  $d\in \ult(X, S)$ for which $(X, d)$ has infinite Assouad dimension  is 
dense  $G_{\delta}$ in $\ult(X, S)$. 
\par

The organization of this paper is as follows: 
In Section \ref{sec:pre}, 
we review the basic or classical  statements 
on $S$-valued  ultrametric spaces and $0$-dimensional spaces. 
We give proofs of some of them. 
In Section \ref{sec:uae}, 
we observe that  a construction  of universal ultrametric spaces  
and isometric embeddings of Lemin--Lemin-type \cite{LL} 
can be applied  to $S$-valued ultrametric spaces for all range set $S$. 
We also discuss algebraic structures 
on  universal ultrametric spaces of Lemin--Lemin-type. 
After that, we prove Theorem \ref{thm:ultemb}. 
In Section \ref{sec:uh}, 
we prove Theorem \ref{thm:ultex} and 
Corollary \ref{cor:ultcpt} 
by following Toru\'nczyk and Hausdorff's methods, 
and by using Theorem \ref{thm:ultemb}. 
In Section \ref{sec:ui}, 
we prove Theorem \ref{thm:ultint} 
by converting  the author's proof of \cite[Theorem 1.1]{IsI} 
into an $S$-valued  ultrametric proof 
with Theorems \ref{thm:ultemb} and \ref{thm:ultex}. 
In Section \ref{sec:ulttrans}, 
we introduce transmissible property, 
originally defined in \cite{IsI}, 
and 
we prove Theorem \ref{thm:ulttrans} as  
an application of Theorem \ref{thm:ultint}.
In Section \ref{sec:ultloc}, 
we first show that  $\ult(X, S)$ is a Baire space for a range set $S$, 
and   
for a second countable locally compact  $X$ 
(see Lemma \ref{lem:ultBaire}). 
We next prove  Theorem \ref{thm:ultloc}, 
which is a local version of Theorem \ref{thm:ulttrans}. 
At the end of this paper, 
for the convenience for the readers, 
we add a table of notion and 
a table of 
symbols. 

%%%%%%%%%%%%%%%%%%%%%%%%%%%%%%%%%%%%%%%%%%%%%%%%%%%%%%%%%%%%%%%%%%%%%%%%%%%%%%%%%%%%%%%%%%%%%%%%%%%%%%%%%%%%%%%%%%%%%%%%%%%%%%%%%%%%%%%%%%%%%%%%%%%%%%%%%%%%%%%%%%%%%%%%%%%%%%%%%%%%%%%%%%%%%%%%%%%%%%%%%%%%%%%%%%%%%%%%%%%%%%%%%%%%%%%%%%%%%%%%%%%%%%%%%%%%%%%%%%%%%%%%%%%%%%%%%

\section{Preliminaries}\label{sec:pre}

In this paper,  
we denote by $\nn$ the set of all positive integers. 
For a set $E$, 
we denote by 
$\card(E)$ 
the cardinality of $E$. 
For a metric space $(X, d)$,  
and for a subset $A$ of $X$, 
we denote by 
$\delta_d(A)$ 
the diameter of $A$. 
We denote by 
$B(x, r)$ (resp.~$U(x, r)$)  
the closed (resp.~open)  ball centered at $x$ with radius $r$. 
We also denote by 
$B(A, r)$ 
the set $\bigcup_{a\in A}B(a,r)$. 
To emphasize metrics under consideration, 
we sometimes denote 
by 
$B(x, r; d)$ (resp.~$U(x, r; d)$) 
the closed (resp.~open) ball in $(X, d)$. 
For a range set $S$, 
we define 
$S_{+}=S\setminus \{0\}$. 

\subsection{Modification of ultrametrics}\label{modi:ult}
A function   $\psi: [0, \infty)\to [0, \infty)$  is said to be 
\emph{amenable} if $\psi^{-1}(\{0\})=\{0\}$. 
 Pongsriiam and  Termwuttipong  \cite{PT} proved the following 
 (see \cite[Theorem 9]{PT}, and  see also \cite[Theorem 2.9]{OD}):

\begin{thm}\label{thm:tp}
Let  $\psi: [0, \infty)\to [0, \infty)$ be a function. 
Then the following statements are equivalent:
\begin{enumerate}
\item  $\psi$ is increasing and amenable.
\item For every set $X$, and for every  ultrametric $d$ on $X$, 
the function $\psi\circ d$ is an  ultrametric on $X$. 
\item For every set $X$ with $\card(X)=3$, and for every  ultrametric $d$ on $X$, 
the function $\psi\circ d$ is an  ultrametric on $X$. 
\end{enumerate}
\end{thm}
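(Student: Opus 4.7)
The plan is to establish the cycle (1) $\Rightarrow$ (2) $\Rightarrow$ (3) $\Rightarrow$ (1). The forward implications are essentially formal; all of the content sits in the last step, where I have to extract both monotonicity and amenability of $\psi$ from the 3-point hypothesis alone.

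For (1) $\Rightarrow$ (2), I would verify the three axioms of an ultrametric for $\psi\circ d$ on an arbitrary ultrametric space $(X,d)$. Positive-definiteness is precisely the amenability condition $\psi^{-1}(\{0\})=\{0\}$; symmetry is inherited from $d$; and for the strong triangle inequality I would combine monotonicity of $\psi$ with the strong triangle inequality for $d$ to write
\[
\psi(d(x,y))\le \psi\bigl(d(x,z)\lor d(z,y)\bigr)=\psi(d(x,z))\lor \psi(d(z,y)),
\]
where the second equality also uses monotonicity. The step (2) $\Rightarrow$ (3) is immediate, since it merely restricts the hypothesis to 3-point sets.

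For (3) $\Rightarrow$ (1), the strategy is to test the hypothesis against an explicit short list of 3-point ultrametrics. To see that $\psi(0)=0$, I would simply recall that $\psi\circ d$ must be a metric and therefore vanish on the diagonal of any 3-point set. To see that $\psi(t)>0$ for each $t>0$, I would equip $X=\{a,b,c\}$ with the equilateral ultrametric $d(a,b)=d(b,c)=d(a,c)=t$; since $\psi\circ d$ is still required to be a metric and $a\neq b$, one gets $\psi(t)=(\psi\circ d)(a,b)>0$. For monotonicity, given $0\le s<t$, I would endow $\{a,b,c\}$ with the isosceles ultrametric $d(a,b)=s$, $d(a,c)=d(b,c)=t$ (which is indeed an ultrametric, as the two larger sides coincide), and then the strong triangle inequality for $\psi\circ d$ applied to the triple $(a,b,c)$ yields $\psi(s)\le \psi(t)\lor \psi(t)=\psi(t)$.

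The main obstacle, such as it is, lies in this last step: one must recognize that the equilateral and isosceles-with-shorter-base configurations are essentially the only 3-point ultrametric shapes, and that the isosceles one is precisely the configuration needed to read monotonicity of $\psi$ off of the strong triangle inequality for $\psi\circ d$. Once the right test spaces are chosen, the verifications are direct applications of the definitions.
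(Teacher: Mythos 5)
Your proposal is essentially correct, but note that the paper does not actually prove this statement: Theorem \ref{thm:tp} is quoted from Pongsriiam--Termwuttipong \cite[Theorem 9]{PT} (with the remark that condition (3) is implicit in their proof), so there is no in-paper argument to compare against. Your cycle $(1)\Rightarrow(2)\Rightarrow(3)\Rightarrow(1)$ is the standard and correct route: the forward direction uses amenability for positive-definiteness and the identity $\psi(a\lor b)=\psi(a)\lor\psi(b)$ for increasing $\psi$, and the reverse direction reads off positivity from the equilateral triangle and monotonicity from the isosceles triangle with short base. The only slip is cosmetic: for $0\le s<t$ with $s=0$, the configuration $d(a,b)=s$, $d(a,c)=d(b,c)=t$ is not a metric (two distinct points at distance $0$), so the isosceles test space only exists for $s>0$; the case $s=0$ must instead be dispatched by the already-established fact $\psi(0)=0\le\psi(t)$. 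With that one-line amendment the argument is complete.
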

\begin{rmk}
The condition $(3)$ in Theorem \ref{thm:tp} does not appear explicitly in the statement of  \cite[Theorem 9]{PT}; 
however, the proof of \cite[Theorem 9]{PT} contains it. 
\end{rmk}
The following lemma is an application  of Pongsriiam and  Termwuttipong's 
result for topologically compatible ultrametrics, 
and it can be considered as an ultrametric analogue of 
\cite[Theorem 3.2]{Cor}.
\begin{lem}\label{lem:ultpreserving}
Let $\psi: [0, \infty)\to [0, \infty)$ be a  function. 
Then the following statements are equivalent:
\begin{enumerate}
\item $\psi$ is   increasing,  amenable, and continuous at the point $0$.
\item For every topological space $X$, and for every $d\in \ult(X)$, 
we have $\psi\circ d\in \ult(X)$.  
\end{enumerate}
\end{lem}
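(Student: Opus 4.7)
The plan is to split the equivalence into its two implications, letting Theorem \ref{thm:tp} handle the purely algebraic content and using a routine neighborhood-base comparison to handle the topological compatibility.

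For $(1)\Rightarrow (2)$, I would fix $d\in \ult(X)$ and immediately invoke Theorem \ref{thm:tp} to conclude that $\psi\circ d$ is an ultrametric on $X$. It will then remain to show that $\psi\circ d$ and $d$ define the same open-ball neighborhood base at every point $x_0\in X$. Given $\epsilon>0$, continuity of $\psi$ at $0$ together with $\psi(0)=0$ yields $\delta>0$ with $U(x_0,\delta;d)\subseteq U(x_0,\epsilon;\psi\circ d)$. Conversely, given $\delta>0$, setting $\epsilon=\psi(\delta/2)$ produces $\epsilon>0$ by amenability, and the monotonicity of $\psi$ then forces $U(x_0,\epsilon;\psi\circ d)\subseteq U(x_0,\delta/2;d)\subseteq U(x_0,\delta;d)$, since $d(x_0,y)\ge \delta/2$ would entail $\psi(d(x_0,y))\ge \psi(\delta/2)=\epsilon$.

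For $(2)\Rightarrow (1)$, I would first apply (2) to any three-point set with the discrete topology, on which every ultrametric is trivially topologically compatible; Theorem \ref{thm:tp} then forces $\psi$ to be increasing and amenable. Continuity of $\psi$ at $0$ I would establish by contradiction: if it fails, pick $\epsilon>0$ and a sequence $t_n\in (0,1/n)$ with $\psi(t_n)\ge \epsilon$. Let $X=\{x_0\}\cup\{x_n\mid n\in \nn\}$ carry the convergent-sequence topology, in which each $x_n$ is isolated and the neighborhoods of $x_0$ are the cofinite sets containing $x_0$. Define $d(x_n,x_0)=t_n$ and $d(x_n,x_m)=t_n\lor t_m$ for $n\ne m$; a direct case check shows $d$ is an ultrametric, and $t_n\to 0$ guarantees that $d$ generates the prescribed topology, so $d\in \ult(X)$. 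However, $(\psi\circ d)(x_n,x_0)=\psi(t_n)\ge \epsilon$ prevents $x_n$ from converging to $x_0$ in the $\psi\circ d$-topology, so $\psi\circ d\notin \ult(X)$, contradicting (2).

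I do not anticipate any significant obstacle: the algebraic content is absorbed by the cited Pongsriiam--Termwuttipong theorem, and the topological content reduces to the behavior of $\psi$ on small values. The only point requiring slight care is the construction in the contradiction step, where the radii $t_n$ must be inherited directly from the witness of failed continuity so that the compatibility of $d$ with the topology on $X$ is automatic.
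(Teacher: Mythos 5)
Your proposal is correct and follows essentially the same route as the paper: Theorem \ref{thm:tp} absorbs the algebraic content, the ball inclusions $U(x,l;d)\subseteq U(x,\epsilon;\psi\circ d)$ and $U(x,\psi(\delta/2);\psi\circ d)\subseteq U(x,\delta;d)$ handle topological compatibility, and a one-point-compactification-of-a-discrete-space example forces continuity of $\psi$ at $0$. The only difference is that you phrase the last step as a proof by contradiction with a specific witnessing sequence $t_n$, whereas the paper argues directly with an arbitrary strictly decreasing null sequence; this is cosmetic.
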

\begin{proof}
We first prove the implication  $(1)\To (2)$. 
Take $d\in \ult(X)$. 
By Theorem \ref{thm:tp}, 
we see that $\psi\circ d$ is an ultrametric on $X$. 
We now prove that $\psi\circ d$ induces the same topology on $X$. 
Take 
$x\in X$ 
and 
$r\in (0, \infty)$.
Since $\psi$ is continuous at $0$, 
there exists $l\in (0, \infty)$ with $\psi(l)<r$. 
Then,  we have 
$U(x, l; d)\subseteq U(x, r; \psi\circ d)$. 
Since $\psi$ is increasing, 
we have 
$U(x, \psi(r)/2;\psi\circ d)\subseteq U(x, r; d)$. 
Since $x\in X$ and $r\in (0, \infty)$ are arbitrary,   we conclude  that 
 $\psi\circ d\in \ult(X)$. 

We next prove the implication  $(2)\To (1)$. 
 By the equivalence  of  the conditions $(1)$ and $(3)$ in Theorem \ref{thm:tp}, 
we see that $\psi$ is increasing and amenable. 
In order to show that $\psi$ is  continuous at $0$, 
take an arbitrary strictly decreasing sequence $\{r(n)\}_{n\in \nn}$ with 
$\lim_{n\to \infty}r(n)=0$. Put $r(\infty)=0$, and 
put  $X=\nn\sqcup \{\infty\}$, and 
define an ultrametric  $d$ on $X$ by 
\[
d(n, m)=
\begin{cases}
 r(n)\lor r(m) & \text{if $n\neq m$;}\\
 0 & \text{if $n=m$}. 
\end{cases}
\]
We now consider that $X$ is equipped with the topology induced from $d$. 
Note that $X$ is homeomorphic to the one-point compactification of 
the countable discrete space. 
By the condition (2),  we have  $\psi\circ d\in \ult(X)$. 
Since the point $\infty\in X$ is the unique accumulation point of  $X$,  and since $X$ is compact, 
for every $\epsilon\in (0, \infty)$
the set $X\setminus U(\infty, \epsilon; \psi\circ d)$ is finite. 
Then, by $d(\infty, m)=r(m)$ for all $m\in \nn$, 
we conclude that  for all sufficiently large $n\in \nn$
we have $\psi(r(n))<\epsilon$. This implies that $\psi$ is continuous at $0$. 
\end{proof}

\begin{lem}\label{lem:ultland}
Let $S$ be a range set. 
Let $(X, d)$ be an $S$-valued ultrametric space.
Let $\epsilon\in S_+$. 
Then the function 
$e: X^2\to [0, \infty)$ 
defined by 
$e(x, y)=\min\{d(x, y), \epsilon\}$ 
belongs to $\ult(X, S)$. 
\end{lem}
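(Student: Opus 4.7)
The plan is to apply Lemma \ref{lem:ultpreserving} to the truncation function and then verify $S$-valuedness directly by a case split.

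First I would define $\psi:[0,\infty)\to[0,\infty)$ by $\psi(t)=\min\{t,\epsilon\}$, so that $e=\psi\circ d$. Then I would check the three conditions in statement $(1)$ of Lemma \ref{lem:ultpreserving}: the function $\psi$ is clearly increasing, it is amenable because $\epsilon>0$ forces $\psi^{-1}(\{0\})=\{0\}$, and it is continuous at $0$ since $\psi(t)=t$ for all $t\le\epsilon$. Invoking the implication $(1)\To(2)$ of Lemma \ref{lem:ultpreserving}, I conclude that $e=\psi\circ d\in\ult(X)$, which already gives that $e$ is a topologically compatible ultrametric on $X$.

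It remains to show $e(X^2)\subseteq S$, so that in fact $e\in\ult(X,S)$. This is the only substantive part, and it is a simple case analysis: for $x,y\in X$, if $d(x,y)\le\epsilon$ then $e(x,y)=d(x,y)\in d(X^2)\subseteq S$; if on the other hand $d(x,y)>\epsilon$ then $e(x,y)=\epsilon\in S_+\subseteq S$. Either way, $e(x,y)\in S$.

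There is no real obstacle here. The only point that could conceivably need care is confirming that the modified function still generates the original topology on $X$, but this is exactly the content of Lemma \ref{lem:ultpreserving} applied to $\psi$, so everything reduces to verifying the hypotheses on $\psi$, which are immediate.
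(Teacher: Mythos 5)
Your proposal is correct and follows essentially the same route as the paper, which likewise obtains the lemma by applying Lemma \ref{lem:ultpreserving} to $\psi(t)=\min\{t,\epsilon\}$. Your explicit case analysis for $e(X^2)\subseteq S$ simply spells out a step the paper leaves implicit.
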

\begin{proof}
Applying Lemma \ref{lem:ultpreserving} to the map $\psi: [0, \infty)\to [0, \infty)$
 defined by $\psi(x)=\min\{x, \epsilon\}$, 
 we obtain the lemma. 
\end{proof}
Let $(X, d)$ and $(Y, e)$  be metric spaces. 
Define a function 
$d\times_{\infty} e$ 
on 
$(X\times Y)^2$  
by 
\[
(d\times_{\infty} e)((x, y), (z, w))=d(x, z)\lor e(y, w). 
\]
It is well-known that 
$d\times_{\infty} e$ 
is a metric on 
$X\times Y$, 
 and   it generates the product topology of 
 $X\times Y$. 
In the case of ultrametrics,  
we have: 
\begin{lem}
Let $S$ be a range set. 
Let $(X, d)$ and $(Y, e)$  be 
$S$-valued ultrametric spaces. 
Then  the metric 
$d\times_{\infty}e$ 
belongs to 
$ \ult(X\times Y, S)$. 
\end{lem}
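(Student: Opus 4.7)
The plan is straightforward since the statement just before the lemma already supplies most of what is needed: it is well known that $d\times_{\infty}e$ is a metric on $X\times Y$ generating the product topology. So the only remaining content of the lemma is to check (a) the strong triangle inequality and (b) that $d\times_{\infty}e$ takes values in $S$.

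For (a), I would verify the strong triangle inequality by a direct calculation using the strong triangle inequalities for $d$ and $e$ together with the commutativity and associativity of $\lor$. Given three points $(x_1,y_1),(x_2,y_2),(x_3,y_3)\in X\times Y$, the strong triangle inequalities for $d$ and $e$ yield
\[
d(x_1,x_3)\le d(x_1,x_2)\lor d(x_2,x_3),\qquad e(y_1,y_3)\le e(y_1,y_2)\lor e(y_2,y_3),
\]
and taking the $\lor$ of these two estimates and then rearranging the four terms gives
\[
d(x_1,x_3)\lor e(y_1,y_3)\le \bigl(d(x_1,x_2)\lor e(y_1,y_2)\bigr)\lor \bigl(d(x_2,x_3)\lor e(y_2,y_3)\bigr),
\]
which is exactly the strong triangle inequality for $d\times_{\infty}e$.

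For (b), observe that for any $a,b\in S$ the value $a\lor b$ equals either $a$ or $b$ and hence belongs to $S$; applying this to $a=d(x,z)\in S$ and $b=e(y,w)\in S$ shows $(d\times_{\infty}e)((x,y),(z,w))\in S$.

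There is no real obstacle here: the proof is a routine verification, and if desired one can additionally note that $B((x,y),r;d\times_{\infty}e)=B(x,r;d)\times B(y,r;e)$ to confirm directly that the topology induced by $d\times_{\infty}e$ coincides with the product topology on $X\times Y$.
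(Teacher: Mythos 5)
Your proof is correct and is exactly the routine verification the paper intends: the paper states this lemma without proof, treating the strong triangle inequality for $d\times_{\infty}e$, the fact that $a\lor b\in\{a,b\}\subseteq S$, and the agreement with the product topology as immediate. Nothing is missing.
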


For a mutually disjoint family 
$\{T_i\}_{i\in I}$ 
of  topological spaces, 
we consider that  the set 
$\coprod_{i\in I}T_i$ 
is equipped with the direct sum topology.
The following proposition is known as an amalgamation of ultrametrics.  
\begin{prop}\label{prop:ultamal2}
Let $S$ be a range set. 
Let 
$(X, d_X)$ 
and 
$(Y, d_Y)$ be 
$S$-valued   ultrametric spaces. 
If 
$X\cap Y=\emptyset$, 
then for every 
$r\in S_{+}$ 
there exists an $S$-valued  ultrametric 
$h\in \ult(X\sqcup Y, S)$ 
such that 
\begin{enumerate}
	\item $h|_{X^2}=d_X$; 
	\item $h|_{Y^2}=d_Y$; 
	\item for all $x\in X$ and $y\in Y$ we have $r\le h(x, y)$. 
\end{enumerate}
\end{prop}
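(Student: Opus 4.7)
The plan is to give an explicit formula for $h$ using fixed basepoints in each summand. Choose $x_0\in X$ and $y_0\in Y$ (if either factor is empty the statement is vacuous). Define $h\colon(X\sqcup Y)^2\to[0,\infty)$ by setting $h|_{X^2}=d_X$, $h|_{Y^2}=d_Y$, and
\[
h(x,y)=h(y,x)=r\lor d_X(x,x_0)\lor d_Y(y,y_0)\qquad\text{for } x\in X,\ y\in Y.
\]
Then $h$ is symmetric and positive definite, and properties (1)--(3) hold by construction. Since the maximum of two elements of $S$ coincides with one of them, every value of $h$ lies in $S$, so $h$ is $S$-valued.

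The main verification is the strong triangle inequality $h(a,c)\le h(a,b)\lor h(b,c)$, carried out by a case analysis on the distribution of $a,b,c$ between $X$ and $Y$. If all three lie in one summand, the inequality is just the strong triangle inequality for $d_X$ or $d_Y$. In a mixed case with $a,c\in X$ and $b\in Y$, we have
\[
h(a,c)=d_X(a,c)\le d_X(a,x_0)\lor d_X(x_0,c)\le h(a,b)\lor h(b,c),
\]
while in a mixed case with $a,b\in X$ and $c\in Y$, the right-hand side equals $d_X(a,b)\lor r\lor d_X(b,x_0)\lor d_Y(c,y_0)$, which clearly dominates $r$ and $d_Y(c,y_0)$ and, using $d_X(a,x_0)\le d_X(a,b)\lor d_X(b,x_0)$ from the strong triangle inequality in $(X,d_X)$, also dominates $d_X(a,x_0)$; hence it dominates $h(a,c)$. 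The cases with the roles of $X$ and $Y$ interchanged are entirely symmetric.

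Finally, I would check that $h$ generates the direct sum topology on $X\sqcup Y$. Since $h(x,y)\ge r>0$ whenever $x\in X$ and $y\in Y$, each $h$-open ball of radius at most $r$ centered at a point of $X$ (respectively $Y$) is contained in $X$ (respectively $Y$), so $X$ and $Y$ are clopen in $(X\sqcup Y,h)$; because $h|_{X^2}=d_X$ and $h|_{Y^2}=d_Y$, the induced subspace topologies agree with those of $d_X$ and $d_Y$, and therefore $h\in\ult(X\sqcup Y,S)$. No step looks genuinely hard; the only mildly technical point is the mixed-case strong triangle inequalities, which reduce mechanically to the strong triangle inequalities in $(X,d_X)$ and $(Y,d_Y)$.
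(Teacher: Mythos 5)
Your construction $h(x,y)=r\lor d_X(x,x_0)\lor d_Y(y,y_0)$ is correct, and all the verifications (the case analysis for the strong triangle inequality, $S$-valuedness via the fact that a maximum of elements of $S$ lies in $S$, and the clopen-ness of $X$ and $Y$ giving the direct sum topology) go through. This is essentially the proof the paper intends: it only cites the argument of \cite[Proposition 3.3]{IsI} with $+$ replaced by $\lor$, and your formula is exactly the $\lor$-analogue of the standard basepoint amalgamation used there.
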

The proof of Proposition \ref{prop:ultamal2} can be obtained by replacing the symbol ``$+$" with the symbol $``\lor$" 
in the proof of Proposition 3.3 in  \cite{IsI} (or see Theorem 2.2 in \cite{Bog}). 

As a consequence of Proposition \ref{prop:ultamal2}, 
we can construct a one-point extension of an ultrametric space. 

\begin{cor}\label{cor:ultonept}
Let $S$ be a range set possessing at least two elements. 
Let $(X, d)$ be an $S$-valued ultrametric space, and let $o\not\in X$. 
Then there exists an $S$-valued ultrametric 
$D\in \ult(X\sqcup \{o\}, S)$ with 
$D|_{X^2}=d$. 
\end{cor}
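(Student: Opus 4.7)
The plan is to apply Proposition \ref{prop:ultamal2} directly with $Y=\{o\}$ playing the role of the second ultrametric space. First I would observe that since $S$ is a range set containing at least two elements and $0\in S$, there must exist some $r\in S_+$. Second, I would equip the singleton $\{o\}$ with the trivial ultrametric $d_Y\equiv 0$, which is automatically $S$-valued because $0\in S$, and which generates the (unique) topology on $\{o\}$.

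Next, since $X$ and $\{o\}$ are disjoint, Proposition \ref{prop:ultamal2} applied to $(X,d)$ and $(\{o\},d_Y)$ with the chosen $r\in S_+$ yields an $S$-valued ultrametric $D:=h\in\ult(X\sqcup\{o\},S)$ satisfying $D|_{X^2}=d$, $D|_{\{o\}^2}=d_Y$, and $D(x,o)\ge r$ for every $x\in X$. The first of these is precisely the conclusion required by the corollary.

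The only subtle point worth checking is topological compatibility with the direct sum topology on $X\sqcup\{o\}$: here the conclusion of Proposition \ref{prop:ultamal2} already states membership in $\ult(X\sqcup\{o\},S)$, so this is handled, and one can see this concretely from the fact that $r>0$ forces $o$ to be an isolated point of $(X\sqcup\{o\},D)$, while $D$ restricted to $X$ is $d$, which already generates the given topology on $X$.

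There is no real obstacle beyond invoking the previous proposition; the subtlety worth flagging is merely the hypothesis $\card(S)\ge 2$, which is exactly what guarantees $S_+\neq\emptyset$ and hence what allows Proposition \ref{prop:ultamal2} to supply a valid positive separation parameter $r$. Without this hypothesis one could not ensure that the distance from $o$ to points of $X$ is positive, and thus could not produce a genuine ultrametric extension.
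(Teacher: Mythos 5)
Your proposal is correct and is exactly the argument the paper intends: the corollary is stated as an immediate consequence of Proposition \ref{prop:ultamal2}, obtained by taking $Y=\{o\}$ with the zero ultrametric and choosing any $r\in S_+$, which exists precisely because $S$ has at least two elements. Your additional remarks on topological compatibility and on the role of the hypothesis $\card(S)\ge 2$ are accurate but not a departure from the paper's route.
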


\subsection{Invariant metrics on modules}\label{subsec:inv}

Let $R$ be a commutative ring, 
and let 
 $V$ be an $R$-module. 
We say that a metric  
$d$ 
on 
$V$ is \emph{invariant}, 
or 
\emph{invariant under the addition} if 
for all 
$a, x, y\in V$ 
we have 
\[
d(x+a, y+a)=d(x, y).
\] 
By the definitions of  ultra-norms and  invariant metrics, 
we obtain:
\begin{lem}\label{lem:ultinvariant}
Let $R$ be a commutative ring
and let $V$ be an $R$-module with the zero element $0_V$. 
If 
$\|*\|$ is an ultra-normed  on $V$,  
then the metric $d$ on $V$ defined by 
$d(x, y)=\|x-y\|$ 
is an invariant ultrametric on 
$V$. 
Conversely, 
if 
$d$ is  
an invariant ultrametric  on $V$, 
then the function 
$\|*\|: V\to [0, \infty)$ 
defined by 
$\|x\|=d(x, 0_V)$
is an ultra-norm on $V$. 
\end{lem}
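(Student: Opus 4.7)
The plan is to verify the two implications by a direct axiom-by-axiom translation, exactly as in the classical (Archimedean) correspondence between norms and translation-invariant metrics on an abelian group, but with the ordinary triangle inequality replaced by the strong one.

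For the forward implication, suppose $\|*\|$ is an ultra-norm on $V$ and set $d(x,y)=\|x-y\|$. I would verify the three metric axioms together with invariance. Definiteness $d(x,y)=0\iff x=y$ is immediate from axiom (1) of an ultra-norm applied to $x-y$. Symmetry follows from axiom (2): $d(y,x)=\|y-x\|=\|-(x-y)\|=\|x-y\|=d(x,y)$. The strong triangle inequality uses axiom (3) together with the decomposition $x-y=(x-z)+(z-y)$, giving $d(x,y)\le \|x-z\|\lor \|z-y\|=d(x,z)\lor d(z,y)$. Finally, invariance is transparent: $d(x+a,y+a)=\|(x+a)-(y+a)\|=\|x-y\|=d(x,y)$.

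For the converse, suppose $d$ is an invariant ultrametric on $V$ and set $\|x\|=d(x,0_V)$. Axiom (1) of an ultra-norm follows from the definiteness of $d$. Axiom (2) uses invariance: translating by $x$ gives $\|-x\|=d(-x,0_V)=d(0_V,x)=d(x,0_V)=\|x\|$. For axiom (3), translate by $-y$ to write $\|x+y\|=d(x+y,0_V)=d(x,-y)$; then the strong triangle inequality at the intermediate point $0_V$ yields
\[
d(x,-y)\le d(x,0_V)\lor d(0_V,-y)=\|x\|\lor \|-y\|=\|x\|\lor \|y\|,
\]
using axiom (2) already established.

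There is no real obstacle here, so the main care point is only bookkeeping: making sure that in the converse direction axiom (2) is established \emph{before} it is used to conclude axiom (3), and that invariance is invoked at the right step to convert $d(x+y,0_V)$ into $d(x,-y)$. Both directions are essentially one-line verifications per axiom, so the proof should be written compactly as such.
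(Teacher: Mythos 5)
Your verification is correct and is exactly the routine axiom-by-axiom check the paper has in mind: the paper states this lemma without proof, introducing it with ``By the definitions of ultra-norms and invariant metrics, we obtain,'' and your argument (including the careful ordering in the converse, establishing $\|-x\|=\|x\|$ via invariance before using it, and the translation $d(x+y,0_V)=d(x,-y)$) is the intended immediate verification.
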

Based on Lemma \ref{lem:ultinvariant}, 
in what follows, 
we will identify an ultra-normed $R$-module $(V, \|*\|)$ with  a pair $(V, d)$ of an $R$-module 
and an invariant ultrametric $d$ on $V$. 
When we declare that  $(V, d)$ is an ultra-normed $R$-module,  
the symbol $d$ will be an invariant metric on $V$ constructed in Lemma \ref{lem:ultinvariant}.

By the definition of  ultra-norms, we obtain:
\begin{lem}\label{lem:addconti}
Let $R$ be a commutative ring, 
$(V, d)$ be an ultra-normed   $R$-module. 
Then the addition 
$+:V\times V\to V$ 
and the 
inversion 
$m:V\to V$ 
defined by 
$m(x)=-x$ 
are continuous with respect to the topology induced from 
$d$. 
\end{lem}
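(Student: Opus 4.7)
The plan is to verify continuity directly from the defining axioms of the ultra-norm together with invariance, without needing any $\epsilon$-$\delta$ gymnastics. Since $(V,d)$ is identified (via Lemma \ref{lem:ultinvariant}) with an $R$-module carrying an invariant ultrametric, every translate preserves distances, and this is what will make both maps behave well.

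For the inversion $m(x)=-x$, I would observe that
\[
d(m(x),m(y)) = \|{-x}-({-y})\| = \|{-(x-y)}\| = \|x-y\| = d(x,y),
\]
using axiom (2) of an ultra-norm. Thus $m$ is an isometry from $(V,d)$ to itself, and in particular it is continuous. This is the easy half.

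For the addition $+:V\times V\to V$, I would equip $V\times V$ with the product ultrametric $d\times_{\infty}d$, which by the lemma preceding Proposition \ref{prop:ultamal2} generates the product topology. Then for any $(x,y),(x',y')\in V\times V$, using invariance and the strong triangle inequality,
\[
d(x+y,\,x'+y') = \|(x-x')+(y-y')\| \le \|x-x'\|\lor\|y-y'\| = (d\times_\infty d)\bigl((x,y),(x',y')\bigr).
\]
Hence $+$ is $1$-Lipschitz, so it is continuous.

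There is no real obstacle here; the only thing one must be careful about is invoking the correct metric on $V\times V$ that realizes the product topology, which is precisely the role of the product-ultrametric lemma stated just above. Everything else reduces to a single application of each of the three ultra-norm axioms.
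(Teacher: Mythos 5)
Your proof is correct, and it is precisely the routine verification the paper has in mind: the paper states this lemma without proof, remarking only that it follows from the definition of ultra-norms, and your two computations (inversion is an isometry via axiom (2); addition is $1$-Lipschitz for $d\times_\infty d$ via invariance and the strong triangle inequality) are exactly the omitted details.
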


The next lemma is utilized in the proof of Theorem \ref{thm:ultemb}. 
\begin{lem}\label{lem:compmod}
Let $R$ be a commutative ring, 
and 
let 
$(V, d)$ 
be an ultra-normed 
$R$-module with the zero element $0_V$.  
If for every non-zero 
$r\in R$ 
and for every 
$v\in V$ 
we have 
$d(r\cdot v, 0_V)=d(v, 0_V)$, 
then there exists an ultra-normed $R$-module 
$(W, D)$
 which contains 
$V$ 
as an 
$R$-submodule 
such
that 
$d=D|_{V^2}$, and the metric space $(W, D)$ is complete, 
and $V$ is a dense subset of $(W, D)$. 
\end{lem}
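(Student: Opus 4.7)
The plan is to perform the standard metric completion of $V$ and transport the $R$-module structure through it, using the isometry hypothesis on scalar multiplication to make the module operations well-defined on equivalence classes.

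First, I would let $\mathcal{C}$ denote the set of Cauchy sequences in $(V,d)$, and define an equivalence relation on $\mathcal{C}$ by declaring $\{x_n\}\sim \{y_n\}$ when $\lim_{n\to\infty} d(x_n, y_n)=0$. Set $W=\mathcal{C}/{\sim}$, and write $[x_n]$ for the class of $\{x_n\}$. For distance, I would put $D([x_n],[y_n])=\lim_{n\to\infty} d(x_n,y_n)$; the limit exists because the strong triangle inequality gives $|d(x_n,y_n)-d(x_m,y_m)|\le d(x_n,x_m)\lor d(y_n,y_m)$, and it is independent of representatives by the same estimate. The ultrametric inequality and the fact that $D([x_n],[y_n])=0$ iff the two classes agree are then inherited from $d$ by passage to the limit.

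Next I would define $[x_n]+[y_n]=[x_n+y_n]$ and $r\cdot [x_n]=[r\cdot x_n]$. Addition is clearly well-defined: since $d$ is invariant, $d(x_n+y_n,x_m+y_m)\le d(x_n,x_m)\lor d(y_n,y_m)$, so the sum of two Cauchy sequences is Cauchy, and replacing a sequence by an equivalent one only changes the result by a null sequence. For scalar multiplication, the main obstacle is precisely what the hypothesis is there to address: if $r\neq 0_R$, then $d(r\cdot x_n, r\cdot x_m)=d(r\cdot(x_n-x_m),0_V)=d(x_n-x_m,0_V)=d(x_n,x_m)$ by invariance and the isometry hypothesis, so $\{r\cdot x_n\}$ is Cauchy and null sequences map to null sequences; the case $r=0_R$ is trivial. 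This also yields $D(r\cdot [x_n],0_W)=D([x_n],0_W)$ for $r\neq 0_R$. The $R$-module axioms transfer from $V$ to $W$ since they hold termwise. The invariance of $D$ under addition follows term-by-term from the invariance of $d$, so via Lemma \ref{lem:ultinvariant} the pair $(W,D)$ is an ultra-normed $R$-module.

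Finally, I would embed $V$ into $W$ by sending $v\in V$ to the class of the constant sequence $(v,v,\ldots)$; this is an $R$-module monomorphism and an isometry, so I identify $V$ with its image and obtain $D|_{V^2}=d$. Density is immediate: given $[x_n]\in W$ with representative $\{x_n\}$, the elements $x_k\in V$ satisfy $D(x_k,[x_n])=\lim_{n\to\infty} d(x_k,x_n)$, which tends to $0$ as $k\to\infty$ because $\{x_n\}$ is Cauchy. Completeness is the usual diagonal argument: given a Cauchy sequence $\{\xi_k\}_{k\in\nn}$ in $W$, I pick $v_k\in V$ with $D(\xi_k,v_k)<2^{-k}$ (possible by density), observe that $\{v_k\}$ is then Cauchy in $V$, and check that $\xi_k\to [v_k]$ in $W$ by combining the ultrametric inequality with the two approximations. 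This finishes the construction, and the only nontrivial input beyond standard completion is the role played by the isometry hypothesis on scalar multiplication noted above.
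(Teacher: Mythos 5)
Your proposal is correct and takes essentially the same route as the paper: both form the metric completion of $(V,d)$ and extend the addition and scalar multiplication by continuity (termwise on Cauchy sequences), with the isometry hypothesis on scalar multiplication being exactly what makes $r\cdot[x_n]=[r\cdot x_n]$ well-defined. Your version merely makes the completion construction explicit and spells out the density and completeness checks that the paper leaves implicit.
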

\begin{proof}
Let $(W, D)$ be the 
completion of the metric space $(V, d)$. 
We introduce an 
$R$-module 
structure into $(W, D)$. 
For all 
$x, y\in W$, 
take sequences 
$\{x_n\}_{n\in \nn}$
 and 
 $\{y_n\}_{n\in \nn}$ 
 in 
 $V$ 
 such that 
 $x_n\to x$ and $y_n\to y$ 
 as 
 $n\to \infty$. 
 Then we define an  addition on $W$ by 
 \[
 x+y=\lim_{n\to \infty}(x_n+y_n). 
 \]
 Since $d$ is an ultra-norm, this addition is well-defined. 
 \par 
 
For every $r\in R$, we define a scalar multiplication on $W$ by 
\[
r\cdot x=\lim_{n\to \infty}r\cdot x_n. 
\]
 By the assumption on the scalar multiplication on 
 $V$ and  the ultra-norm, 
 the scalar multiplication on $W$ 
 is well-defined. 
By these definitions, 
$(W, D)$ 
becomes an ultra-normed 
$R$-module which contains 
$V$ as an $R$-submodule. 
This finishes the proof. 
 \end{proof}

\begin{rmk}
A metric space $(Y, e)$ is a completion of a metric space $(X, d)$
if and only if $(Y, e)$ is complete and $(X, d)$ is isometric to 
a dense subspace of $(Y, e)$. Note that for every metric space 
$(X, d)$ a completion of $(X, d)$ is unique up to isometry. 
Thus, Lemma \ref{lem:compmod} claims 
the existence of some special $R$-module structure on the  completion of the ultra-normed $R$-module 
$(V, d)$. 
\end{rmk}

\subsection{Basic properties of ultrametric spaces}\label{subsec:basic}

The  next lemma follows from the strong triangle inequality. 
\begin{lem}\label{lem:isosceles}
Let 
$X$ 
be a set,  
and let 
$w: X^2\to \rr$ 
be a symmetric map. 
Then 
$w$ 
satisfies the strong triangle inequality if and only if 
for all 
$x, y, z\in X$ 
the inequality 
$w(x, z)<w(y, z)$ 
implies 
$w(y, z)=w(x, y)$. 
\end{lem}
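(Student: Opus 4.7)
The plan is to handle the two implications separately, using only the symmetry of $w$ and elementary manipulations of the max operator. Since the statement is a well-known characterization (often phrased as ``in an ultrametric space every triangle is isosceles with the two largest sides equal''), I do not anticipate a genuine obstacle; the only subtlety is keeping track of which vertex plays which role when applying the hypotheses.

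For the forward direction, assume $w$ satisfies the strong triangle inequality and suppose $w(x,z)<w(y,z)$. First I would apply the strong triangle inequality in the form $w(y,z)\le w(y,x)\lor w(x,z)$; since the right-hand side must be at least $w(y,z)>w(x,z)$, the maximum must be attained by $w(y,x)$, giving $w(x,y)\ge w(y,z)$. Then I would apply the strong triangle inequality again in the form $w(x,y)\le w(x,z)\lor w(z,y)$; using $w(x,z)<w(y,z)$ this forces $w(x,y)\le w(y,z)$. Combining the two bounds yields $w(x,y)=w(y,z)$.

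For the backward direction, I would argue by contradiction: assume the isosceles condition and suppose some triple $x,y,z$ violates the strong triangle inequality, so that $w(x,y)>w(x,z)\lor w(z,y)$. In particular $w(z,x)<w(y,x)$, so by the isosceles hypothesis (applied to the triple with ``common vertex'' $x$) I get $w(y,x)=w(z,y)$. But then $w(x,y)=w(y,z)$, which contradicts $w(x,y)>w(z,y)$. Hence no such triple exists, and $w$ satisfies the strong triangle inequality.

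The only point requiring care is the bookkeeping of the variables in the isosceles condition, since the statement is given with a fixed placement of $x,y,z$ and one has to rename variables correctly when applying it; otherwise both directions are one-line verifications. I would therefore present the proof compactly, emphasizing the two chained applications of the strong triangle inequality in the forward direction and the single renaming of variables in the backward direction.
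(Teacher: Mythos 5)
Your proof is correct, and since the paper gives no argument for this lemma (it is stated as an immediate consequence of the strong triangle inequality), your two-implication verification is exactly the standard argument one would supply. Both directions check out, including the variable renaming in the backward direction, where $w(z,x)<w(y,x)$ yields $w(y,x)=w(z,y)$ and contradicts $w(x,y)>w(z,y)$.
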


By this lemma, 
we see that  in an ultrametric space, 
every triangle is  isosceles,  
and the side-length of the legs 
of  the isosceles triangle  
is equal to or greater than the side-length  of base. 
\par

The next proposition follows 
from the strong triangle inequality (see (12) in \cite[Theorem 1.6]{CS}). 
\begin{prop}\label{prop:ultcompletion}
Let $S$ be a range set,  
and let 
$(X, d)$ 
be 
an $S$-valued ultrametric space. 
Then the completion of 
$(X, d)$ 
is an 
$S$-valued ultrametric space. 
\end{prop}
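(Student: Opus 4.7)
The plan is to take the standard completion $(\widehat{X}, \widehat{d})$ (built from equivalence classes of Cauchy sequences, with $\widehat{d}([\{x_n\}], [\{y_n\}]) = \lim_{n\to\infty} d(x_n, y_n)$) and verify the two things that need verification: that $\widehat{d}$ is an ultrametric, and that $\widehat{d}$ takes values in $S$. The first is routine: the strong triangle inequality $d(x_n,y_n)\le d(x_n,z_n)\lor d(z_n,y_n)$ passes to the limit, so $\widehat{d}$ satisfies (\ref{al:ult}).

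The real content is the second point, and the hinge is Lemma \ref{lem:isosceles}. I would argue that for Cauchy representatives $\{x_n\}, \{y_n\}$ of distinct points in $\widehat{X}$, the numerical sequence $d(x_n, y_n)$ is not merely convergent but \emph{eventually constant}, with its eventual value lying in $d(X^2)\subseteq S$. More precisely, let $r=\widehat{d}([\{x_n\}],[\{y_n\}])>0$ and choose $N$ so large that for all $n,m\ge N$ one has $d(x_n,x_m)<r/2$, $d(y_n,y_m)<r/2$, and $d(x_n,y_n)>r/2$. Fix such $n,m\ge N$. Because $d(x_n,x_m)<r/2<d(x_m,y_m)$, Lemma \ref{lem:isosceles} forces $d(x_n,y_m)=d(x_m,y_m)$; similarly $d(y_n,y_m)<d(x_n,y_m)$ gives $d(x_n,y_n)=d(x_n,y_m)$. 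Combining these, $d(x_n,y_n)=d(x_m,y_m)$ for all $n,m\ge N$.

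Consequently $\widehat{d}([\{x_n\}],[\{y_n\}])=d(x_N,y_N)\in d(X^2)\subseteq S$. For the case of equal points the value is $0\in S$. Thus $\widehat{d}(\widehat{X}^2)\subseteq S$, and $(\widehat{X},\widehat{d})$ is an $S$-valued ultrametric space.

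I do not expect any serious obstacle here. The only subtlety is the choice of the threshold $r/2$ and the careful application of Lemma \ref{lem:isosceles} twice (once to replace $y_m$ by $y_n$ in the second coordinate, once to replace $x_m$ by $x_n$ in the first coordinate); beyond this, everything is the standard completion machinery.
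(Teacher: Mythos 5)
Your proof is correct: the two applications of Lemma \ref{lem:isosceles} do show that $d(x_n,y_n)$ is eventually constant for Cauchy representatives of distinct points, so the limit distance lies in $d(X^2)\subseteq S$. The paper itself gives no proof here --- it only cites (12) of \cite[Theorem 1.6]{CS} --- and your argument is exactly the standard one behind that reference, so there is nothing to object to.
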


\begin{rmk}
It follows from  Proposition \ref{prop:ultcompletion} that, 
for every separable ultrametric space 
$(X, d)$, 
the set 
$\{\, d(x, y)\mid x, y\in X\, \}$ 
is at most countable. 
Moreover, 
using Remark \ref{rmk:ps} we see 
that if $S$ is a range set with $\card(S)\le \aleph_0$,
then there exists a separable ultrametric $(X, d)$ 
such that $S=\{\, d(x, y)\mid x, y\in X\, \}$. 
This phenomenon is a reason why we consider  
$S$-valued ultrametrics. 
Dovgoshey and Shcherbak \cite{DoSh} have recently proven that 
an ultrametrizable topological space $X$ is 
separable if and only if 
for every $d\in \ult(X)$ we have $\card(d(X^2))\le \aleph_0$. 
\end{rmk}

\begin{lem}\label{lem:ultST}
Let 
$S$ 
be a range set with the countable coinitiality. 
Let 
$\{r(i)\}_{i\in \nn}$ 
be a  strictly decreasing sequence in 
$S$ 
such that 
$r(i)\to 0$ as $i\to \infty$. 
Put 
$T=\{0\}\cup\{\, r(i)\mid i\in \nn\, \}$. 
Then, for every topological space $X$, 
from $\ult(X, S)\neq \emptyset$
it follows that 
$\ult(X, T)\neq \emptyset$. 
\end{lem}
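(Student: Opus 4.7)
The plan is to reduce to Lemma \ref{lem:ultpreserving} by constructing a well-chosen post-composition $\psi$ that forces the values to land in $T$ while preserving the topology. Concretely, I pick any $d \in \ult(X, S)$ (nonempty by hypothesis) and construct a nondecreasing, amenable, $T$-valued function $\psi : [0, \infty) \to [0, \infty)$ which is continuous at $0$; then $\psi \circ d$ will be the desired $T$-valued ultrametric compatible with the topology of $X$.

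For the construction of $\psi$, I set $\psi(0) = 0$, $\psi(t) = r(i)$ whenever $t \in (r(i+1), r(i)]$ for some $i \in \nn$, and $\psi(t) = r(1)$ for $t > r(1)$. Since $\{r(i)\}$ is strictly decreasing, the intervals $(r(i+1), r(i)]$ together with $\{0\}$ and $(r(1), \infty)$ partition $[0, \infty)$, so $\psi$ is well-defined. The function is nondecreasing (on each defining interval it is constant, and passing from a lower interval to a higher one strictly increases the value from some $r(i)$ to some $r(j)$ with $j<i$, hence $r(j)>r(i)$), it is amenable because every $r(i)$ is strictly positive, and it is continuous at $0$: given $\epsilon > 0$, pick $i$ with $r(i) < \epsilon$; then for every $t \in (0, r(i)]$ one has $\psi(t) \le r(i) < \epsilon$, using $\lim_{j\to\infty} r(j) = 0$.

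Now I invoke Lemma \ref{lem:ultpreserving} to see that $\psi \circ d \in \ult(X)$, i.e.\ $\psi \circ d$ is an ultrametric compatible with the topology of $X$. By construction $\psi$ takes values in $\{0\} \cup \{\, r(i) \mid i \in \nn\, \} = T$, so $(\psi \circ d)(X^2) \subseteq T$, and therefore $\psi \circ d \in \ult(X, T)$. This yields $\ult(X, T) \neq \emptyset$ as required.

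There is no substantial obstacle; the only thing to be careful about is the monotonicity check between different intervals (which uses $r(j) > r(i)$ when $j < i$) and the continuity at $0$ (which uses $r(i) \to 0$). Both are immediate from the choice of the sequence $\{r(i)\}_{i \in \nn}$ witnessing the countable coinitiality of $S$.
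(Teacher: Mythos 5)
Your proposal is correct and is essentially identical to the paper's proof: the paper defines the very same step function $\psi$ (constant equal to $r(n)$ on $(r(n+1), r(n)]$, equal to $r(1)$ above $r(1)$, and $0$ at $0$) and applies Lemma \ref{lem:ultpreserving} to conclude $\psi\circ d\in\ult(X,T)$. Your extra verifications of monotonicity and continuity at $0$ are just the details the paper leaves implicit.
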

\begin{proof}
Take 
$d\in \ult(X, S)$. 
Define a function $\psi:[0, \infty)\to [0, \infty)$ by 
\[
\psi(x)=
\begin{cases}
r(1) &\text{if $r(1)< x$;}\\
r(n) & \text{if $r(n+1)<x\le r(n)$;}\\
0 & \text{if $x=0$. }
\end{cases}
\]
Thus, $\psi$ is increasing, amenable and continuous at $0$. 
Put $e=\psi\circ d$. 
Since $\psi([0, \infty))=T$,  by Lemma \ref{lem:ultpreserving} we have $e\in \ult(X, T)$. 
\end{proof}

\begin{lem}\label{lem:fincoi}
Let $S$ be a range set which does not have  the countable coinitiality. 
Then every $S$-valued ultrametric space 
$(X, d)$ is discrete and complete. 
\end{lem}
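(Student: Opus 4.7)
The plan is direct. First I would reformulate the hypothesis in a usable way: $S$ fails to have countable coinitiality precisely when $0$ is not an accumulation point of $S_{+}$, equivalently, when there exists some $\delta>0$ with $S\cap(0,\delta)=\emptyset$. The nontrivial direction is that if $0$ were an accumulation point of $S_{+}$, I would inductively select $r_{i}\in S_{+}$ with $r_{i}<r_{i-1}/2$, producing a strictly decreasing sequence in $S$ tending to $0$ and contradicting the hypothesis.

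With this reformulation in hand, let $(X,d)$ be an $S$-valued ultrametric space. In the degenerate case $S=\{0\}$, we have $d\equiv 0$, which forces $\card(X)\le 1$, so the conclusion is immediate. Otherwise fix $\delta>0$ with $S\cap(0,\delta)=\emptyset$. For any distinct $x,y\in X$, the positive value $d(x,y)$ belongs to $S_{+}$ and therefore satisfies $d(x,y)\ge\delta$.

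From this uniform separation the two conclusions drop out. The open ball $U(x,\delta/2;d)$ reduces to $\{x\}$, so singletons are open and $X$ is discrete. For completeness, any $d$-Cauchy sequence $\{x_{n}\}$ eventually has all pairwise distances less than $\delta$, which by the preceding paragraph forces those distances to be $0$; hence the sequence is eventually constant and converges in $X$.

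There is no real obstacle here; the content of the lemma is an order-theoretic triviality about $S$, translated into a metric statement through the single observation that every positive distance in an $S$-valued ultrametric space must lie in $S_{+}$ and is therefore bounded below by $\delta$.
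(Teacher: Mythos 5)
Your proof is correct and follows essentially the same route as the paper: both extract a $\delta>0$ with $S\cap(0,\delta)=\emptyset$ from the failure of countable coinitiality and conclude that open balls of that radius are singletons. You merely spell out the completeness step (Cauchy sequences are eventually constant) that the paper leaves implicit.
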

\begin{proof}
Since $S$ does not have countable coinitiality, 
there exists  
$r\in (0, \infty)$ such that 
$[0, r)\cap S=\{0\}$. 
Thus for every 
$x\in X$ 
we have 
$U(x, r)=\{x\}$. 
This implies the lemma. 
\end{proof}

We now prove that 
for every range set 
$S$ 
with the countable coinitiality, 
the ultrametrizability 
and the 
$S$-valued ultrametrizability are equivalent to each other.  
\begin{prop}\label{prop:ultequi}
Let 
$S$ 
be a range set. 
The the following statements are equivalent:
\begin{enumerate}
\item The set $S$ has countable coinitiality.
\item For every topological space $X$ we have 
\[
(\ult(X)\neq \emptyset)\iff (\ult(X, S)\neq \emptyset). 
\]

\end{enumerate}
\end{prop}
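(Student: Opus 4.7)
The plan is to prove the two implications separately, using Lemma \ref{lem:fincoi} for the easy direction and mimicking the construction in the proof of Lemma \ref{lem:ultST} for the harder direction.

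For $(2)\Rightarrow (1)$, I would argue by contraposition. Assume $S$ fails to have countable coinitiality. It then suffices to exhibit a single topological space $X$ for which $\ult(X)\neq\emptyset$ while $\ult(X, S)=\emptyset$. Take $X$ to be the one-point compactification of a countably infinite discrete space; explicitly $X=\nn\sqcup\{\infty\}$ with the ultrametric $d(n,m)=1/\min\{n,m\}$ for $n\neq m$ in $\nn$ and $d(n,\infty)=1/n$. This gives $d\in\ult(X)$, and $X$ is non-discrete since $\infty$ is a non-isolated point. On the other hand, Lemma \ref{lem:fincoi} asserts that every $S$-valued ultrametric space is discrete when $S$ lacks countable coinitiality, so any element of $\ult(X, S)$ would force $X$ to be discrete, a contradiction. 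Hence $\ult(X, S)=\emptyset$, negating~(2).

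For $(1)\Rightarrow(2)$, the inclusion $\ult(X, S)\subseteq \ult(X)$ is trivial since $S\subseteq[0,\infty)$, giving one direction. For the converse, suppose $\ult(X)\neq\emptyset$ and pick $d\in\ult(X)$. Using the countable coinitiality of $S$, choose a strictly decreasing sequence $\{r(i)\}_{i\in\nn}$ in $S$ with $r(i)\to 0$, and set $T=\{0\}\cup\{\,r(i)\mid i\in\nn\,\}\subseteq S$. Define $\psi\colon[0,\infty)\to[0,\infty)$ by
\[
\psi(x)=
\begin{cases}
r(1) & \text{if } r(1)<x,\\
r(n) & \text{if } r(n+1)<x\le r(n),\\
0 & \text{if } x=0,
\end{cases}
\]
which is increasing, amenable, and continuous at $0$, and satisfies $\psi([0,\infty))=T$. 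By Lemma \ref{lem:ultpreserving}, $\psi\circ d\in\ult(X)$, and since its values lie in $T\subseteq S$, we conclude $\psi\circ d\in\ult(X, T)\subseteq\ult(X, S)$. This completes~(2).

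No step is particularly difficult: the reverse implication in $(1)\Rightarrow(2)$ is essentially a repackaging of Lemma \ref{lem:ultST} with the role of $S$ played by $[0,\infty)$, and the key point in $(2)\Rightarrow(1)$ is just to produce any non-discrete ultrametrizable space and invoke Lemma \ref{lem:fincoi}. The mildest pitfall is to remember to choose the approximating sequence $\{r(i)\}$ inside $S$ itself (not merely in $[0,\infty)$), which is precisely what countable coinitiality permits.
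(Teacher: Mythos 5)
Your proof is correct and follows essentially the same route as the paper: the implication $(2)\Rightarrow(1)$ is the paper's contrapositive argument via Lemma \ref{lem:fincoi} (you use an explicit $(\omega_0+1)$-space where the paper cites the Cantor set), and $(1)\Rightarrow(2)$ is the paper's argument via Lemma \ref{lem:ultpreserving}. Your version of the latter is in fact slightly streamlined, since you apply the step-function $\psi$ directly to an arbitrary $d\in\ult(X)$ rather than first passing, as the paper does, through the intermediate range set $A=\{0\}\cup\{\,2^{-i}\mid i\in\nn\,\}$ via Lemma \ref{lem:ultST}.
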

\begin{proof}
We first prove the implication $(1)\To (2)$. 
Since for every topological space $X$ we have $\ult(X, S)\subseteq \ult(X)$, 
it suffices to show that if 
$\ult(X)\neq \emptyset$, 
then  we have 
$\ult(X, S)\neq \emptyset$. 
Let 
$\{r(i)\}_{i\in \nn}$ 
be a  strictly decreasing sequence in 
$S$ 
such that 
$r(i)\to 0$ as $i\to \infty$. 
Put 
$T=\{0\}\cup\{\, r(i)\mid i\in \nn\, \}$, 
and put 
$A=\{0\}\cup \{\, 2^{-i}\mid i\in \nn\, \}$. 
Then there exists an  increasing amenable   function
$\psi :[0, \infty)\to [0, \infty)$ which is continuous at $0$ and satisfies  $\psi(A)=T$. 
Since 
$\ult(X)\neq \emptyset$, Lemma \ref{lem:ultST} implies 
that $\ult(X, A)\neq \emptyset$. 
Thus, by Lemma \ref{lem:ultpreserving} and $\psi(A)=T$, 
we have 
$\ult(X, T)\neq\emptyset$. 
From 
$\ult(X, T)\subseteq \ult(X, S)$, 
we conclude that $\ult(X, S)\neq \emptyset$. 

We next prove the implication $(2)\To (1)$. 
Suppose that $S$ does not have countable coinitiality. 
Let $X$ be a non-discrete topological space with $\ult(X)\neq \emptyset$ (for example, we can choose $X$ as the Cantor set). 
By Lemma \ref{lem:fincoi},  
we have 
$\ult(X, S)=\emptyset$. 
Thus, we conclude that the implication $(2)\To (1)$ holds true. 
\end{proof}

A topological  space 
$X$ 
is said to be 
\emph{$0$-dimensional} if 
for every pair of  disjoint  closed subsets 
$A$ 
and  
$B$ 
of 
$X$, 
there exists a clopen subset 
$Q$ 
of 
$X$ 
with 
$A\subseteq Q$ 
and 
$Q\cap B=\emptyset$. 
Such a space is sometimes also said to be \emph{ultranormal}.  
Note that a metric space 
$(X, d)$ 
is 
$0$-dimensional 
if and only if 
every finite open covering of 
$X$ 
has 
a  refinement   
consisting of mutually disjoint finite open sets.   
This equivalence   follows from  
the fact that  
for every metric space, 
the large inductive dimension coincides with  the covering dimension
 (see e.g., \cite[Chapter 4, Theorem 5.4]{P}). 

The following was proven by  de Groot \cite{Gr1} (see also \cite{CS}). 
\begin{prop}\label{prop:ult0}
All ultrametrizable spaces are $0$-dimensional. 
\end{prop}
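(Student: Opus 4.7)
The plan is to use a direct construction that exploits the defining feature of the strong triangle inequality: in any ultrametric space, every open ball is automatically clopen. Fix an ultrametric $d\in \ult(X)$ generating the topology, and let $A,B$ be disjoint closed subsets of $X$. The trivial cases $A=\emptyset$ or $B=\emptyset$ I would dispatch immediately by taking $Q=\emptyset$ or $Q=X$, so I may assume both $A$ and $B$ are non-empty.

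For each $a\in A$, I would set $r_a=\inf_{b\in B}d(a,b)$, which is strictly positive because $B$ is closed and $a\notin B$. Then I would define
\[
Q=\bigcup_{a\in A}U(a,r_a).
\]
By construction $Q$ is open, contains $A$, and is disjoint from $B$ by the choice of the radii $r_a$.

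The main step is to check that $Q$ is also closed. I would take $x\in \overline{Q}$, and for each $n\in \nn$ choose $y_n\in U(x,1/n)\cap Q$ together with $a_n\in A$ such that $d(y_n,a_n)<r_{a_n}$. The strong triangle inequality then yields
\[
d(x,a_n)\le d(x,y_n)\lor d(y_n,a_n)<(1/n)\lor r_{a_n}.
\]
If some $n$ satisfies $r_{a_n}\ge 1/n$, then $d(x,a_n)<r_{a_n}$, placing $x\in U(a_n,r_{a_n})\subseteq Q$. Otherwise $r_{a_n}<1/n$ for every $n$, which forces $d(x,a_n)<1/n$, and hence $a_n\to x$; the closedness of $A$ then gives $x\in A\subseteq Q$. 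Either way $x\in Q$, so $Q$ is the desired clopen separator.

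The subtle point, rather than a genuine obstacle, is this case-split: when the radii $r_{a_n}$ collapse to zero along the sequence, $x$ is not captured by any single ball $U(a_n,r_{a_n})$, and one must instead recognize that the witnesses $a_n$ themselves converge to $x$, so that the closedness of $A$ supplies the membership of $x$ in $Q$. Without this observation, the strong triangle bound alone does not immediately recover $x\in Q$.
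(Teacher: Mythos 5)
Your argument is correct. The paper itself gives no proof of this proposition --- it simply cites de Groot's paper \cite{Gr1} --- so your self-contained construction is necessarily a different route, and it is a sound one. The separator $Q=\bigcup_{a\in A}U(a,r_a)$ with $r_a=\inf_{b\in B}d(a,b)>0$ clearly is open, contains $A$, and misses $B$, and your closedness argument is where the real content lies: from $d(x,y_n)<1/n$ and $d(y_n,a_n)<r_{a_n}$ the strong triangle inequality gives $d(x,a_n)<(1/n)\lor r_{a_n}$, and the dichotomy you draw (either some ball $U(a_n,r_{a_n})$ already captures $x$, or the radii shrink and the witnesses $a_n$ converge to $x$, so closedness of $A$ finishes) is exactly the point that a naive ``union of clopen balls'' argument misses, since an infinite union of clopen sets need not be closed. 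One remark for comparison with the standard literature: a common alternative is to show that every open ball in an ultrametric space is clopen and that two balls are either disjoint or nested, and then assemble the separator from a suitable subfamily; your approach avoids that machinery at the cost of the explicit limit-point analysis, which you carry out correctly.
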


We now clarify 
a relation between the complete 
$S$-valued ultrametrizability 
for a range set $S$
and the complete metrizability. 
The proofs of the following lemma and proposition are 
$S$-valued  ultrametric analogues  of  \cite[Theorem 24.12]{W}. 
\begin{lem}\label{lem:ultopencomp}
Let 
$S$ 
be a range set with the countable coinitiality. 
Let 
$X$ 
be a completely 
$S$-valued ultrametrizable, 
and let 
$G$ 
be an open subset of 
$X$. 
Then 
$G$ 
is completely 
$S$-valued  ultrametrizable. 
\end{lem}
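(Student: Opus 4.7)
The plan is to mimic the classical proof of Willard's Theorem 24.12 that an open subset of a completely metrizable space is completely metrizable. In that argument one embeds $G$ as a closed subset of $X\times\rr$ via $x\mapsto (x, 1/d(x, X\setminus G))$ and pulls back the product metric. For the $S$-valued ultrametric version, I will replace $\rr$ with the countable discrete set $\nn$ equipped with a suitable $S$-valued ultrametric, and replace $1/d(x,X\setminus G)$ with an integer-valued \emph{level function} $k\colon G\to\nn$ that diverges to infinity as $x$ approaches $F:=X\setminus G$.

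Fix a complete $d\in\ult(X,S)$ and assume $F\neq\emptyset$ (otherwise $G=X$ and there is nothing to prove). Using the countable coinitiality of $S$, choose a strictly decreasing sequence $\{r_n\}_{n\in\nn}\subseteq S_+$ with $\lim_n r_n=0$, and fix some $s_0\in S_+$. Set $\lambda(x):=d(x,F)$ for $x\in X$ and define $k\colon G\to\nn$ by $k(x):=\min\{n\in\nn:r_n\le\lambda(x)\}$. The crucial ultrametric input is that $\lambda$ is locally constant on $G$: if $d(x,y)<\lambda(x)\le d(x,z)$ for some $z\in F$, then Lemma~\ref{lem:isosceles} forces $d(y,z)=d(x,z)$, so taking infima over $z\in F$ gives $\lambda(y)=\lambda(x)$. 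Consequently $k$ itself is locally constant on $G$, hence continuous, and $k(x_j)\to\infty$ whenever $\lambda(x_j)\to 0$.

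Equip $\nn$ with the discrete $\{0,s_0\}$-valued ultrametric $\delta(n,m):=s_0$ for $n\neq m$; this is a complete $S$-valued ultrametric. For $x,y\in G$ define
\[
 e(x,y):=d(x,y)\lor\delta(k(x),k(y)),
\]
which is the pullback under $x\mapsto(x,k(x))$ of the $\ell^\infty$-product ultrametric on $X\times\nn$ and is therefore an $S$-valued ultrametric on $G$. It remains to verify that $e$ generates the subspace topology of $G$ and that $e$ is complete. For the topology: since $d\le e$, the $e$-topology refines the $d$-topology; conversely, if $x_j\to x$ in $d$ with $x\in G$, then eventually $d(x_j,x)<\lambda(x)$, so $k(x_j)=k(x)$ and hence $e(x_j,x)=d(x_j,x)\to 0$. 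For completeness, any $e$-Cauchy sequence $\{x_j\}$ is $d$-Cauchy, hence converges in $X$ to some $x$; meanwhile $\delta$-Cauchyness forces $\{k(x_j)\}$ to be eventually constant with some value $k$, so $\lambda(x_j)\ge r_k$ eventually, whence $\lambda(x)\ge r_k>0$ and $x\in G$.

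The principal obstacle is that the analogue of the classical coordinate $1/d(\cdot,F)$ ought to be both discretely $S$-valued (so that the target ultrametric $\delta$ is available) and continuous; in a general metric space a discretely-valued step function of $\lambda$ would be discontinuous along the level hypersurfaces of $\lambda$, but the ultrametric locally-constant behavior of $d(\cdot,F)$ sidesteps this issue entirely. The countable coinitiality of $S$ is used solely to construct the scale $\{r_n\}$ along which the level function $k$ is defined.
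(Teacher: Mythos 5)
Your proof is correct, and it takes a genuinely different route from the paper's. The paper follows the classical template of Willard's Theorem 24.12 in a non-Archimedean disguise: it uses $0$-dimensionality and the $F_{\sigma}$-ness of open sets to exhaust $G$ by an increasing sequence of clopen sets $O_n$, builds a continuous function $F(x)=\sum_i a_i\chi_{O_i}(x)$ into $\qq_2$ vanishing exactly on $X\setminus G$, transports a complete $S$-valued ultrametric onto $\qq_2$ via the $2$-adic valuation, and finally takes the maximum of $d$ with the pullback of that ultrametric under $x\mapsto 1/F(x)$. You instead observe that in an ultrametric space the function $\lambda=d(\cdot,X\setminus G)$ is locally constant on $G$ (a direct consequence of Lemma \ref{lem:isosceles}), so the integer level function $k$ is continuous even though it is discretely valued, and the single extra coordinate $k(x)$ in a uniformly discrete copy of $\nn$ already forces completeness. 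Your argument is shorter and avoids the detour through clopen exhaustions and $\qq_2$ entirely; the verifications (that $e$ is an $S$-valued ultrametric, that it induces the subspace topology because $k$ is locally constant, and that $e$-Cauchy sequences have eventually constant level and hence limit in $G$) are all sound. The one place to be slightly careful, which you handle correctly, is that $k(x)=\min\{n:r_n\le\lambda(x)\}$ is well defined precisely because $\lambda(x)>0$ on $G$ and $r_n\to 0$; this is also exactly where the countable coinitiality of $S$ enters, matching its role in the paper's proof.
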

\begin{proof}
Sine $X$ is $0$-dimensional,  
and since all open sets of metric spaces are 
$F_{\sigma}$, 
there exists a sequence $\{O_n\}_{n\in \nn}$ 
of clopen sets of 
$X$ such that
\begin{enumerate}
\item for each $n\in \nn$, we have $O_n\subseteq O_{n+1}$;
\item $G=\bigcup_{n\in \nn}O_n$. 
\end{enumerate} 
Take a sequence of $\{a_n\}_{n\in \nn}$ 
in the field 
$\qq_2$ of  all 
$2$-adic numbers  
such that 
for each 
$m\in\nn$ 
the sum 
$\sum_{i=m}^{\infty}a_i$ 
is convergent to a non-zero 
$2$-adic number 
(for example, we can take $a_n=2^{n}-2^{n+1}$). 
Define a function 
$F: X\to \qq_2$ 
by 
$F(x)=\sum_{i=1}^{\infty}a_i\cdot \chi_{O_i}(x)$, 
where 
$\chi_{O_i}$ 
is the characteristic function of 
$O_i$. 
Then $F$ is continuous. 
By the assumption on 
$\{a_n\}_{n\in \nn}$, 
for every $x\in G$, 
we have $F(x)\neq 0$ 
and $F|_{X\setminus G}= 0$. 
Take a complete 
$S$-valued  ultrametric 
$d\in \ult(X, S)$. 
We denote by 
$v_2: \qq_2\to \zz\sqcup \{\infty\}$ 
the 
$2$-adic  valuation on 
$\qq_2$. 
Take a strictly decreasing sequence 
$\{r(i)\}_{i\in \nn}$ in $S$ 
with 
$\lim_{i\to \infty}r(i)=0$. 
We put $r(\infty)=0$. 
Then a metric 
$W: (\qq_{2})^2\to S$ defined by 
\[
W(x, y)=r(v_2(x-y)\lor 1)
\]
belongs to $\ult(\qq_{2}, S)$, 
and it is complete. 
Define a metric $D$ on $G$ by 
\[
D(x, y)=W\left(\frac{1}{F(x)}, \frac{1}{F(y)}\right)\lor d(x, y). 
\]
Since the function $1/F$ is continuous on $G$, 
we have $D\in \ult(G, S)$. 
We next show that $D$ is complete. 
Assume that  
$\{x_n\}_{n\in \nn}$ is Cauchy in $(G, D)$. 
Then 
$\{1/F(x_n)\}_{n\in \nn}$ and 
$\{x_n\}_{n\in \nn}$  
are Cauchy in 
$(\qq_2, W)$ 
and 
$(X, d)$, 
respectively. 
Thus, 
there exist 
$A\in \qq_2$ 
and 
$B\in X$ 
such that 
 $1/F(x_n)\to A$ 
 in 
 $(\qq_2, W)$, 
 and 
 $x_n\to B$ in $(X, d)$. 
If $B\not\in G$, 
then we have $F(x_n)\to 0$. 
This contradicts to $1/F(x_n)\to A$. 
Thus $B\in G$, and hence $D$ is complete. 
Therefore we conclude that $G$ is completely $S$-valued  ultrametrizable. 
\end{proof}

\begin{prop}\label{prop:completelyult}
Let $S$ be a range set with the countable coinitiality. 
A topological space $X$ is completely $S$-valued ultrametrizable 
if and only if 
$X$ is completely metrizable and $S$-valued ultrametrizable. 
\end{prop}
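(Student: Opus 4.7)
The forward implication is immediate: any complete $d\in\ult(X,S)$ is simultaneously a complete metric on $X$ (so $X$ is completely metrizable) and an element of $\ult(X,S)$ (so $X$ is $S$-valued ultrametrizable).

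For the converse, my plan is to mimic the Alexandroff-type argument that underlies Lemma \ref{lem:ultopencomp}, but now with countably many open sets instead of one. First I would fix some $d\in\ult(X,S)$ and form the completion $(\hat X,\hat d)$; by Proposition \ref{prop:ultcompletion} we have $\hat d\in\ult(\hat X,S)$, so $\hat X$ is completely $S$-valued ultrametrizable and $X\subseteq\hat X$. Since $X$ is assumed completely metrizable, Alexandroff's classical theorem (cf.~\cite[Theorem 24.12]{W}) forces $X$ to be a $G_{\delta}$-subset of $\hat X$, so $X=\bigcap_{n\in\nn}G_{n}$ with each $G_{n}$ open in $\hat X$. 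Applying Lemma \ref{lem:ultopencomp} to each $G_{n}$ (inside the completely $S$-valued ultrametrizable space $\hat X$) produces a complete ultrametric $D_{n}\in\ult(G_{n},S)$.

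The heart of the proof is then to amalgamate $\{D_{n}\}_{n\in\nn}$ together with $d$ into one complete $S$-valued ultrametric on $X$. Using the countable coinitiality of $S$, I fix a strictly decreasing sequence $\{\epsilon_{n}\}_{n\in\nn}\subseteq S$ with $\epsilon_{n}\to 0$ and truncate by setting $D_{n}'(x,y)=\min\{D_{n}(x,y),\epsilon_{n}\}$; by Lemma \ref{lem:ultland} we have $D_{n}'\in\ult(G_{n},S)$, and of course $D_{n}'(x,y)\le\epsilon_{n}$. Then I define
\[
D(x,y)=d(x,y)\lor \sup_{n\in\nn}D_{n}'(x,y)\qquad (x,y\in X).
\]
Since $D_{n}'(x,y)\le\epsilon_{n}\to 0$, for each fixed pair $(x,y)$ only finitely many $D_{n}'(x,y)$ exceed any prescribed positive threshold, so the supremum is attained and equals some $D_{n_{0}}'(x,y)\in S$. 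Hence $D(x,y)$ is the maximum of two elements of $S$ and itself lies in $S$.

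The remaining verifications should be routine. The function $D$ is an ultrametric as a pointwise supremum of ultrametrics. It generates the original topology because, fixing $y\in X$ and $\eta>0$, choosing $N$ with $\epsilon_{n}<\eta$ for $n>N$ makes the tail $(D_{n}')_{n>N}$ automatically bounded by $\eta$, while the finitely many remaining functions $d,D_{1}',\dots,D_{N}'$ are all continuous at $y$ in the ambient topology. Completeness of $D$ follows from a standard argument: a $D$-Cauchy sequence $\{x_{k}\}$ is $D_{n}$-Cauchy for each $n$ (once its $D$-oscillation drops below $\epsilon_{n}$ the truncation becomes inactive), hence converges in $(G_{n},D_{n})$, and by uniqueness of limits in $\hat X$ the common limit lies in $\bigcap_{n}G_{n}=X$. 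The main obstacle is keeping the combined metric inside $S$ while still reproducing the correct topology; the truncation by $\epsilon_{n}\in S$ with $\epsilon_{n}\searrow 0$ is precisely the mechanism that balances these two requirements, and this is exactly where the countable-coinitiality hypothesis is indispensable.
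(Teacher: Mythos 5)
Your argument is correct and follows essentially the same route as the paper: pass to the completion, write $X$ as a $G_{\delta}$ there, apply Lemma \ref{lem:ultopencomp} to each open set, truncate by a coinitial sequence from $S$ via Lemma \ref{lem:ultland}, and take the pointwise supremum. The only cosmetic difference is that the paper first replaces $S$ by the closed sub-range-set $T=\{0\}\cup\{\,r(i)\mid i\in\nn\,\}$ so that the supremum automatically lies in $T\subseteq S$, whereas you argue directly that the truncation forces the supremum to be attained (hence to lie in $S$); both are valid.
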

\begin{proof}
It suffices to show 
that if $X$ is completely metrizable 
and $S$-valued ultrametrizable, 
then $X$ is completely $S$-valued ultrametrizable. 
Take a strictly decreasing sequence 
$\{r(i)\}_{i\in \nn}$ in $S$ with $\lim_{i\to \infty}r(i)=0$. 
Put $T=\{0\}\cup\{\, r(i)\mid i\in \nn\, \}$. 
Then  
$T$ is a range set  with 
$T\subseteq S$. 
Note that $T$ has  countable coinitiality 
and it is a closed set of  $[0, \infty)$. 
By Proposition \ref{prop:ultequi}, 
we can take an ultrametric 
$d\in \ult(X, T)$. 
Let $(Y, D)$ be a completion of $(X, d)$. 
By Proposition \ref{prop:ultcompletion},  
the space $(Y, D)$ is a $T$-valued  ultrametric space. 
Since $X$ is completely metrizable,  
$X$ is $G_{\delta}$ in $Y$ (see \cite[Theorem 4.3.24]{Eng} or \cite[Theorem 24.12]{W}). 
Thus there exists a sequence 
$\{G_n\}_{n\in \nn}$ of open sets in $Y$ such that 
$X=\bigcap_{n\in \nn}G_n$. 
By Lemmas \ref{lem:ultland} and  \ref{lem:ultopencomp}, 
we can take a sequence 
$\{e_n\}_{n\in \nn}$ 
of complete  $T$-valued ultrametrics such that $e_n\in \ult(G_n, T)$ 
and 
$e_n(x, y)\le r(n)$ 
for all 
$x, y\in G_n$ 
and  for all 
$n\in \nn$. 
Define an 
$S$-valued ultrametric 
$h\in \ult(X, S)$ by 
\[
h(x, y)=\sup_{n\in \nn}e_n(x, y). 
\]
Then $h$ is complete.
Since $T$ is a closed set of $[0, \infty)$, 
we have $h\in \ult(X, T)$. 
By $\ult(X, T)\subseteq \ult(X, S)$, 
we obtain a complete $S$-valued ultrametric $h\in \ult(X, S)$. 
%%%%%%%%%%%MMMM
\end{proof}

Combining Lemma \ref{lem:fincoi} and Proposition \ref{prop:completelyult}, 
we obtain: 
\begin{prop}\label{prop:Sultcomp}
Let $S$ be a range set. 
A topological space $X$ is completely $S$-valued ultrametrizable 
if and only if 
$X$ is completely metrizable and $S$-valued ultrametrizable. 
\end{prop}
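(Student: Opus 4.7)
The plan is to split into cases according to whether $S$ has countable coinitiality, since this dichotomy is exactly what separates the already-established Proposition \ref{prop:completelyult} from the degenerate behavior captured by Lemma \ref{lem:fincoi}.

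First I would dispatch the easy direction: if $X$ is completely $S$-valued ultrametrizable, there exists a complete metric $d \in \ult(X,S)$; this $d$ witnesses both that $X$ is completely metrizable and that $X$ is $S$-valued ultrametrizable. This direction is immediate from the definitions and uses neither case split nor any auxiliary result.

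For the converse, assume $X$ is completely metrizable and $S$-valued ultrametrizable. In the case that $S$ has countable coinitiality, Proposition \ref{prop:completelyult} applies verbatim and delivers the conclusion, so there is nothing further to do. In the case that $S$ does \emph{not} have countable coinitiality, I would invoke Lemma \ref{lem:fincoi}: every $S$-valued ultrametric space is automatically discrete and complete. Since $X$ is $S$-valued ultrametrizable, any $d \in \ult(X,S)$ satisfies the hypothesis of that lemma, and hence is already complete. Therefore $d$ itself is a complete $S$-valued ultrametric generating the topology of $X$, showing that $X$ is completely $S$-valued ultrametrizable.

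There is essentially no obstacle here, since the proposition is advertised as a direct combination of two previously proved results; the only thing to be careful about is to notice that the hypothesis "completely metrizable" in the case without countable coinitiality is not actually needed (Lemma \ref{lem:fincoi} makes every admissible ultrametric complete for free), so the two cases stitch together cleanly. The proof plan is therefore a two-line case analysis preceded by the trivial forward implication.
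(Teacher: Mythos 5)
Your proposal is correct and matches the paper exactly: the paper derives this proposition by "combining Lemma \ref{lem:fincoi} and Proposition \ref{prop:completelyult}," which is precisely your case split on whether $S$ has countable coinitiality, with the forward implication being trivial. Nothing further is needed.
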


Since every $G_{\delta}$ subset of every  complete metrizable space is 
completely metrizable,  Proposition \ref{prop:Sultcomp} implies: 

\begin{cor}
Let $S$ be a range set. 
If $X$ is a completely $S$-valued ultrametrizable space,  
then so is every $G_{\delta}$ subset of $X$. 
\end{cor}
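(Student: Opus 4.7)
The plan is to deduce this corollary by combining Proposition \ref{prop:Sultcomp} with the classical fact, recalled in the sentence preceding the statement, that every $G_{\delta}$ subset of a completely metrizable space is completely metrizable. The point is that Proposition \ref{prop:Sultcomp} splits ``completely $S$-valued ultrametrizable'' into two independent conditions, each of which passes to $G_{\delta}$ subsets for easy reasons.

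Let $X$ be a completely $S$-valued ultrametrizable space and let $Y$ be a $G_{\delta}$ subset of $X$. First I would apply Proposition \ref{prop:Sultcomp} to $X$ to conclude that $X$ is completely metrizable and $S$-valued ultrametrizable. Next I would invoke the classical theorem on $G_{\delta}$ subsets of completely metrizable spaces (already cited earlier in the excerpt via \cite[Theorem 4.3.24]{Eng} and \cite[Theorem 24.12]{W}) to conclude that $Y$ is completely metrizable. Then I would pick any $d\in \ult(X, S)$ and observe that its restriction $d|_{Y^2}$ is an $S$-valued ultrametric on $Y$ which generates the subspace topology, so $\ult(Y, S)\neq \emptyset$; that is, $Y$ is $S$-valued ultrametrizable. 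Finally, applying Proposition \ref{prop:Sultcomp} in the other direction to $Y$, one obtains a complete $S$-valued ultrametric inducing the topology of $Y$.

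There is no real obstacle here: all the substance has been absorbed into Proposition \ref{prop:Sultcomp}. The only minor verification is that $d|_{Y^2}$ generates the subspace topology on $Y$, which is a standard property of restrictions of metrics and does not require the ultrametric structure. Note that the argument works uniformly for every range set $S$, because Proposition \ref{prop:Sultcomp} itself is stated without the countable coinitiality hypothesis.
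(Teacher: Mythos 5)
Your proposal is correct and follows exactly the route the paper intends: the corollary is stated immediately after Proposition \ref{prop:Sultcomp} with the one-line justification that $G_{\delta}$ subsets of completely metrizable spaces are completely metrizable, and your argument simply spells out that deduction (including the easy observation that restricting a metric in $\ult(X,S)$ to $Y$ witnesses $\ult(Y,S)\neq\emptyset$). Nothing is missing.
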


\subsection{Continuous functions on $0$-dimensional spaces}\label{subsec:conti}

The following theorem was  stated in  \cite[Theorem 1.1]{D}, and 
a Lipschitz version of it  was proven in \cite[Theorem 2.9]{BDHM}. 
\begin{prop}\label{prop:ultret}
Let $X$ be an ultrametrizable space,  
and let $A$ be a non-empty closed subset of $X$. 
Then there exists a retraction from  $X$ into $A$;  namely, 
 there exists a continuous map $r: X\to A$ with $r|_A=1_A$. 
\end{prop}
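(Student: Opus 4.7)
Plan: I would fix a compatible ultrametric $d$ on $X$ and construct the retraction $r \colon X \to A$ in two stages: first produce a natural partition of $X \setminus A$ into clopen balls, then choose an approximating point of $A$ for each such ball. For every $x \in X \setminus A$ the quantity $\rho(x) := d(x, A)$ is strictly positive, because $A$ is closed. Applying the isosceles property (Lemma~\ref{lem:isosceles}) to any triangle with vertices $x, y, a$ where $a \in A$ and $d(x, y) < \rho(x) \le d(x, a)$ gives $d(y, a) = d(x, a)$, so $\rho$ is constant on the open ball $U(x, \rho(x))$, and this ball is contained in $X \setminus A$.

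Since in any ultrametric space two open balls are either disjoint or equal (every point of an open ball is a center), the family $\mathcal{P} = \{\, U(x, \rho(x)) : x \in X \setminus A \,\}$ is a partition of $X \setminus A$ by open balls. Moreover each $B \in \mathcal{P}$ is clopen in $X$, since open balls in an ultrametric space are closed. For each $B \in \mathcal{P}$ I would then choose, using the axiom of choice, a point $a_B \in A$ with $d(z, a_B) < 2 \rho(B)$ for some $z \in B$, where $\rho(B)$ denotes the common value of $\rho$ on $B$; the isosceles property ensures the same estimate holds for every $z \in B$. Define $r \colon X \to A$ by $r|_A = 1_A$ and $r(x) = a_B$ whenever $x \in B \in \mathcal{P}$.

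To verify continuity, observe that on $X \setminus A$ the map $r$ is locally constant, since each $B \in \mathcal{P}$ is clopen in $X$, so continuity on this open set is immediate. For continuity at a point $a \in A$, given $\epsilon > 0$ I would set $\delta = \epsilon / 2$ and consider $x$ with $d(x, a) < \delta$: if $x \in A$ then $d(r(x), a) = d(x, a) < \epsilon$; if $x \notin A$ then $\rho(x) \le d(x, a) < \delta$, whence $d(x, r(x)) < 2 \rho(x) < \epsilon$, and the strong triangle inequality yields $d(r(x), a) \le d(r(x), x) \lor d(x, a) < \epsilon$.

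The step demanding the most care is the construction of the partition $\mathcal{P}$ together with the verification that $\rho$ is constant on each of its members, as this is where the ultrametric structure is essential — the analogous assertion fails decisively in general metric spaces. Once $\mathcal{P}$ is available, the continuity at boundary points of $A$ falls out of the bound $d(x, r(x)) < 2 d(x, A) \le 2 d(x, a)$ for $a \in A$, in another brief application of the isosceles property.
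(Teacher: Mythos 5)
Your argument is correct. Note that the paper does not actually prove Proposition \ref{prop:ultret}; it only cites Dancis \cite{D} and the Lipschitz version in \cite[Theorem 2.9]{BDHM}. Your proof is a clean, self-contained version of the standard argument behind those references: the key observation that $\rho(x)=d(x,A)$ is constant on $U(x,\rho(x))$, that these open balls are clopen and partition $X\setminus A$ (two such balls meeting at a point $z$ must have the same radius $\rho(z)$ and hence coincide), and that a near-optimal choice $a_B\in A$ with $d(\cdot,a_B)<2\rho(B)$ propagates to the whole ball by the isosceles property, all check out. The continuity estimate at a point $a\in A$ is also correct: from $\rho(x)\le d(x,a)<\epsilon/2$ one gets $d(x,r(x))<\epsilon$ and then $d(r(x),a)\le d(r(x),x)\lor d(x,a)<\epsilon$. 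What your write-up buys over the bare citation is an explicit, elementary construction; if you wanted the Lipschitz-type refinement of \cite{BDHM} you would tighten the factor $2$ by choosing $a_B$ with $d(z,a_B)<(1+\eta)\rho(B)$, but for the purely topological statement needed here your version is complete as it stands.
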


\begin{cor}\label{cor:ultes}
Let $X$ be an ultrametrizable space,  
and let $A$ be a closed subset of $X$. 
Let $Y$ be a topological space. 
Then  every continuous map $f: A\to Y$ 
can be extended  to a continuous map from $X$ to $Y$. 
\end{cor}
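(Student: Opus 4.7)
The plan is to reduce the statement directly to Proposition \ref{prop:ultret}, which supplies a retraction from $X$ onto any non-empty closed subset. The construction is the standard one: compose the given map with a retraction.

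More concretely, I would first dispose of the degenerate case $A=\emptyset$: then $f$ is the empty function, and any constant map $X\to Y$ (picking any point of $Y$, or noting that $X$ must itself be empty when $Y=\emptyset$) serves as an extension. So assume $A\neq\emptyset$. By Proposition \ref{prop:ultret} applied to $X$ and the non-empty closed subset $A$, there exists a continuous retraction $r:X\to A$ with $r|_A=1_A$. Define
\[
F:X\to Y,\qquad F=f\circ r.
\]
Then $F$ is continuous as the composition of two continuous maps, and for every $a\in A$ we have
\[
F(a)=f(r(a))=f(1_A(a))=f(a),
\]
so $F|_A=f$. This produces the required continuous extension.

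There is essentially no obstacle here, since all the content has been pushed into Proposition \ref{prop:ultret}; the only point requiring any attention is the bookkeeping for the empty case, which is why I would separate it at the start. The target space $Y$ plays no role beyond being the codomain of $f$, so no assumption on $Y$ (such as being a retract of anything, or having any extension property of its own) is needed.
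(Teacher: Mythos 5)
Your proof is correct and is essentially identical to the paper's: the paper likewise reduces to the non-empty case, takes the retraction $r:X\to A$ from Proposition \ref{prop:ultret}, and sets $F=f\circ r$. The only difference is your explicit treatment of the case $A=\emptyset$, which the paper dismisses with "we may assume $A$ is non-empty."
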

\begin{proof}
We may assume that $A$ is non-empty. 
By Proposition \ref{prop:ultret}, 
there exists a retraction $r: X\to A$. 
Put $F=f\circ r$, 
then $F$ is a continuous extension of $f$. 
\end{proof}

Let 
$Z$ 
be a metrizable space. 
We denote by 
$\clo(Z)$ 
the set of all non-empty closed  subsets  of $Z$. 
For a topological space $X$ 
we say that a map 
$\phi: X\to \clo(Z)$ 
is 
\emph{lower semi-continuous} 
if for every open subset 
$O$ of $Z$ 
the set 
$\{\, x\in X\mid \phi(x)\cap O\neq \emptyset\, \}$ 
is open in $X$. 

The following  theorem is known as 
the $0$-dimensional  Michael continuous selection theorem, which  was 
first stated in \cite{Mi2}, 
essentially in \cite{Mi1}
(see also \cite[Proposition 1.4]{IMi}). 
\begin{thm}\label{thm:zeroMichael}
Let $X$ be a $0$-dimensional paracompact space, 
and $A$  a closed subsets of $X$. 
Let $Z$ be a completely metrizable space. 
Let 
$\phi:X\to \clo(Z)$ 
be a lower semi-continuous map. 
If  a continuous map 
$f:A\to Z$ 
satisfies 
$f(x)\in \phi(x)$ 
for all 
$x\in A$, 
then there exists a continuous map 
$F:X\to Z$ with $F|_A=f$ 
such that 
for every 
$x\in X$ 
we have 
$F(x)\in \phi(x)$. 
\end{thm}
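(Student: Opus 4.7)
The plan is to run Michael's classical selection construction, specialized to the $0$-dimensional setting where locally finite partitions of unity are replaced by locally finite partitions of $X$ into pairwise disjoint clopen sets, with extra bookkeeping to enforce the boundary condition on $A$.

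Fix a bounded compatible complete metric $\rho$ on $Z$, say with $\rho \leq 1$. I would construct by induction on $n \geq 0$ continuous maps $F_n : X \to Z$ such that (i) $\rho(F_n(x), \phi(x)) < 2^{-n}$ for every $x \in X$, (ii) $\rho(F_n(x), f(x)) < 2^{-n}$ for every $x \in A$, and (iii) $\rho(F_n(x), F_{n+1}(x)) < 2^{-n+1}$ for every $x \in X$. Once this is done, the sequence $\{F_n\}$ is uniformly Cauchy, its uniform limit $F$ is continuous, $F|_A = f$ by (ii), and $F(x) \in \phi(x)$ by (i) together with closedness of $\phi(x)$.

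For the inductive step, given $F_n$, I would build an open cover $\{W_x\}_{x \in X}$ as follows. For $x \in X \setminus A$, pick $z_x \in \phi(x)$ with $\rho(F_n(x), z_x) < 2^{-n}$ and set
\[
W_x = \{\, y \in X \setminus A : \rho(F_n(y), z_x) < 2^{-n+1}\, \} \cap \{\, y : \phi(y) \cap U(z_x, 2^{-n-1}) \neq \emptyset\, \},
\]
where the excision of $A$ is essential and uses closedness of $A$. For $x \in A$, put $z_x = f(x)$, use continuity of $f$ on $A$ to pick an open neighborhood $N_x$ of $x$ in $X$ with $f(N_x \cap A) \subseteq U(f(x), 2^{-n-1})$, and set
\[
W_x = N_x \cap \{\, y : \rho(F_n(y), z_x) < 2^{-n+1}\, \} \cap \{\, y : \phi(y) \cap U(z_x, 2^{-n-1}) \neq \emptyset\, \}.
\]
Openness of $W_x$ follows from continuity of $F_n$ and lower semi-continuity of $\phi$, and one checks $x \in W_x$. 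Using paracompactness together with $0$-dimensionality of $X$, refine $\{W_x\}$ to a locally finite partition $\{V_j\}_{j \in J}$ of $X$ into pairwise disjoint clopen sets; for each $j$ choose $c_j$ with $V_j \subseteq W_{c_j}$ and set $F_{n+1}(y) = z_{c_j}$ for $y \in V_j$. Local finiteness of a clopen partition makes $F_{n+1}$ continuous. Verification of (i) and (iii) is immediate from the definition of $W_{c_j}$; for (ii) the point is that if $c_j \notin A$ then $V_j \subseteq X \setminus A$ renders the condition vacuous on $V_j$, and if $c_j \in A$ then $V_j \cap A \subseteq N_{c_j} \cap A$ gives $\rho(F_{n+1}(y), f(y)) = \rho(f(c_j), f(y)) < 2^{-n-1}$. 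The base case $F_0$ is produced by the same clopen partition trick with the condition involving $F_n$ omitted.

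The principal obstacle is the tension between the selection half of the argument, which forces $F_{n+1}$ to be locally constant on a clopen partition, and the boundary condition, which asks $F_n|_A$ to approximate a genuinely continuous, typically non-constant, map $f$. It is resolved by two dichotomous choices in building the cover --- cutting $A$ out of the neighborhoods around points of $X \setminus A$, so that any clopen piece refining such a neighborhood avoids $A$ entirely, and shrinking the neighborhoods around points of $A$ using the continuity of $f$, so that $f$ varies by at most $2^{-n-1}$ over the trace on $A$. Exact equality $F|_A = f$ is only recovered in the uniform limit.
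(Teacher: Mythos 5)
Your proof is correct. Note, however, that the paper does not prove this statement at all: it quotes it as a known theorem of Michael (citing [Mi1], [Mi2] and [IMi, Proposition 1.4]), so there is no in-paper argument to compare against. What you have written is essentially the classical successive-approximation proof of the zero-dimensional selection theorem, with the extension condition $F|_A=f$ threaded through the induction; all the estimates check out, and the dichotomy in the construction of the cover (excising $A$ from the neighborhoods of points of $X\setminus A$, and shrinking the neighborhoods of points of $A$ via continuity of $f$) correctly forces $\rho(F_{n+1},f)<2^{-n-1}$ on $A$ while keeping $F_{n+1}$ locally constant. Two small remarks. First, the step ``refine $\{W_x\}$ to a locally finite partition into pairwise disjoint clopen sets'' is the assertion that a paracompact ultranormal space is ultraparacompact; it is standard (take a locally finite open refinement, shrink it to a closed cover, fatten each closed piece to a clopen set inside the corresponding open one, and disjointify along a well-ordering, using that a locally finite union of clopen sets is clopen), but it is the one place where both paracompactness and $0$-dimensionality enter and deserves a sentence. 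Second, the route usually taken in the literature is shorter at the cost of a lemma: replace $\phi$ by the map $\psi$ with $\psi(x)=\{f(x)\}$ for $x\in A$ and $\psi(x)=\phi(x)$ otherwise, check that $\psi$ is still lower semi-continuous (this uses $f(x)\in\phi(x)$ on $A$), and apply the selection theorem without the closed set $A$. Your direct adaptation avoids that lemma and is equally valid.
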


By the invariance of the ultra-norms under the addition, 
we have:
\begin{prop}\label{prop:ultHiso}
Let $R$ be a commutative ring, 
and let $(V, d)$ be an ultra-normed $R$-module. 
Let
$x, y\in V$. 
Then for every 
$r\in (0, \infty)$ 
we have 
\[
\mathcal{H}(B(x, r), B(y, r))\le d(x, y), 
\]
where 
$\mathcal{H}$ 
is the Hausdorff distance induced from $d$. 
\end{prop}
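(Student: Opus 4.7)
The plan is to exploit the invariance of the ultrametric $d$ under the addition, guaranteed by Lemma \ref{lem:ultinvariant}, which allows us to realize a distance-preserving translation between the two balls. First I would recall that the Hausdorff distance can be written as
\[
\mathcal{H}(B(x,r), B(y,r)) = \max\Bigl\{\sup_{a \in B(x,r)} \inf_{b \in B(y,r)} d(a, b),\ \sup_{b \in B(y,r)} \inf_{a \in B(x,r)} d(a, b)\Bigr\},
\]
so by the symmetry of the roles of $x$ and $y$ it suffices to prove that for each $a \in B(x, r)$ there exists $b \in B(y, r)$ with $d(a, b) \le d(x, y)$.

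Given $a \in B(x, r)$, I would take $b := a - x + y \in V$. Using invariance of $d$ under the addition (applied to the shift by $-x + y$), I compute $d(b, y) = d(a - x + y, y) = d(a, x) \le r$, so $b \in B(y, r)$. Similarly, $d(a, b) = d(a, a - x + y) = d(x, y)$ (again by the shift by $-a$ combined with the symmetry $\|u\| = \|-u\|$ of the ultra-norm). Taking the infimum over $b \in B(y, r)$ and then the supremum over $a \in B(x, r)$ gives the desired bound on one of the two quantities in the definition of $\mathcal{H}$, and the other follows by interchanging $x$ and $y$.

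There is no substantive obstacle here: the statement is essentially a one-line consequence of the invariance of $d$, and the proof amounts to checking that the translation $a \mapsto a - x + y$ is an isometry mapping $B(x, r)$ bijectively to $B(y, r)$ while moving each point by exactly $d(x, y)$. The only point that requires mild care is that $d$ must be treated as an invariant (ultra)metric, a reformulation we already fixed in Subsection \ref{subsec:inv}; once that identification is in place the argument is purely formal and does not use the strong triangle inequality in an essential way.
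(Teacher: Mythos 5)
Your proof is correct and is essentially the same as the paper's: both use the invariance of $d$ under addition to translate a point $a\in B(x,r)$ to $a-x+y\in B(y,r)$ at distance exactly $d(x,y)$, and then conclude by symmetry. No issues.
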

\begin{proof}
Since  $d$ is invariant  under the addition, 
for every $w\in B(y, r)$
we have 
$x+w-y\in B(x, r)$ 
and 
$d(w, x+w-y)=d(x, y)$. 
Thus, 
$B(y, r)\subseteq B(B(x, r), d(x, y))$. 
Similarly,  
we  obtain 
$B(x, r)\subseteq B(B(y, r), d(x, y))$. 
Therefore, 
$\mathcal{H}(B(x, r), B(y, r))\le d(x, y)$.  
\end{proof}

\begin{cor}\label{cor:ultlsc}
Let $X$ be a topological space,  
Let $R$ be a commutative ring, 
and let $(V, d)$ be an ultra-normed $R$-module. 
Let 
$H:X\to V$ 
be a continuous map and 
$r\in (0, \infty)$. 
Then a map 
$\phi: X\to \clo(V)$ 
defined by 
$\phi(x)=B(H(x), r)$ 
is lower semi-continuous. 
\end{cor}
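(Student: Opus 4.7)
My plan is to verify the defining condition of lower semi-continuity directly: for each open subset $O \subseteq V$, I would show that
\[
W_O := \{\, x \in X \mid \phi(x) \cap O \neq \emptyset\, \}
\]
is open in $X$. Fix an arbitrary $x_0 \in W_O$ and pick a witness point $v_0 \in B(H(x_0), r) \cap O$; since $O$ is open in $V$, choose $\epsilon \in (0, \infty)$ with $U(v_0, \epsilon) \subseteq O$.

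The key step is the translation trick that underlies Proposition \ref{prop:ultHiso}: for any $x$ close enough to $x_0$, translating $v_0$ by $H(x) - H(x_0)$ produces a new witness inside $B(H(x), r) \cap O$. More precisely, by continuity of $H$ there is a neighborhood $W$ of $x_0$ on which $d(H(x), H(x_0)) < \epsilon$, and for each $x \in W$ I would set $v_x := v_0 + H(x) - H(x_0)$. Invariance of $d$ under addition then yields $d(v_x, H(x)) = d(v_0, H(x_0)) \le r$, so $v_x \in B(H(x), r) = \phi(x)$, while $d(v_x, v_0) = d(H(x), H(x_0)) < \epsilon$ places $v_x$ in $U(v_0, \epsilon) \subseteq O$. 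Thus $v_x \in \phi(x) \cap O$, giving $W \subseteq W_O$ and hence openness of $W_O$.

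I do not anticipate a real obstacle here. The statement is essentially a reformulation of Proposition \ref{prop:ultHiso} in the language of lower semi-continuity: the invariance of the ultra-norm under translation forces the closed balls $B(H(x), r)$ to move Hausdorff-continuously with their centers, which together with continuity of $H$ is precisely what lower semi-continuity of $\phi$ amounts to. Alternatively, one can phrase the same argument by invoking Proposition \ref{prop:ultHiso} directly to conclude $B(H(x_0), r) \subseteq B(B(H(x), r), \epsilon)$ on $W$ and noting that this is equivalent to the existence of the witness $v_x$ above.
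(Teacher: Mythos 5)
Your proposal is correct and follows essentially the same route as the paper: the paper fixes a witness $u\in\phi(a)\cap O$ with $U(u,l)\subseteq O$, uses continuity of $H$ together with Proposition \ref{prop:ultHiso} to get $\mathcal{H}(\phi(x),\phi(a))\le d(H(x),H(a))<l$ on a neighborhood, and concludes $\phi(x)\cap O\neq\emptyset$; your version merely inlines the translation argument $v_x=v_0+H(x)-H(x_0)$ that proves Proposition \ref{prop:ultHiso} rather than citing its Hausdorff-distance conclusion, as you yourself note in your closing remark.
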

\begin{proof}
For every open subset $O$ of $V$,  
and for every point 
$a\in X$ 
with 
$\phi(a)\cap O\neq \emptyset$, 
choose  
$u\in \phi(a)\cap O$ 
and  
$l\in (0, \infty)$ 
with 
$U(u, l)\subseteq O$. 
By Proposition \ref{prop:ultHiso} 
and the continuity of $H$, 
we can take 
a neighborhood $N$ of the point $a$ in $X$
such that for every 
 $x\in N$ 
 we have 
 \[
 \mathcal{H}(\phi(x), \phi(a))\le d(H(x), H(a))<l.
 \] 
Then we have 
$\phi(x)\cap U(u, l)\neq \emptyset$, 
and hence  
$\phi(x)\cap O\neq \emptyset$. 
Therefore the set 
$\{\, x\in X\mid \phi(x)\cap O\neq \emptyset\, \}$ 
is open in $X$. 
\end{proof}

The following theorem is known as 
the Stone theorem on the paracompactness, 
which was proven in \cite{St}. 
\begin{thm}\label{thm:Stone}
All metrizable spaces are paracompact. 
\end{thm}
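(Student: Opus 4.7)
The plan is to give Mary Ellen Rudin's elegant proof of Stone's theorem. Let $(X,d)$ be a metric space and let $\mathcal{U} = \{U_\alpha\}_{\alpha \in A}$ be an arbitrary open cover. First, I would well-order the index set $A$. Then, for each $n \in \nn$ and each $\alpha \in A$, I define an open set $V_{\alpha,n}$ by transfinite recursion on $n$ by taking $V_{\alpha,n}$ to be the union of all open balls $U(x, 2^{-n})$ such that the center $x$ satisfies: (i) $\alpha$ is the least index in $A$ with $x \in U_\alpha$; (ii) $x \notin V_{\beta,m}$ for every $\beta \in A$ and every $m < n$; and (iii) $B(x, 3 \cdot 2^{-n}) \subseteq U_\alpha$. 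The family $\{V_{\alpha,n}\}_{\alpha \in A,\, n\in \nn}$ will be the required locally finite open refinement.

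Next, I would verify the three defining properties of a refinement. Each $V_{\alpha,n}$ is manifestly open, and condition (iii) together with the triangle inequality forces $V_{\alpha,n} \subseteq U_\alpha$, so the family refines $\mathcal{U}$. To see it covers $X$, fix $x \in X$, let $\alpha$ be the least index with $x \in U_\alpha$, and choose $n$ so large that $B(x, 3 \cdot 2^{-n}) \subseteq U_\alpha$; then either $x$ lies in some $V_{\beta,m}$ with $m < n$, or conditions (i)--(iii) apply to $x$ itself and put $x$ into $V_{\alpha,n}$.

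The main obstacle is the verification of local finiteness, which is the heart of the proof and requires a careful geometric argument. For a given point $x \in X$, I would let $(\gamma,k)$ be the pair (with $k$ minimal) such that $x \in V_{\gamma,k}$, and consider the ball $U(x, 2^{-(k+1)})$. The argument then splits according to whether $n \le k$ or $n > k$: for $n > k$, condition (ii) applied to any would-be center $y$ with $y \notin V_{\gamma,k}$ rules out intersection once one uses that $x \in V_{\gamma,k}$ together with the appropriate ball inclusions; for $n \le k$, a triangle-inequality argument shows that $U(x, 2^{-(k+1)})$ can meet $V_{\alpha,n}$ for at most one $\alpha$ (any two admissible centers for distinct indices would be forced to be within distance at most $2^{-n+1}$ of each other, contradicting condition (iii) applied to whichever has the larger index). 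Collating both cases bounds the number of indices $(\alpha,n)$ for which $V_{\alpha,n}$ meets $U(x, 2^{-(k+1)})$ by a finite quantity, which yields local finiteness and completes the proof.
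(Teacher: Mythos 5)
The paper does not actually prove Theorem \ref{thm:Stone}: it is the classical Stone paracompactness theorem, quoted with a citation to \cite{St} and used as a black box, so there is no in-paper argument to compare yours against. Your proposal is M.~E.~Rudin's standard short proof of that theorem, which is a perfectly good route: the construction of the sets $V_{\alpha,n}$, the verification that they form an open cover refining $\mathcal{U}$, and the ``at most one $\alpha$ per level'' part of the local finiteness argument are all correct in substance.

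The case $n>k$ of the local finiteness verification, however, has a genuine gap, located exactly where you chose the witnessing neighborhood. From $x\in V_{\gamma,k}$ you only know that $x\in U(z,2^{-k})$ for some admissible center $z$; condition (ii) then gives, for any center $y$ of a ball constituting some $V_{\alpha,n}$ with $n>k$, only the estimate $d(x,y)\ge 2^{-k}-d(x,z)$, and this quantity can be arbitrarily small. In particular such a $y$ may itself lie in $U(x,2^{-(k+1)})$, so nothing in your argument prevents $U(x,2^{-(k+1)})$ from meeting $V_{\alpha,n}$ for infinitely many $n>k$. The repair is the one extra step of Rudin's proof that your sketch omits: since $V_{\gamma,k}$ is open and contains $x$, fix $j\in\nn$ with $U(x,2^{-j})\subseteq V_{\gamma,k}$ and take the witnessing neighborhood to be $U\bigl(x,2^{-(k+j)}\bigr)$ rather than $U(x,2^{-(k+1)})$. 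Then for $n\ge k+j$ every admissible center $y$ at level $n$ satisfies $y\notin V_{\gamma,k}$, hence $d(x,y)\ge 2^{-j}\ge 2^{-(k+j)}+2^{-n}$, so $U(y,2^{-n})$ misses $U\bigl(x,2^{-(k+j)}\bigr)$ and no $V_{\alpha,n}$ meets it; and for each of the finitely many $n<k+j$ your one-index argument applies verbatim, since two points lying in balls attached to distinct indices at the same level $n$ are at distance greater than $2^{-n}$, while $U\bigl(x,2^{-(k+j)}\bigr)$ has diameter at most $2^{-(k+j)+1}\le 2^{-n}$. This bounds the number of pairs $(\alpha,n)$ whose $V_{\alpha,n}$ meets the neighborhood by $k+j-1$ and completes the proof.
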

By Theorem \ref{thm:Stone} and Proposition \ref{prop:ult0}, 
we can apply Theorem \ref{thm:zeroMichael} to all ultrametrizable space.

\subsection{Baire spaces}\label{subsec:Baire}
A topological space $X$ is said to be  \emph{Baire} if the intersection of 
every countable family of  dense open subsets of $X$ is  dense in $X$. 

\par

The following  is  known as the Baire category theorem. 
\begin{thm}
Every completely metrizable space is a Baire space. 
\end{thm}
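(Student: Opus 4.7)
The plan is to run the classical nested-balls argument. Fix a complete metric $d$ on $X$ generating its topology, let $\{U_n\}_{n\in\nn}$ be a countable family of dense open subsets of $X$, and let $V$ be an arbitrary non-empty open subset of $X$. It suffices to show that $V\cap \bigcap_{n\in\nn}U_n\neq \emptyset$.

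Inductively, I would construct a sequence of points $x_n\in X$ and radii $r_n\in (0, 1/n)$ such that the closed balls $B(x_n,r_n;d)$ form a decreasing sequence with $B(x_n,r_n;d)\subseteq V\cap U_1\cap \dots \cap U_n$ for every $n$. For the base step, since $U_1$ is dense and $V$ is non-empty open, $V\cap U_1$ is non-empty open, so one picks $x_1\in V\cap U_1$ together with $r_1\in (0,1)$ so small that $B(x_1,r_1;d)\subseteq V\cap U_1$. For the inductive step, the open ball $U(x_n,r_n;d)$ is non-empty and open, hence meets the dense open set $U_{n+1}$; pick $x_{n+1}$ in this intersection and $r_{n+1}\in (0, 1/(n+1))$ small enough that $B(x_{n+1}, r_{n+1};d)\subseteq U(x_n,r_n;d)\cap U_{n+1}\subseteq B(x_n,r_n;d)\cap U_{n+1}$.

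Because $x_m\in B(x_n,r_n;d)$ whenever $m\ge n$ and $r_n\to 0$, the sequence $\{x_n\}_{n\in\nn}$ is Cauchy in $(X,d)$. Completeness gives a limit $x\in X$, and since each $B(x_n,r_n;d)$ is closed and contains the tail $\{x_m\}_{m\ge n}$, the limit $x$ lies in $B(x_n,r_n;d)$ for every $n$. Therefore $x\in V\cap \bigcap_{n\in\nn}U_n$, which proves density.

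There is no real obstacle here; the proof is entirely standard. The only point requiring a little care is the choice at the inductive step: one must take $r_{n+1}$ strictly smaller than the distance from $x_{n+1}$ to the complement of $U(x_n,r_n;d)\cap U_{n+1}$ (and less than $1/(n+1)$) so that the \emph{closed} ball $B(x_{n+1},r_{n+1};d)$, not merely the open one, sits inside the previous open ball and inside $U_{n+1}$. This guarantees both nesting and membership in the limit.
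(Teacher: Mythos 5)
Your proof is correct: it is the standard nested-closed-balls argument for the Baire category theorem, with the necessary care taken that the closed ball at each stage lies inside the previous open ball. The paper itself states this result without proof as a classical fact (citing it only as "known as the Baire category theorem"), so there is nothing to compare beyond noting that your argument is exactly the textbook one.
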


Since 
$G_{\delta}$ 
subset of completely metrizable space is 
completely metrizable (see, e.g.  \cite[Theorem 24.12]{W}), 
we obtain the following:
\begin{lem}\label{lem:gdelta}
Every 
$G_{\delta}$ 
subset of a completely metrizable space is a Baire space. 
\end{lem}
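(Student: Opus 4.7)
The plan is to combine the two facts recorded immediately before the lemma: first, that every $G_\delta$ subset of a completely metrizable space is itself completely metrizable (this is the cited Theorem 24.12 of Willard), and second, the Baire category theorem just stated, which guarantees that every completely metrizable space is Baire. So the proof is essentially a two-step chaining of these results.

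Concretely, I would start by letting $X$ be a completely metrizable space and $G$ a $G_\delta$ subset of $X$. By the first cited result, there exists a complete metric on $G$ that induces the subspace topology inherited from $X$; in other words, $G$ (with its subspace topology) is completely metrizable in its own right. Then, applying the Baire category theorem directly to the space $G$ yields that $G$ is a Baire space.

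There is no real obstacle here, since both ingredients are quoted as available. The only thing worth emphasizing is the distinction between ``$G$ is Baire'' (a property of $G$ as a topological space on its own) and ``$G$ is non-meager in $X$'', which is a stronger statement; the lemma only asserts the former, and that is exactly what the combination of the two results gives.
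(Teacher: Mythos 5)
Your argument is exactly the one the paper intends: combine the fact that a $G_{\delta}$ subset of a completely metrizable space is itself completely metrizable (Willard, Theorem 24.12) with the Baire category theorem stated just above. This is correct and matches the paper's proof.
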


%%%%%%%%%%%%%%%%%%%%%%%%%%%%%%%%%%
%%%%%%%%%%%%%%%%%%%%%%%%%%%%%%%%%%%%%%%%%%%%%%%%%%%%%%%%%%%%%%%%%%%%%%%%%%%%%%%%%%%%%%%%%%%%%%%%%%%%%%%%%%%%%%%%%%%%%%%%%%%%%%%%%%%%%%%%%%%%%%%%%%%%%%%%%%%%%%%%%%%%%%%%%%%%%%%%%%%%%%%%%%%%%%%%%%%%%%%%%%%%%%%%%%%%%%%%%%%%%%%%%%%%%%%%%%

\section {An embedding theorem of ultrametric spaces }\label{sec:uae}
In this section, we prove Theorem \ref{thm:ultemb}. 
\subsection{Proof of Theorem \ref{thm:ultemb}}\label{subsec:31}
We first discuss general algebraic facts. 
\begin{lem}\label{lem:algind}
Let $R$ be a commutative ring, 
and let $V$ be an $R$-module. 
Let $P$ be an $R$-independent set of $V$, 
and let 
$Q$ be a subset of $P$.
Let $H$ be an $R$-submodule of $V$ 
generated by $Q$. 
Then $H\cap P=Q$.  
\end{lem}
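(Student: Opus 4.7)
The plan is to prove both inclusions of $H \cap P = Q$ directly, with the forward inclusion being the substantive one and resting entirely on the definition of $R$-independence applied to a carefully chosen finite subset.

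First I would handle the easy inclusion $Q \subseteq H \cap P$: by hypothesis $Q \subseteq P$, and since $H$ is the $R$-submodule generated by $Q$, certainly $Q \subseteq H$, giving $Q \subseteq H \cap P$.

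For the reverse inclusion, I would take an arbitrary $p \in H \cap P$ and argue by contradiction, assuming $p \notin Q$. Since $p \in H$ and $H$ is the submodule generated by $Q$, I can write $p = \sum_{i=1}^{n} r_i q_i$ for some $n \ge 0$, with $r_i \in R$ and pairwise distinct $q_i \in Q$. Because $p \notin Q$, the element $p$ is distinct from every $q_i$, so $\{p, q_1, \dots, q_n\}$ is a finite subset of $P$ consisting of pairwise distinct elements. Rearranging yields
\[
1_R \cdot p + \sum_{i=1}^{n} (-r_i) \cdot q_i = 0_V,
\]
which is a linear relation among elements of $P$ whose leading coefficient is $1_R \neq 0_R$. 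This contradicts the $R$-independence of $P$, so $p \in Q$, completing the proof.

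I do not anticipate any serious obstacle: this is a routine transcription of the standard fact that a basis of a submodule is the intersection of that submodule with the ambient basis. The only minor care is to cover the degenerate case $n = 0$ (where $p = 0_V$), which is handled automatically by the same argument since the relation $1_R \cdot p = 0_V$ already violates $R$-independence, and to record that we may take the $q_i$ pairwise distinct before invoking the definition of $R$-independence, which requires distinct generators.
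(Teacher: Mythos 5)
Your proof is correct and follows essentially the same route as the paper: the paper's argument also establishes $Q\subseteq H\cap P$ from the definition of $H$ and then asserts $(P\setminus Q)\cap H=\emptyset$ by $R$-independence, which is exactly the contradiction you spell out via the relation $1_R\cdot p+\sum_{i=1}^{n}(-r_i)\cdot q_i=0_V$. Your version merely makes explicit the detail the paper leaves implicit.
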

\begin{proof}
By the definition of $H$, 
first we have $Q\subseteq H\cap P$. 
Since $P$ is $R$-independent, 
we have $(P\setminus Q)\cap H=\emptyset$. 
Thus every 
$x\in H\cap P$ 
must belong to $Q$, 
and hence 
we conclude that $H\cap P\subseteq Q$. 
\end{proof}

Let $R$ be a commutative ring. 
Let $X$ be a set,  
and let 
$o\not\in X$.
We denote by 
$\mathrm{F}(R, X, o)$ 
the free $R$-module $M$ 
satisfying  that 
\begin{enumerate}
\item $X\sqcup\{o\}\subseteq M$;
\item $o$ is the zero element of $M$;
\item $X$ is an $R$-independent generator of $M$. 
\end{enumerate}
Note that by the construction of free modules, 
$\mathrm{F}(R, X, o)$ uniquely exists up to isomorphism. 

\par
For two sets 
$A$, 
$B$, 
we denote by 
$\map(A, B)$ 
the set of all maps from $A$ into $B$. 
Let $R$ be a commutative ring, 
and let $V$ be an $R$-module. 
Let $E$ be a non-empty set. 
Then the set $\map(E, V)$ becomes an $R$-module with  the
coordinate-wise addition and scalar multiplication. 
Note that the zero element of $\map(E, V)$ is 
the zero function of $\map(E, V)$; namely the constant function valued at
 the zero element of $V$. 
In what follows, 
the set $\map(E, V)$ will be always equipped with this module structure. 
\par

We  next discuss 
a construction of universal ultrametric spaces of Lemin--Lemin type \cite{LL}. 
Let $S$ be a range set. 
Let $M$ be a set,  
and let 
$o\in M$ 
be a fixed base point. 
A map 
$f:S_{+}\to M$ 
is said to be 
\emph{eventually $o$-valued} if 
there exists 
$C\in S_+$ 
such that 
for every 
$q>C$ 
we have 
$f(q)=o$. 
We denote by  
$\elel(S, M, o)$ 
the set of all eventually $o$-valued maps from $S_+$ to $M$.
Define a metric $\Delta$ on $\elel(S, M, o)$ by 
\[
\Delta(f, g)=\sup\{\, q\in S_+\mid f(q)\neq g(q)\, \}. 
\] 
Note that 
$\Delta$ takes values in the closure $\cl(S)$ of $S$ in $[0, \infty)$. 
\par

The next lemma follows from the definitions of 
$\elel(S, M, o)$ and $\Delta$. 
\begin{lem}\label{lem:LLcomp}
For every range set $S$ possessing at least two elements, 
for every set $M$ and for every point $o\in M$, 
the space $(\elel(S, M, o), \Delta)$ is a complete 
$\cl(S)$-valued  ultrametric space. 
\end{lem}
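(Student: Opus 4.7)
The plan is to verify the four claims separately: well-definedness, the strong triangle inequality, the range of $\Delta$, and completeness. Most of the work is bookkeeping on the indexing set $S_+$; the nontrivial point is where the hypothesis $\card(S)\ge 2$ is used.

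First I would check that $\Delta$ is well defined on $\elel(S,M,o)^2$. For $f,g\in\elel(S,M,o)$, both are eventually $o$-valued, so there exists $C\in S_+$ with $f(q)=g(q)=o$ whenever $q>C$. Hence the set $E(f,g):=\{q\in S_+\mid f(q)\neq g(q)\}$ is bounded by $C$, and its supremum exists in $[0,\infty)$ (adopting $\sup\emptyset=0$ when $f=g$). For the ultrametric property, the elementary set-theoretic inclusion $E(f,h)\subseteq E(f,g)\cup E(g,h)$ yields $\Delta(f,h)\le \Delta(f,g)\lor\Delta(g,h)$ upon taking suprema; symmetry is immediate, and $\Delta(f,g)=0$ forces $E(f,g)=\emptyset$, i.e.\ $f=g$.

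Next, for the value set: either $E(f,g)=\emptyset$, giving $\Delta(f,g)=0\in S\subseteq\cl(S)$, or $\Delta(f,g)$ is the supremum of a nonempty bounded subset of $S_+\subseteq S$, hence an element of $\cl(S)$. So $\Delta$ takes values in $\cl(S)$, establishing that $(\elel(S,M,o),\Delta)$ is a $\cl(S)$-valued ultrametric space.

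The main step is completeness. Given a Cauchy sequence $\{f_n\}_{n\in\nn}$, I would first observe that for every fixed $q\in S_+$, choosing $N$ with $\Delta(f_n,f_m)\le q/2$ for $n,m\ge N$ forces $f_n(q)=f_m(q)$ for $n,m\ge N$; hence $\{f_n(q)\}_n$ is eventually constant, so one may define $f(q)$ pointwise as this eventual value, together with $f(q):=o$ for any $q$ where this argument is vacuous. The delicate point — and the place where $\card(S)\ge 2$ enters — is showing $f\in\elel(S,M,o)$: pick any $\epsilon_0\in S_+$ (possible exactly because $S$ has more than just $0$), choose $N$ with $\Delta(f_n,f_N)\le\epsilon_0$ for $n\ge N$, and combine the eventual $o$-valuedness of $f_N$ (say $f_N(q)=o$ for $q>C$) with the implication that $q>\epsilon_0$ forces $f_n(q)=f_N(q)$ for all $n\ge N$; then $f(q)=o$ for $q>C\lor\epsilon_0$. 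Finally, to confirm $\Delta(f_n,f)\to 0$: given $\epsilon>0$, choose $N$ with $\Delta(f_n,f_m)\le\epsilon$ for $n,m\ge N$; for each $q\in S_+$ with $q>\epsilon$ and each $n\ge N$, pick $m\ge N$ large enough so that $f_m(q)=f(q)$, then $f_n(q)\neq f(q)$ would imply $q\le\Delta(f_n,f_m)\le\epsilon$, a contradiction. Thus $E(f_n,f)\subseteq(0,\epsilon]\cap S_+$ and so $\Delta(f_n,f)\le\epsilon$, completing the proof.
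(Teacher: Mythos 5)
Your proof is correct and complete; the paper itself offers no argument for this lemma (it simply asserts that it "follows from the definitions of $\elel(S, M, o)$ and $\Delta$"), and your write-up is exactly the intended standard verification: the inclusion $E(f,h)\subseteq E(f,g)\cup E(g,h)$ for the strong triangle inequality, suprema of bounded subsets of $S$ for the value set, and pointwise eventual constancy of Cauchy sequences for completeness. Your identification of where $\card(S)\ge 2$ enters (to produce $\epsilon_0\in S_+$ so that the limit is again eventually $o$-valued) is also the right observation, though in the degenerate case $S=\{0\}$ the space $\elel(S,M,o)$ is empty and the statement is vacuous.
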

In the next theorem, 
we review  the Lemin--Lemin construction \cite{LL} of  
embeddings  into their universal spaces in order   to 
obtain  more detailed  information  of their construction. 
\begin{thm}\label{thm:LL0}
Let $S$ be a range set possessing at least two elements. 
Let $(X\sqcup \{o\}, d)$ be an 
$S$-valued  ultrametric space with 
$o\not\in X$. 
Let $K$ be a set with 
$X\sqcup\{o\} \subseteq K$.  
Then there exists an isometric embedding 
$L: X\sqcup\{o\}\to \elel(S, K, o)$ such that 
%%%%%%%%%%%%%%%MMM
\begin{enumerate}
\item for every $q\in S_+$ we have $L(o)(q)=o$;
\item for every $x\in X$ the function  $L(x)$ is valued in $X\sqcup\{o\}$; 
\item for all $x, y\in X$ we have 
\[
(0, d(x, y)]\cap S_{+}=\{\, q\in S_+\mid L(x)(q)\neq L(y)(q)\, \}. 
\]
\end{enumerate}
\end{thm}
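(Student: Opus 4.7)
The plan is to build $L$ from a transversal of the ball partition at each scale. For each $q\in S_{+}$, the strong triangle inequality forces the collection of open balls $\{U(x,q;d):x\in X\sqcup\{o\}\}$ to be a partition of $X\sqcup\{o\}$: two such balls either coincide (when $d(x,y)<q$) or are disjoint (when $d(x,y)\ge q$). I invoke the axiom of choice to pick, for each $q\in S_{+}$, a transversal $r_{q}$ of this partition, with the single additional stipulation that $r_{q}(U(o,q;d))=o$. Then I set $L(x)(q)=r_{q}(U(x,q;d))$ for $x\in X\sqcup\{o\}$ and $q\in S_{+}$.

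The verification proceeds in short steps. Condition (1) is the normalization enforced on $r_{q}$, and condition (2) is immediate from $r_{q}(B)\in B\subseteq X\sqcup\{o\}$. For (3), for $q\in S_{+}$ the equality $L(x)(q)=L(y)(q)$ is equivalent to $U(x,q;d)=U(y,q;d)$, which is in turn equivalent to $y\in U(x,q;d)$, i.e., $d(x,y)<q$; complementing yields
\[
\{\,q\in S_{+}\mid L(x)(q)\neq L(y)(q)\,\}=\{\,q\in S_{+}\mid q\le d(x,y)\,\}=(0,d(x,y)]\cap S_{+}.
\]
To see $L(x)\in\elel(S,K,o)$, observe that when $x\neq o$ we have $C:=d(x,o)\in S_{+}$, and for every $q>C$ the point $x$ lies in $U(o,q;d)$, whence $L(x)(q)=r_{q}(U(o,q;d))=o$; when $x=o$ the map $L(o)$ is identically $o$ on $S_{+}$, and any $C\in S_{+}$ (which exists because $\operatorname{card}(S)\ge 2$) witnesses the eventually $o$-valued condition. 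Finally, the isometry identity $\Delta(L(x),L(y))=d(x,y)$ is extracted from (3): if $x=y$ both sides vanish, and if $x\neq y$ then $d(x,y)\in S_{+}$ lies in $(0,d(x,y)]\cap S_{+}$ and is clearly its supremum.

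The argument is essentially bookkeeping once the right definition is in hand, so the only conceptual hurdle is recognizing that the open balls of a fixed radius truly partition an ultrametric space, which is exactly what converts the transversal choice into an \emph{isometric} rather than merely continuous map. A minor technicality is to force $o$ to be selected as the representative of its own ball at every radius so that (1) holds; this is handled by choosing the transversal $r_{q}$ for the $o$-ball first and arbitrarily on the remaining classes.
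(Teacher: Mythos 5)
Your proof is correct, and it takes a genuinely different route from the paper's. The paper follows the Lemin--Lemin construction: it well-orders $X\sqcup\{o\}$ as $\{x(\alpha)\}_{\alpha<\kappa}$ and defines $L(x(\gamma))$ by transfinite recursion, splitting into two cases according to whether $D_{\gamma}=\inf_{\alpha<\gamma}d(x(\alpha),x(\gamma))$ is attained, and then verifies (3) by a second transfinite induction using Lemma \ref{lem:isosceles}. Your construction is instead global and choice-of-transversal based: $L(x)(q)$ records the representative of the open ball $U(x,q;d)$, which works because at each scale $q$ the open $q$-balls partition an ultrametric space. This is shorter and makes the meaning of $L(x)(q)$ transparent (it encodes the $q$-ball of $x$), and your verification of (1)--(3), of the eventually-$o$-valued condition via $C=d(x,o)$, and of the isometry identity are all sound; note also that your argument gives (3) for all pairs in $X\sqcup\{o\}$, not just in $X$, which is what the isometry claim and the later Lemmas \ref{lem:ultindependent} and \ref{lem:ultgenmod} actually need. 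The one feature of the paper's recursive construction that your version does not reproduce is the finer control that $L(x(\alpha))$ takes values only in $\{\,x(\beta)\mid\beta\le\alpha\,\}$; this is used inside the paper's own induction but is not required by the statement or by any subsequent result, so your construction is a valid drop-in replacement.
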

\begin{proof}
Let 
$X\sqcup\{o\}=\{x(\alpha)\}_{\alpha<\kappa}$ 
be an injective index with $x(0)=o$, 
where $\kappa$ is a cardinal. 
By following the Lemin--Lemin's way \cite{LL}, 
we construct an isometric embedding 
$L: X\to \elel(S, K, o)$ by transfinite induction. 
Put $L(x(0))= o$. 
Let $\gamma <\kappa$. 
Assume that 
an isometric embedding 
$L: \{\, x(\alpha)\mid \alpha<\gamma\, \}\to \elel(S, K,  o)$ is already defined. 
Set
$D_{\gamma}=\inf\{\, d(x(\alpha), x(\gamma))\mid \alpha<\gamma\, \}$. 
\par

Case 1. 
(There exists an ordinal
$\beta<\gamma$ 
with 
$D_{\gamma}=d(x(\beta), x(\gamma))$. ~) 
We define an eventually $o$-valued map 
$L(x(\gamma)): S_{+}\to K$ 
by 
\[
L(x(\gamma))(q)=
\begin{cases}
x(\gamma) & \text{if $q\in (0, D_{\gamma}]$; }\\
L(x(\beta))(q) & \text{ if $q\in (D_{\gamma}, \infty)$. }
\end{cases}
\]
\par

Case 2. 
(No ordinal 
$\beta<\gamma$ 
satisfies 
$D_{\gamma}=d(x(\beta), x(\gamma))$. ~)
Take a sequence 
$\{\alpha_n\}_{n\in \nn}$ 
with 
$\alpha_n<\gamma$ and 
$d(x(\alpha_n), x(\gamma))<D_{\gamma}+1/n$ 
for all 
$n\in \nn$. 
We define an eventually $o$-valued map 
$L(x(\gamma)): S_{+}\to K$ by
\[
L(x(\gamma))(q)=
\begin{cases}
x(\gamma) & \text{if $q\in (0, D_{\gamma}]$; }\\
L(x(\alpha_n))(q) & \text{if $D_{\gamma}+1/n<q$.}
\end{cases}
\]
Similarly to \cite{LL}, 
the map 
$L: X\sqcup \{o\}\to \elel(S, K, o)$ is  well-defined and isometric, 
and 
we see that the conditions (1) and (2) are satisfied.  
\par

We now prove that the condition (3) is satisfied. 
Note that for each 
$\alpha<\kappa$, 
the function 
$L(x(\alpha))$ is valued in 
$\{\, x(\beta)\mid\beta\le \alpha\, \}$. 
Let $\gamma<\kappa$.  
Assume that for all 
$\alpha, \beta<\gamma$, 
the condition (3) is satisfied for 
$x=x(\alpha)$ 
and 
$y=x(\beta)$. 
We prove that for every $\alpha<\gamma$, 
the condition (3) is satisfied 
for $x=x(\alpha)$ and $y=x(\gamma)$. 
\par

In Case 1, 
by the definition of 
$D_{\gamma}$ 
and 
$\beta<\gamma$, 
we have 
$d(x(\beta), x(\gamma))\le d(x(\alpha), x(\gamma))$. 
From this inequality and the strong triangle inequality 
(or Lemma \ref{lem:isosceles}), 
it follows that
$d(x(\alpha), x(\beta))\le d(x(\alpha), x(\gamma))$. 
Thus, 
by the hypothesis of transfinite induction 
and the definition of 
$L(x(\gamma))$, 
we conclude that the condition (3) is satisfied. 

In Case 2, 
by the definition of 
$D_{\gamma}$, 
we have 
$D_{\gamma}<d(x(\alpha), x(\gamma))$, 
and 
for all sufficiently large 
$n\in \nn$,  
we obtain 
$d(x(\alpha_n), x(\gamma))<d(x(\alpha), x(\gamma))$.  
 Lemma \ref{lem:isosceles} implies that 
 $d(x(\alpha_n), x(\alpha))=d(x(\alpha), x(\gamma))$. 
Since on the set 
$(D_{\gamma}+1/n, d(x(\alpha), x(\alpha_n))]$ 
the function 
$L(x(\gamma))$ 
coincides with 
$L(x(\alpha_n))$, 
by the hypothesis of transfinite induction
we have 
\[
(D_{\gamma}+1/n, d(x(\alpha), x(\gamma))]\cap S_+
\subseteq 
\{\, q\in S_+\mid L(x(\alpha))(q)\neq L(x(\gamma))(q)\, \}. 
\]
By 
$L(x(\alpha))(S_+)\subseteq \{\, x(\beta)\mid\beta\le \alpha\, \}$ 
and $L(x(\gamma))|_{(0, D_{\gamma}]}= x(\gamma)$, 
we also have 
\[
(0, D_{\gamma}]\cap S_+
\subseteq
 \{\, q\in S_+\mid L(x(\alpha))(q)\neq L(x(\gamma))(q)\, \}. 
\]
These imply the condition (3) for $x=x(\alpha)$ and $y=x(\gamma)$. 
\end{proof}

\begin{rmk}
An ultrametric space $X$ is said to be  
\emph{universal} for a class of ultrametric space if 
every ultrametric space in the class is isometrically embeddable into $X$. 
In \cite{LL}, for each cardinal $\tau$,  
Lemin and Lemin constructed 
a universal ultrametric space for the class of all ultrametric spaces of topological weight $\tau$. 
There are several studies on universal ultrametric spaces. 
Vaughan \cite{Vau99} studied 
 universal ultrametric spaces of Lemin--Lemin-type. 
Vestfrid \cite{V94}, Gao and Shao \cite{GS11},  
and Wan \cite{Wan} studied 
  universal ultrametric spaces of Urysohn-type. 
\end{rmk}

The following lemma plays  
a central role in the  construction  of our   embeddings
from  ultrametric spaces into ultra-normed modules. 
\begin{lem}\label{lem:ultcood}
Let $S$ be a range set possessing at least two elements. 
Let $R$ be a commutative ring. 
Let $M$ be  an $R$-module. 
Let $0_R$ and  $0_M$ denote the zero elements of $R$ and  $M$, respectively. 
Then the following statements hold true: 
\begin{enumerate}
\item The space $\elel(S, M, 0_M)$ becomes an $R$-submodule of 
$\map(S_+, M)$. 
\item The ultrametric $\Delta$ on $\elel(S, M, 0_M)$ is invariant under the addition;
namely, $(\elel(S, M, 0_M), \Delta)$ is  ultra-normed.
\item If $R$ is an integral domain, %%%%%%%%%%%%%%MM
and  if $M$ is torsion-free, then for every  $r\in R\setminus\{0_R\}$, and 
for every  $x\in \elel(S, M, 0_M)$, 
the equality  $\Delta(r\cdot x, 0_{\elel})=\Delta(x, 0_{\elel})$ holds true, 
where $0_{\elel} \in \elel(S, M, 0_M)$ is the constant map valued at $0_M$. 
\end{enumerate}
\end{lem}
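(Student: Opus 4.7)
The plan is to handle the three assertions separately; parts (1) and (2) follow directly from the definitions, and part (3) reduces to a brief appeal to the torsion-free hypothesis.

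For part (1), I would verify closure under the coordinate-wise operations inherited from $\map(S_+, M)$. If $f, g \in \elel(S, M, 0_M)$ are eventually $0_M$-valued past thresholds $C_f, C_g \in S_+$, respectively, then $f + g$ is eventually $0_M$-valued past $C_f \lor C_g$, while for any $r \in R$ the map $r \cdot f$ is eventually $0_M$-valued past $C_f$, since $r \cdot 0_M = 0_M$. Combined with the obvious membership $0_{\elel} \in \elel(S, M, 0_M)$, this makes $\elel(S, M, 0_M)$ an $R$-submodule of $\map(S_+, M)$.

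For part (2), the key observation is cancellation in the abelian group $M$: for any $h \in \elel(S, M, 0_M)$ and every $q \in S_+$,
\[
(f+h)(q) \neq (g+h)(q) \iff f(q) + h(q) \neq g(q) + h(q) \iff f(q) \neq g(q).
\]
Hence the sets $\{\, q \in S_+ \mid (f+h)(q) \neq (g+h)(q)\, \}$ and $\{\, q \in S_+ \mid f(q) \neq g(q)\, \}$ coincide, and taking suprema gives $\Delta(f+h, g+h) = \Delta(f, g)$. Invariance of $\Delta$ under the addition, together with Lemma~\ref{lem:ultinvariant}, then exhibits $(\elel(S, M, 0_M), \Delta)$ as an ultra-normed $R$-module. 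For part (3), the hypotheses that $R$ is an integral domain and $M$ is torsion-free guarantee that, whenever $r \in R \setminus \{0_R\}$, the equation $r \cdot m = 0_M$ forces $m = 0_M$. Applying this coordinate-wise, for every $q \in S_+$ one has $(r \cdot x)(q) = 0_M$ if and only if $x(q) = 0_M$, so the supports agree:
\[
\{\, q \in S_+ \mid (r \cdot x)(q) \neq 0_M\, \} = \{\, q \in S_+ \mid x(q) \neq 0_M\, \}.
\]
Taking suprema yields $\Delta(r \cdot x, 0_{\elel}) = \Delta(x, 0_{\elel})$, as required.

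No step presents a genuine conceptual obstacle; the entire lemma is bookkeeping, checking that the coordinate-wise module structure on $\map(S_+, M)$ restricts to $\elel(S, M, 0_M)$ and that the metric $\Delta$ respects that structure. The only place the algebraic hypotheses do real work is in part (3): the combination \emph{integral domain} plus \emph{torsion-free module} is precisely what prevents scalar multiplication by a nonzero ring element from collapsing any coordinate, a property that will be needed when Lemma~\ref{lem:ultcood} is invoked in the proof of Theorem~\ref{thm:ultemb}, in particular to apply Lemma~\ref{lem:compmod} to the completion step.
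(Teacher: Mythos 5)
Your proof is correct and follows essentially the same route as the paper: part (1) is the routine closure check, part (2) is the cancellation argument showing the difference sets $\{\, q\in S_+\mid (f+h)(q)\neq (g+h)(q)\,\}$ and $\{\, q\in S_+\mid f(q)\neq g(q)\,\}$ coincide, and part (3) applies the same support-comparison using that a nonzero scalar cannot annihilate a nonzero coordinate in a torsion-free module over an integral domain. The paper's proof is merely terser; no substantive difference.
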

\begin{proof}
The statement (1) follows from 
$\elel(S, M, 0_M)\subseteq \map(S_+, M)$ 
and the definition of  eventually $0_M$-valued maps. 
We prove the statement  (2). 
For all 
$f, g, h\in \elel(S, M, 0_M)$, 
and for every $q\in S_+$, 
we have 
$f(q)\neq g(q)$ 
if and only if  
$f(q)+h(q)\neq g(q)+h(q)$. 
Thus, by the definition,  
$\Delta$ is invariant under the addition. 
By the similar way, 
since $R$ is an integral domain, 
we see that the statement (3) holds true. 
\end{proof}

\begin{lem}\label{lem:ultindependent}
Let $R$ be a commutative ring. 
Let $S$ be a range set possessing at least two elements. 
Let $(X\sqcup\{o\}, D)$ be an 
$S$-valued  ultrametric space with  
$o\not\in X$. 
Put $M=\mathrm{F}(R, X, o)$.  
Let 
$L: X\sqcup \{o\}\to \elel(S, M, o)$ 
be an isometric embedding 
constructed in Theorem \ref{thm:LL0}. 
Then  
 $L(X)$ 
 is 
 $R$-independent in the 
 $R$-module 
 $\elel(S, M, o)$. 
\end{lem}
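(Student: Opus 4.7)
The plan is to evaluate the assumed $R$-linear relation at a single carefully chosen coordinate $q\in S_{+}$, reducing the problem to the tautological $R$-independence of $X$ inside the free module $M=\mathrm{F}(R,X,o)$; property (3) of Theorem \ref{thm:LL0} will guarantee that at that coordinate the relevant values remain pairwise distinct.

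I would consider a putative relation $\sum_{i=1}^{n} N_i L(x(\gamma_i))=0_{\elel}$ with pairwise distinct $x(\gamma_1),\ldots,x(\gamma_n)\in X$ and coefficients $N_1,\ldots,N_n\in R$, aiming to show $N_i=0_R$ for every $i$. For $n\ge 2$ the key is to set
\[
\mu=\min_{i\neq j}D(x(\gamma_i),x(\gamma_j)).
\]
Since the minimum runs over a finite set it is attained, and so coincides with some actual distance $D(x(\gamma_{i_{*}}),x(\gamma_{j_{*}}))\in S$; the distinctness of the $x(\gamma_i)$ forces $\mu>0$, whence $\mu\in S_{+}$. Applying property (3) of Theorem \ref{thm:LL0} pairwise then shows that the $n$ values $L(x(\gamma_1))(\mu),\ldots,L(x(\gamma_n))(\mu)$ are pairwise distinct elements of $X\sqcup\{o\}$, and evaluating the relation at $q=\mu$ will produce
\[
\sum_{i=1}^{n} N_i\, L(x(\gamma_i))(\mu)=0_M \quad\text{inside } M.
\]

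At most one of these pairwise distinct summands can equal $o$, and the remaining ones are distinct members of the $R$-independent generating set $X$; the $R$-independence of $X$ in $\mathrm{F}(R,X,o)$ therefore forces $N_i=0_R$ for every $i$ with $L(x(\gamma_i))(\mu)\neq o$. If one exceptional index $i_0$ satisfies $L(x(\gamma_{i_0}))(\mu)=o$, the relation collapses to $N_{i_0}L(x(\gamma_{i_0}))=0_{\elel}$; since $L$ is isometric and $x(\gamma_{i_0})\neq o$, the function $L(x(\gamma_{i_0}))$ is not identically $o$, so evaluating at any $q'\in S_{+}$ at which it takes a value in $X$ will force $N_{i_0}=0_R$ by the same appeal to the $R$-independence of $X$, and this end-game also handles the degenerate case $n=1$. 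The only point requiring genuine verification along the way is the membership $\mu\in S_{+}$; once this is in hand the proof is essentially a direct combination of property (3) of Theorem \ref{thm:LL0} with the defining universal property of the free module $\mathrm{F}(R,X,o)$.
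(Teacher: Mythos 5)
Your proof is correct and follows essentially the same strategy as the paper: evaluate the putative relation at a single coordinate given by a minimum of pairwise distances, observe via property (3) of Theorem \ref{thm:LL0} that the resulting values in $M=\mathrm{F}(R,X,o)$ are pairwise distinct, and invoke the freeness of $M$. The only difference is that the paper takes the minimum over $C\sqcup\{o\}$ rather than over $C$ alone, which guarantees that \emph{every} value $L(x_i)(c)$ is a nonzero element of $X$ at that coordinate and thereby avoids your extra end-game for the one possible index with $L(x(\gamma_{i_0}))(\mu)=o$; your handling of that case is nevertheless valid.
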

\begin{proof}
In this proof,  
we denote by 
$0_M$ 
the zero element $o$ of $M$.
Let $C=\{ x_1, \dots, x_{n} \}$ be an arbitrary finite subset of $X$. 
Assume that
\[
\sum_{i=1}^{n}N_i\cdot L(x_i)=0_{\elel}, 
\]
where 
$N_i\in R$ 
for all $i\in \{1, \dots, n\}$ 
and 
$0_{\elel}$ 
stands for  the zero function of 
$\elel(S, M. o)$. 
Put 
\begin{align*}
c&=
\min\{\,\Delta(L(x), L(y))\mid \text{$x, y\in C\sqcup\{o\}$ and $x\neq y$\,}\}. 
\end{align*}
Since $L$ is isometric, we have 
$c\in S_+$. 
By the definition of 
$\Delta$ 
and the conditions (1) and (3) stated in  Theorem \ref{thm:LL0}, 
we see that for all 
$i, j\in \{1, \dots, n\}$ 
we have  $L(x_i)(c)\neq L(x_j)(c)$,  
and for each $i\in \{1, \dots, n\}$  we have 
$L(x_i)(c)\neq 0_M$. 
By the definition of 
$\mathrm{F}(R, X, o)$, 
we see that  the set 
$\{ L(x_1)(c), \dots, L(x_n)(c)\}$ 
is 
$R$-independent in $M$. 
Since 
\[
\sum_{i=1}^{n}N_i\cdot L(x_i)(c)=0_M, 
\]
we have $N_i=0_R$ for all $i\in \{1, \dots, n\}$, 
where $0_R$ is the zero element of $R$. 
Thus 
$\{ L(x_1), \dots, L(x_n) \}$ 
is 
$R$-independent in $\elel(S, M, o)$. 
Since the set  $C=\{ x_1, \dots, x_{n} \}$ 
is arbitrary,  
we see that 
$L(X)$ is 
$R$-independent in the module 
$\elel(S, M, o)$. 
\end{proof}

\begin{lem}\label{lem:ultgenmod}
Let $R$ be a commutative ring. 
Let $S$ be a range set possessing at least two elements. 
Let 
$(X\sqcup\{o\}, D)$ 
be an 
$S$-valued  ultrametric space with  
$o\not\in X$. 
Put 
$M=\mathrm{F}(R, X, o)$.  
Let 
$L: X\sqcup \{o\}\to \elel(S, M, o)$ 
be an isometric embedding 
constructed in Theorem \ref{thm:LL0}. 
Let $Q$ be an $R$-submodule of 
$\elel(S, M, o)$ 
generated by 
$L(X)$. 
Then  
the metric 
$\Delta|_{Q^2}$ 
takes values in the range set  $S$. 
\end{lem}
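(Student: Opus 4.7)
The plan is to use the invariance of $\Delta$ under addition (Lemma \ref{lem:ultcood}(2)) together with the fact that $Q$ is an $R$-submodule to reduce the claim to showing $\Delta(h, 0_{\elel}) \in S$ for every $h \in Q$, where $0_{\elel}$ denotes the constant map valued at $o$. For such an $h$, fix an expansion $h = \sum_{i=1}^{n} N_i \cdot L(x_i)$ with $x_i \in X$ and $N_i \in R$, and form the finite subset
\[
T = \{\, D(x_i, x_j) \mid 1 \le i, j \le n\, \} \cup \{\, D(x_i, o) \mid 1 \le i \le n\, \} \subseteq S,
\]
which lies in $S$ because $D$ is $S$-valued.

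The combinatorial heart of the argument is the assertion that, for $q \in S_{+}$, whether $h(q)$ equals $0_M$ depends only on the comparison pattern $\{\, t \in T \mid q \le t\, \}$. By condition (3) of Theorem \ref{thm:LL0}, the equivalence relation on $\{1, \dots, n\}$ given by $i \sim_q j \iff L(x_i)(q) = L(x_j)(q)$ is determined by which of the $D(x_i, x_j)$ are $\ge q$; a parallel statement, obtained from the same construction applied to the pair $(x_i, o)$, shows that $L(x_i)(q) = o$ iff $q > D(x_i, o)$. Since $M = \mathrm{F}(R, X, o)$ has $X$ as an $R$-independent generating set, the identity $h(q) = 0_M$ amounts to a system of $R$-linear conditions on the $N_i$'s whose shape is dictated only by this partition and by which classes are sent to $o$. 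Enumerating $T \cup \{0\}$ as $0 = t_0 < t_1 < \cdots < t_m$, it follows that $A := \{\, q \in S_{+} \mid h(q) \ne 0_M\, \}$ is a union of some of the half-open slabs $(t_{j-1}, t_j] \cap S_{+}$; the rightmost slab $(t_m, \infty) \cap S_{+}$ is excluded, because each $L(x_i)$ vanishes for $q > D(x_i, o) \le t_m$.

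Each non-empty slab $(t_{j-1}, t_j] \cap S_{+}$ contains its right endpoint $t_j \in S_{+}$, so the supremum $\sup A$ is attained and equals some $t_j \in T \subseteq S$ whenever $A$ is non-empty; if $A$ is empty, then $\Delta(h, 0_{\elel}) = 0 \in S$ by definition. In either case $\Delta(h, 0_{\elel}) \in S$, which is what we need. The step I expect to require the most care is the claim that $L(x_i)(q) = o$ iff $q > D(x_i, o)$, since Theorem \ref{thm:LL0}(3) is literally stated only for pairs $x, y \in X$; however, the transfinite construction in its proof treats $o = x(0)$ uniformly with the other $x(\alpha)$, so the same induction---or, alternatively, the isometry $\Delta(L(x_i), L(o)) = D(x_i, o)$ together with the step-function structure of $L(x_i)$ produced by that construction---yields the required characterization.
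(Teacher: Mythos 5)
Your proof is correct and follows essentially the same route as the paper's: both decompose $S_+$ into the half-open intervals cut out by the finitely many distances among $x_1,\dots,x_n,o$, use conditions (2)--(3) of Theorem \ref{thm:LL0} to see that the coincidence/vanishing pattern of the $L(x_i)$ --- hence, by the $R$-independence of $X$ in the free module $M$, whether $h(q)=0_M$ --- is constant on each interval, and conclude that the supremum defining $\Delta(h,0_{\elel})$ is attained at a right endpoint, which lies in $S$. The point you flag (extending Theorem \ref{thm:LL0}(3) to pairs $(x_i,o)$) is exactly the fact underlying the paper's properties (A) and (B) and does follow from the construction, since $o=x(0)$ and $L(o)$ is the constant $o$-valued map; your direct ``union of slabs'' formulation merely replaces the paper's argument by contradiction via maximal $R$-independent subsets on each interval.
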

\begin{proof}
In this proof,  we denote by 
$0_M$ the zero element $o$ of $M$. 
Let $0_R$ denote the zero element of $R$. 
\par

By the invariance of $\Delta$ under the addition,  
it suffices to show that for every 
$x\in Q$ we have $\Delta(x, 0_{\elel})\in S$, 
where $0_{\elel}$ is the zero function in
$\elel(S, M, o)$. 
Take 
$x\in Q$. 
Then there exist a finite subset 
$\{ x_1, \dots, x_n \}$ 
of $X$ and 
a finite subset 
$\{N_1, \dots, N_n\}$ of 
$ R\setminus\{0_R\}$ 
such that 
$x=\sum_{i=1}^nN_iL(x_i)$. 
Let 
$p_0, p_1, \dots, p_k$ 
be a sequence in $S$ such that  
\begin{enumerate}
\item $p_0=0$; 
\item $p_j<p_{j+1}$ for all $j$; 
\item $\{\, d(x_i, 0_M)\mid i=1, \dots, n\, \}\cup \{\, d(x_i, x_j)\mid i\neq j\, \}
=\{ p_1, \dots, p_k\}$. 
\end{enumerate}
For 
$j\in \{ 0, \dots, k-1 \}$, 
we put 
$I(j)=(p_j, p_{j+1}]\cap S$, 
 and we put  
 $I(k)=(p_k, \infty)\cap S$. 
By the definition of 
$\{p_j\}_{j=0}^{k}$, 
and by the properties (2) and  (3) of the map 
$L$ stated in Theorem \ref{thm:LL0},  we obtain:
\begin{enumerate}
\renewcommand{\labelenumi}{(\Alph{enumi})}
\item for all 
$a\in \{1, \dots, n \}$ 
we have 
$L(x_a)=0_M$ 
on $I(k)$;
\item for every  
$a\in \{1, \dots, n \}$, 
and  for every 
$j\in \{ 0, \dots, k\}$, 
if there exists  $c\in I(j)$ with 
$L(x_a)(c)=0_M$, 
then  we have 
$L(x_a)=0_{M}$ on $I(j)$; 
\item for all 
$a, b\in \{1, \dots, n \}$, 
and for every 
$j\in \{ 0, \dots, k \}$, 
if there exists  
$c\in I(j)$ 
with 
$L(x_a)(c)= L(x_b)(c)$, 
then we have 
$L(x_a)=L(x_b)$ on $I(j)$. 
\end{enumerate}
Suppose that 
$\Delta(x, 0_{\elel})\not \in S$. 
By the property (A), 
we can take 
$j\in \{ 0, \dots, k-1\}$ 
such that 
$\Delta(x, 0_{\elel})\in I(j)$. 
By the definition of $\Delta$, 
there exists  $p\in I(j)$ with 
$x(p)\neq 0_M$, and  we see that 
$x(p_{j+1})=0_M$. 
Put $q=p_{j+1}$. 
Take a subset 
$\{y_1, \dots, y_m \}$ of 
$\{ x_1, \dots, x_n \}$ such that 
\begin{enumerate}
\renewcommand{\labelenumi}{(\alph{enumi})}
\item\label{item:a} $L(y_1)(q), \dots, L(y_m)(q)$ 
are not equal to the zero element 
$0_M$ of $M$, 
and they are different to each other;
\item $m$ is   maximal  in cardinals  of all subsets of  
the set 
$\{x_1, \dots, x_n\}$ 
satisfying the property (a). 
\end{enumerate}
By the properties (B) and (C), 
the set 
$\{ L(y_1)|_{I(j)}, \dots, L(y_m)|_{I(j)}\}$
 is a maximal $R$-independent subset  of 
 $\{ L(x_1)|_{I(j)}, \dots, L(x_n)|_{I(j)}\}$
  in the $R$-module 
$\map(I(j), M)$. 
Then there exists a subset 
$\{ C_1, \dots, C_{m} \}$ 
of 
$R$ such that 
\[
x|_{I(j)}=\sum_{l=1}^{m}C_{l}L(y_l)|_{I(j)}. 
\]
Since $x(q)=0_M$, 
we have 
\begin{align*}\label{al:ind}
\sum_{l=1}^{m}C_{l}L(y_l)(q)=0_M. 
\end{align*}
Since  
$\{ L(y_1)(q), \dots, L(y_m)(q)\}$ is a subset of $X$, 
it is  
$R$-independent in $M$. 
Then we have $C_l=0_R$ for all 
$l\in  \{ 1, \dots, m \}$, 
and hence
$x=0_M$ on $I(j)$.
This contradicts the existence of 
$p\in I(j)$ with $x(p)\neq 0_M$.  
Therefore, 
$\Delta(x, 0_{\elel})\in S$. 
This completes the proof. 
\end{proof}

Before proving  Theorem \ref{thm:ultemb}, 
we recall that 
a  free module on an integral domain is always  torsion-free. 
\begin{proof}[Proof of Theorem \ref{thm:ultemb}]
Let $S$ be a range set possessing at least two elements. 
Let $R$ be an integral domain, and let  
$(X, d)$ be an ultrametric space. 
\par

We first deal with the case where $(X, d)$ is complete. 
Take  $o\not \in X$. 
Put $M=\mathrm{F}(R, X, o)$. 
Let $(X\sqcup\{o\}, D)$ 
be a one-point extension of $(X, d)$ 
(see Corollary \ref{cor:ultonept}). 
Let 
$L: (X\sqcup\{o\}, D)\to (\elel(S, M, o), \Delta)$ 
be an isometric embedding stated in 
Theorem \ref{thm:LL0}. 
Let 
$Q$ 
be an $R$-submodule of 
$\elel(S, M, o)$ generated by 
$L(X)$, 
and let 
$(V, \Xi)$ be the completion of 
$(Q, \Delta|_{Q^2})$. 
By Lemmas \ref{lem:compmod} and \ref{lem:ultgenmod}, 
and Proposition \ref{prop:ultcompletion}, 
the space 
$(V, \Xi)$ 
is an $S$-valued ultra-normed $R$-module. 
Since complete metric  subspaces are closed in metric spaces, 
  Lemma \ref{lem:ultindependent} implies that 
   $(V, \Xi)$ and $L: (X, d)\to (V, \Xi)$ satisfy the conditions (1) and (2)
    stated in Theorem \ref{thm:ultemb}. 
 Moreover, the latter part of the theorem is also proven. 
\par

In the case where  
$(X, d)$ is not complete, 
let $(Y, e)$ be the completion of $(X, d)$. 
As in the above, we can take 
an ultra-normed $R$-module 
$(W, D)$ and 
an isometric embedding 
$I: Y\to W$  satisfying 
the conditions (1) and (2) in Theorem \ref{thm:ultemb}. 
Let $H$ be an $R$-submodule  of $W$ generated  by $I(X)$. 
Since $I(Y)$ is $R$-independent, 
 Lemma \ref{lem:algind} yields 
 $H\cap I(Y)=I(X)$. 
Thus $I(X)$ is closed in $H$,  
and hence 
$(H, D|_{H^2})$ and $I$ are  desired ones.  
This completes the proof of Theorem \ref{thm:ultemb}. 
\end{proof}

\begin{rmk}
If a range set $S$ is  closed under the supremum operator, 
then 
we can prove Theorem \ref{thm:ultemb} for 
not only an integral domain but also a commutative ring $R$. 
In this case, 
the space 
$(\elel(S, M, o), \Delta)$ 
is an $S$-valued ultrametric space, 
and 
 in the proof of Theorem \ref{thm:ultemb}, 
 we can use the space 
 $(\elel(S, M, o), \Delta)$ 
 instead of the space 
 $(V, \Xi)$. 
\end{rmk}

\subsection{Ultrametrics taking values in general totally ordered sets}\label{subsec:genult}
We say that an ordered set is 
\emph{bottomed} if it  has a least element. 
Let $(T, \le_T)$ be a bottomed totally ordered set. 
Let $X$ be a set. 
A function 
$d: X\times X\to T$  is said to be a 
\emph{$(T, \le_T)$-valued ultrametric on $X$} 
if the following are satisfied:
\begin{enumerate}
\item for all 
$x, y\in X$ 
we have 
$d(x, y)=0_T$ 
if and only if 
$x=y$, 
where 
$0_T$ stands for the least element of 
$(T, \le_T)$;
\item for all 
$x, y\in X$ 
we have 
$d(x, y)=d(y, x)$;
\item for all 
$x, y, z\in X$ 
we have 
$d(x, y)\le_T d(x, z)\lor_T d(z, y)$, 
where 
$\lor_T$ 
is the maximal operator of 
$(T, \le_T)$. 
\end{enumerate}
Such general ultrametric spaces, 
or general metric spaces 
on which  distances are valued 
in  a totally ordered Abelian group are studied for a long time 
(see 
e.g., 
\cite{Sik50}, \cite{CG50}, \cite{SV96},  \cite{OS99} and  \cite{C19}). 
\par

The construction of universal ultrametric space of Lemin--Lemin- type
 mentioned above 
 and 
the proof of Theorem \ref{thm:ultemb} are still valid 
for $(T, \le_T)$-valued ultrametric spaces 
for all bottomed totally ordered set $(T, \le_T)$. 
For  simplicity,  
and for necessity of our study, 
we omit the details of the proof of the following: 
\begin{thm}
Let $(T, \le_T)$ be a bottomed totally ordered set possessing at least two elements. 
Let $R$ be an integral domain, and let  
$(X, d)$ be a $(T, \le_T)$-valued ultrametric space. 
Then there exist a 
$(T, \le_T)$-valued ultra-normed $R$-module  $(V, \|*\|)$,   
and an isometric  embedding $I: X\to V$
such that 
\begin{enumerate}
\item $I(X)$ is closed in $V$;
\item $I(X)$ is $R$-independent in $V$. 
\end{enumerate}
Moreover, 
if $(X, d)$ is complete, 
then we can choose $(V, \|*\|)$ as a complete  
$(T, \le_T)$-valued ultrametric space.
\end{thm}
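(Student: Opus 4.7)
The plan is to replicate the proof of Theorem \ref{thm:ultemb} essentially verbatim, systematically replacing the range set $S\subseteq [0,\infty)$ by the bottomed totally ordered set $(T,\le_T)$, and verifying that each ingredient of the proof survives this generalization. Throughout, write $T_+=T\setminus\{0_T\}$, and let $\lor_T$ denote the binary maximum in $T$.

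First I would set up the analog of the Lemin--Lemin ambient space. Given an $R$-module $M$ with zero element $o$, define $\elel(T,M,o)$ to be the set of maps $f\colon T_+\to M$ for which there exists $C\in T_+$ with $f(q)=o$ whenever $q>_T C$. On this set put
\[
\Delta(f,g)=\sup\{q\in T_+\mid f(q)\ne g(q)\},
\]
with the convention $\Delta(f,g)=0_T$ when the set is empty. For general $f,g$ the supremum might fail to live in $T$, but when restricted to the image of a Lemin--Lemin embedding (and to $R$-submodules generated by such an image), I would argue that the supremum is always attained, i.e., it is a maximum, so $\Delta$ indeed takes values in $T$. The verification that $\elel(T,M,o)$ is an $R$-module under coordinatewise operations and that $\Delta$ is an invariant $T$-valued ultrametric on this submodule is formally identical to Lemmas \ref{lem:LLcomp} and \ref{lem:ultcood}, since the strong triangle inequality and the invariance follow from the pointwise definition and the totality of $\le_T$.

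Next, I would generalize Theorem \ref{thm:LL0}: construct an isometric embedding $L\colon X\sqcup\{o\}\to \elel(T,M,o)$ by the same transfinite induction, replacing each interval of the form $(0,D_\gamma]\cap S_+$ by its analog $\{q\in T_+\mid q\le_T D_\gamma\}$. The crucial isosceles property (Lemma \ref{lem:isosceles}) is purely order-theoretic and holds in any $(T,\le_T)$-valued ultrametric space, so the induction proceeds as before; property (3) then states that the set $\{q\in T_+\mid L(x)(q)\ne L(y)(q)\}$ equals $\{q\in T_+\mid q\le_T d(x,y)\}$, whose maximum is $d(x,y)$, ensuring the isometry and the $T$-valuedness of $\Delta$ on $L(X\sqcup\{o\})$. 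With $M=\mathrm{F}(R,X,o)$, the $R$-independence proof of Lemma \ref{lem:ultindependent} goes through unchanged, because it only uses the freeness of $M$ on the generator set $X$ and the existence of a single ``witness'' level $c\in T_+$. Likewise, Lemma \ref{lem:ultgenmod} transcribes directly: for $x=\sum N_iL(x_i)$ one partitions $T_+$ into finitely many levels using the finite set of values $\{d(x_i,o)\}\cup\{d(x_i,x_j)\}\subset T$, on each of which the coordinate functions $L(x_i)$ are constant, and concludes via the $R$-independence of the relevant generator subset of $M$ that $\Delta(x,0_{\elel})$ is one of these levels and hence lies in $T$.

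For the ``moreover'' part one also needs a $T$-valued analog of Lemma \ref{lem:compmod} and Proposition \ref{prop:ultcompletion}. I would define Cauchy sequences via $T_+$-indexed tolerances and construct the completion as the quotient of Cauchy sequences by the equivalence relation $\{x_n\}\sim\{y_n\}\iff d(x_n,y_n)\to 0_T$ (in the obvious order-theoretic sense). In the ultrametric setting, for a Cauchy sequence $\{x_n\}$ and any fixed $a$ the value $d(x_n,a)$ is eventually constant in $T$, so the limiting distance between two equivalence classes is always attained as a value $d(x_n,y_n)$ for large $n$; hence the completion remains $T$-valued, no Dedekind-completion of $T$ is required, and the module operations extend by the argument of Lemma \ref{lem:compmod}. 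With these preparations in place, the final assembly mirrors Theorem \ref{thm:ultemb}: first handle a complete $(X,d)$ by taking a one-point extension (the analog of Corollary \ref{cor:ultonept}, whose proof is the strong-triangle amalgamation with $\lor_T$ in place of $\lor$), apply the Lemin--Lemin construction, pass to the submodule generated by $L(X)$, and then to its completion; the non-complete case reduces to the complete one by Lemma \ref{lem:algind} exactly as before.

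The main obstacle is the completion step: one must verify, without the convenience of an ambient Archimedean order, that the completion of a $T$-valued ultrametric space remains $T$-valued, and that the module structure on $\elel(T,M,o)$ extends to its completion while preserving invariance of $\Delta$. The key observation enabling this is that Cauchy sequences in an ultrametric space produce eventually constant distance values, which circumvents the need for any order-completeness hypothesis on $(T,\le_T)$.
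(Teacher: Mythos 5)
Your proposal follows exactly the route the paper itself indicates: in Subsection \ref{subsec:genult} the author merely asserts that the Lemin--Lemin construction and the proof of Theorem \ref{thm:ultemb} remain valid for $(T,\le_T)$-valued ultrametrics and omits all details, so your transcription is the intended proof. The two obstacles you flag explicitly --- attainment of the supremum defining $\Delta$ on the submodule generated by $L(X)$, and the $T$-valuedness of the completion via eventual constancy of distances along Cauchy sequences --- are the right ones, and your resolutions are correct; note only that $d(x_n,a)$ is eventually constant \emph{unless} $x_n\to a$, in which case the limiting distance is $0_T\in T$ anyway, so the conclusion you draw still stands.

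One step deserves more care than ``the induction proceeds as before'': in Case 2 of the transfinite construction of Theorem \ref{thm:LL0}, the paper uses the real infimum $D_\gamma$ and the tolerances $D_\gamma+1/n$, neither of which is available in a general bottomed totally ordered set (the set $\{\, d(x(\alpha),x(\gamma))\mid \alpha<\gamma\, \}$ need not possess an infimum in $T$, and there is no addition). The repair is local: put $L(x(\gamma))(q)=x(\gamma)$ for those $q\in T_+$ satisfying $q\le_T d(x(\alpha),x(\gamma))$ for every $\alpha<\gamma$, and for each remaining $q$ choose any $\alpha<\gamma$ with $d(x(\alpha),x(\gamma))<_T q$ and put $L(x(\gamma))(q)=L(x(\alpha))(q)$; the strong triangle inequality (via Lemma \ref{lem:isosceles} and the inductive hypothesis) shows this is independent of the choice of $\alpha$, and the verification of the isometry and of property (3) then proceeds as in your outline.
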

For a bottomed totally ordered set $(T, \le_T)$, 
we define the 
\emph{coinitiality} 
$\coi(T, \le_T)$ of $T$ as the minimal cardinal $\kappa>0$
such that there exists a strictly decreasing map 
$f: \kappa+1\to T$ with 
$f(\kappa)=0_T$ such that 
for every $t\in T$, there exists $\alpha<\kappa$ with $f(\alpha)\le t$. 
Note that a range set $S$ has  countable coinitiality 
if and only if 
$\coi(\cl(S), \le)=\omega_0$. 
Some  readers may think our results 
such as 
Corollary \ref{cor:ultcpt} 
and Theorems \ref{thm:ultex}--\ref{thm:ultint} 
in this paper can be generalized 
for $(T, \le_T)$-valued  ultrametrics 
for a bottomed totally ordered set   
$(T, \le_T)$ 
satisfying $\coi(T, \le_T)>\omega_0$. 
Unfortunately, 
it seems to be quite difficult. 
Our  proofs of Theorems \ref{thm:ultex}--\ref{thm:ultint} 
require 
the extension theorem 
(Corollary \ref{cor:ultes}) 
of continuous functions on ultrametric spaces. 
An analogue for  $(T, \le_T)$-valued ultrametric spaces 
of  Corollary \ref{cor:ultes} seems not to hold true.

%%%%%%%%%%%%%%%%%%%%%%%%%%%%%%%%%%%%%%%%%%%%%%%%%%%%%%%%%%%%%%%%%%%%%%%%%%%%%%%%%%%%%%%%%%%%%%%%%%%%%%%%%%%%%%%%%%%%%%%%%%%%%%%%%%%%%%%%%%%%%%%%%%%%%%%%%%%%%%%%%%%%%%%%%%%%%%%%%%%%%%%%%%%%%%%%%%%%%%%%%%%%%%%%%%%%%%%%%%%%%%%%%%%%%%%%%%%%%%%%%%%%%%%%%%%%%%%%%%%%%%%%%%%%%%%%%%%%%%%%%%%%%%%%%%%%%%%%%%%%%%%%%%%%%%%%%%%%%%%%%%%%%%%%%%%%%%%%%%%%%%%%%%%%%%%
\section{An extension theorem of ultrametrics}\label{sec:uh}
In this section, 
by following the methods of Toru\'nczyk \cite{Tor} 
and Hausdorff \cite{Haus1},  
we prove Theorem \ref{thm:ultex} and Corollary \ref{cor:ultcpt}. 
Since  Toru\'nczyk's proof of Lemma in \cite{Tor} 
on real linear spaces does not depend on the coefficient ring $\rr$, 
we can apply that method 
to all ultra-normed modules over all commutative rings. 
Toru\'nczyk used 
the Dugundji extension theorem in the proof of Lemma in  \cite{Tor}. 
Instead of the Dugundji extension theorem, 
we use Corollary \ref{cor:ultes}, 
which is an extension theorem 
for continuous functions on ultrametrizable spaces. 
\begin{lem}\label{lem:tor}
Let $R$ be a commutative ring. 
Let $(E, D_E)$ and $(F, D_F)$ be two ultra-normed $R$-modules. 
Let $K$ and $L$ be  closed subsets of $E$ and $F$, respectively. 
Let $f: K\to L$ be a homeomorphism. 
Let $0_E $ and $0_F$ denote the zero elements of $E$ and $F$, respectively. 
Let $g: K\times \{0_F\}\to \{0_E\}\times L$ be 
a homeomorphism defined by 
$g(x, 0_F)=(0_E, f(x))$. 
Then there exists a homeomorphism 
$h:E\times F\to E\times F$ with $h|_{K\times \{0_F\}}=g$. 
\end{lem}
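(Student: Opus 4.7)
The plan is to imitate Toru\'nczyk's original construction, which produces the desired homeomorphism as a composition of two elementary ``shear'' maps built from continuous extensions of $f$ and $f^{-1}$. The only place where Toru\'nczyk's argument uses the linear structure over $\rr$ is the Dugundji extension theorem, which we replace by Corollary \ref{cor:ultes}. The module and ultra-norm axioms give us all we need: by Lemma \ref{lem:addconti}, addition and inversion in $E$ and $F$ are continuous, and the underlying spaces are ultrametrizable, so Corollary \ref{cor:ultes} applies.

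First I would apply Corollary \ref{cor:ultes} to the ultrametrizable space $E$ and its closed subset $K$, together with the continuous map $f: K \to F$, to obtain a continuous extension $\tilde{f}: E \to F$ with $\tilde{f}|_{K} = f$. Similarly I would apply Corollary \ref{cor:ultes} to the ultrametrizable space $F$ and its closed subset $L$, together with the continuous map $f^{-1}: L \to E$, to obtain a continuous extension $\tilde{g}: F \to E$ with $\tilde{g}|_{L} = f^{-1}$.

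Next I would define two candidate self-maps of $E \times F$ by
\[
\phi_1(x, y) = (x,\, y + \tilde{f}(x)), \qquad \phi_2(x, y) = (x - \tilde{g}(y),\, y).
\]
Using Lemma \ref{lem:addconti} (continuity of $+$ and of the inversion) together with the continuity of $\tilde{f}$ and $\tilde{g}$, both $\phi_1$ and $\phi_2$ are continuous. Each admits an explicit continuous inverse,
\[
\phi_1^{-1}(x, y) = (x,\, y - \tilde{f}(x)), \qquad \phi_2^{-1}(x, y) = (x + \tilde{g}(y),\, y),
\]
so they are homeomorphisms of $E \times F$. Setting $h = \phi_2 \circ \phi_1$ therefore produces a homeomorphism $h: E \times F \to E \times F$.

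Finally I would check the boundary condition. For $x \in K$ we have $\tilde{f}(x) = f(x) \in L$, hence $\tilde{g}(f(x)) = f^{-1}(f(x)) = x$, so
\[
h(x, 0_F) = \phi_2(x,\, 0_F + \tilde{f}(x)) = \phi_2(x, f(x)) = (x - \tilde{g}(f(x)),\, f(x)) = (0_E,\, f(x)) = g(x, 0_F),
\]
which verifies $h|_{K \times \{0_F\}} = g$. There is no real obstacle here: the whole argument reduces to a careful bookkeeping of extensions and inverses, and the main point to emphasise is that the ultrametric extension theorem Corollary \ref{cor:ultes} is strong enough to replace the Dugundji extension that Toru\'nczyk used in the normed linear setting.
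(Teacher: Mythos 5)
Your proof is correct and is essentially the paper's own argument: your $\phi_1$ is the paper's shear $I(x,y)=(x,y+\alpha(x))$, your $\phi_2$ is the inverse $Q=J^{-1}$ of the paper's $J(x,y)=(x+\beta(y),y)$, and $h=\phi_2\circ\phi_1=J^{-1}\circ I$ with the identical boundary verification. No substantive differences.
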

\begin{proof}
By Corollary \ref{cor:ultes}, 
we obtain a continuous  map 
$\beta: F\to E$ 
which is an extension of 
$f^{-1}: L\to K$. 
Define a map 
$J:E\times F\to E\times F$ 
by 
$J(x, y)=(x+\beta(y),y)$. 
Lemma \ref{lem:addconti} implies that 
the addition and the inversion on $E$ is continuous, 
and hence $J$ is continuous. 
The map 
$Q:E\times F\to E\times F$ 
defined by 
$Q(x, y)=(x-\beta(y),y)$ 
is also continuous,  
and it is  the inverse map of 
$J$, 
and hence 
$J$ 
is a homeomorphism. 
Similarly,  
by Corollary \ref{cor:ultes}, 
we obtain a continuous map 
$\alpha: E\to F$ 
which is an extension of 
$f: K\to L$. 
Define a map 
$I:E\times F\to E\times F$ 
by 
$I(x, y)=(x, y+\alpha(x))$. 
Then  
$I$ 
is a homeomorphism. 
Define a homeomorphism 
$h:E\times F\to E\times F$ 
by 
$h=J^{-1}\circ I$.   
Since for every 
$x\in K$ 
we have
$I(x,0_L)=(x,\alpha(x))=(x,f(x))$, 
we obtain
\begin{align*}
h(x,0_F)&=J^{-1}(x, f(x))=Q(x, f(x))=(x-\beta(f(x)),f(x))\\
&=(x-f^{-1}(f(x)),f(x))
=(0_E,f(x))=g(x,0_F), 
\end{align*}
and hence 
$h$ 
is an extension of 
$g$. 
\end{proof}

\begin{proof}[Proof of Theorem \ref{thm:ultex}]
Let $S$ be a range set. 
Let $X$ 
be an $S$-valued  ultrametrizable space, 
and let $A$ be a closed subset of $X$. 
Let  
$e\in \ult(A, S)$. 
Take  
$d\in \ult(X, S)$. 
Theorem \ref{thm:ultemb} implies that
there exist an 
$S$-valued  ultra-normed 
$\zz$-module 
$(E, D_E)$ and 
 a closed  isometric embedding 
 $i: (X, d)\to (E, D_E)$. 
 Similarly, 
 there exist an 
 $S$-valued  ultra-normed 
 $\zz$-module 
 $(F, D_F)$ 
 and 
 a closed isometric embedding 
 $j: (A,  e)\to (F, D_F)$.  
 Let $0_E $ and $0_F$ denote the zero elements of $E$ and $F$, respectively. 
 \par
 
Since 
$A$ is closed in $X$, 
the set 
$i(A)$ is closed in $E$. 
Since 
$i$ 
and 
$j$ are topological embeddings, 
$i(A)$ and $j(A)$
are homeomorphisms. 
Define a map  
$f: i(A)\to j(A)$ 
by 
$f=j\circ (i|_A)^{-1}$,  
and by applying  Lemma \ref{lem:tor} to $f$, 
we obtain a homeomorphism 
$h:E\times F\to E\times F$ 
which is an extension of the map
$g:i(A)\times \{0_F\}\to \{0_E\}\times j(A)$ 
defined by 
$g(i(a),0_F)=(0_E, j(a))$. 
\par

Let 
$k:E\to E\times F$
%%%%%%%%%%%%%MMMMM 
be a natural embedding defined by 
$k(x)=(x,0_F)$. 
The map 
$H:X\to E\times F$ 
defined by 
$H=h\circ k\circ i$
is a topological embedding. 
Define a metric $D$ on $X$ by 
\[
D(x,y)=(D_E\times_{\infty} D_F)(H(x), H(y)).
\]
Then 
$D\in \ult(X, S)$.  
Since for every 
$a\in A$ 
we have 
$H(a)=(0_E, j(a))$, 
and 
since 
$j:(A,\rho)\to (F, D_F)$ 
is an isometric embedding, 
we have 
$D|_{A^2}=e$. 
This completes the proof of the former part.
\par

We next show the latter part. 
Assume that  
$X$ is completely metrizable,  
and  
$e\in \ult(A, S)$ is  complete. 
Then by Proposition \ref{prop:Sultcomp}, 
we can choose 
$d\in \ult(X, S)$ as a complete 
$S$-valued ultrametric.  
Thus, 
we can choose 
$(E, D_E)$ and 
$(F, D_F)$ as complete ultrametric spaces, 
and hence the metric space 
$(X, D)$ can be regarded as 
a closed metric subspace of the complete metric space 
$(E\times F, D_F\times_{\infty}D_E)$. 
Therefore $D$ is complete. 
This finishes the proof. 
\end{proof}
\begin{rmk}
In the proof of Theorem \ref{thm:ultex}, 
for simplicity, 
we use $\zz$-modules. 
The proof described  above  is still valid even 
if we use any integral domain as a coefficient ring. 
\end{rmk}

We next prove Corollary \ref{cor:ultcpt}, 
which characterizes the compactness 
in  terms of the completeness  of ultrametrics. 
\begin{lem}\label{lem:ultcb}
Let $S$ be a range set with the countable coinitiality. 
Let $M$ be a countable discrete space. 
Then there exists a non-complete 
$S$-valued ultrametric 
$d\in \ult(M, S)$. 
\end{lem}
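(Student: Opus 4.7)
The plan is to adapt the Delhomm\'{e}--Laflamme--Pouzet--Sauer construction from Remark \ref{rmk:ps}, transplanted to an enumeration of $M$. Since any metric on a finite set is complete, the content of the statement is for countably infinite $M$, so I enumerate $M=\{x_i\}_{i\in\nn}$ once and for all. By the countable coinitiality of $S$, I fix a strictly decreasing sequence $\{r_i\}_{i\in\nn}$ in $S_{+}$ with $\lim_{i\to\infty}r_i=0$. I then define $d\colon M^2\to [0,\infty)$ by setting $d(x_i,x_i)=0$ and, for $i\neq j$, $d(x_i,x_j)=r_i\lor r_j=r_{\min(i,j)}$. By construction, $d$ is symmetric, non-degenerate, and takes values in $S$.

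Next I will verify that $d$ is an ultrametric inducing the discrete topology on $M$. This is essentially the pullback, along the map $i\mapsto r_i$, of the ultrametric on $S$ constructed in Remark \ref{rmk:ps}; equivalently, I check the strong triangle inequality directly by noting that for pairwise distinct $i,j,k$ the value $r_{\min(i,j)}$ never exceeds $r_{\min(i,k)}\lor r_{\min(k,j)}$, which is straightforward by sorting the three indices. For topological discreteness, observe that for each $i$ and every $j\neq i$ one has $d(x_i,x_j)=r_{\min(i,j)}\ge r_i$, so the open ball $U(x_i,r_i;d)$ equals $\{x_i\}$ and every point of $M$ is isolated. Hence $d\in\ult(M,S)$.

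Finally, I will exhibit a Cauchy sequence in $(M,d)$ without a limit. The enumerating sequence $\{x_n\}_{n\in\nn}$ is itself Cauchy, since for $n,m\ge N$ we have $d(x_n,x_m)\le r_N$ and $r_N\to 0$. On the other hand, for every fixed $x_k\in M$ and every $n>k$ one has $d(x_k,x_n)=r_k>0$, so $\{x_n\}$ cannot converge to any point of $M$. Therefore $d$ is not complete, which proves the lemma. There is no genuine obstacle in this argument: the main point is simply that the sequence witnessing the countable coinitiality of $S$ can simultaneously be used to encode distances and to produce a Cauchy non-convergent sequence.
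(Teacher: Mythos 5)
Your proposal is correct and is essentially identical to the paper's proof: both define $d(x_i,x_j)=r_i\lor r_j$ from a strictly decreasing null sequence in $S_+$ witnessing the countable coinitiality, and both observe that the enumerating sequence is Cauchy without a limit. You supply a few more verification details (the strong triangle inequality, discreteness via $U(x_i,r_i;d)=\{x_i\}$) that the paper leaves implicit, but the construction is the same.
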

\begin{proof}
Take a strictly decreasing sequence  
$\{a(i)\}_{i\in \nn}$ in  $S$  convergent to $0$ as $i\to \infty$. 
We may assume that $M=\nn$. 
Define a metric $d$ on $M$ by 
\[
d(n, m)=
\begin{cases}
a(n)\lor a(m) & \text{if $n\neq m$}; \\
0 & \text{if $n=m$}. 
\end{cases}
\]
Then 
$d\in \ult(M, S)$ 
and 
$d$ is non-complete.
In particular, 
$\{n\}_{n\in\nn}$ is 
Cauchy in $(M, d)$, and it does not have  any limit point in $(M, d)$. 
\end{proof}

\begin{proof}[Proof of Corollary \ref{cor:ultcpt}]
Assume that $X$ is not compact. 
Then there exists a closed countable discrete subset 
$M$ of $X$. 
By Theorem \ref{thm:ultex} and Lemma \ref{lem:ultcb}, 
we obtain a non-complete 
$S$-valued  ultrametric $D$ 
on $X$ with 
$D\in \ult(X, S)$. 
This implies Corollary \ref{cor:ultcpt}. 
\end{proof}

%%%%%%%%%%%%%%%%%%%%%%%%%%%%%%%%%%%%%%%%%%%%%%%%%%%%%%%%%%%%%%%%%%%%%%%%%%%%%%%%%%%%%%%%%%%%%%%%%%%%%%%%%%%%%%%%%%%%%%%%%%%%%%%%%%%%%%%%%%%%%%%%%%%%%%%%%%%%%%%%%%%%%%%%%%%%%%%%%%%%%%%%%%%%%%%%%%%%%%%%%%%%%%%%%%%%%%%%%%%%%%%%%%%%%%%%%%%%%%%%%%%%%%%%%%%%%%%%%%%%%%%%%%%%%%%%%%%%%%%%%%%%%%%%%%%%%%%%%%%%%%%%%%%%%%%%%%%%%%%%%%%%%%%%%%%%%%%%%%%%%%%%%%%%%%%

\section{An interpolation theorem of ultrametrics}\label{sec:ui}

In this section, we prove Theorem \ref{thm:ultint}.

\subsection{Amalgamations}

The following lemma is a specialized version of 
\cite[Proposition 3.2]{IsI}
for our study on $S$-valued ultrametrics. 
\begin{lem}\label{lem:ultamal111}
Let $S$ be a range set possessing at least two elements. 
Let 
$(X, d_X)$ 
and 
$(Y, d_Y)$ 
be $S$-valued ultrametric spaces, 
and let 
$Z=X\cap Y$. 
Assume that 
\begin{enumerate}
\renewcommand{\labelenumi}{(\Alph{enumi})}
\item $Z\neq \emptyset$;
\item $d_X|_{Z^2}=d_Y|_{Z^2}$;
\item there exists 
$s\in S_+$ 
such that for every 
$x\in X\setminus Z$ 
we have 
$\inf_{z\in Z}d_X(x, z)=s$. 
\end{enumerate}
Then there exists an $S$-valued ultrametric $h$ on 
$X\cup Y$ 
such that 
\begin{enumerate}
	\item $h|_{X^2}=d_X$; 
	\item $h|_{Y^2}=d_Y$. 
\end{enumerate}
\end{lem}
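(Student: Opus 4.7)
The plan is to define $h$ on $X\cup Y$ by extending $d_X$ and $d_Y$ (which agree on $Z^2$ by (B)) and, for cross-pairs $p\in X\setminus Z$ and $q\in Y\setminus Z$, by the familiar amalgamation formula
\[
h(p,q)\;=\;h(q,p)\;=\;\inf_{z\in Z}\bigl(d_X(p,z)\lor d_Y(z,q)\bigr).
\]
Hypothesis (A) guarantees the infimum ranges over a non-empty set, and hypothesis (C) immediately yields the uniform lower bound $h(p,q)\geq s>0$, securing the identity of indiscernibles across the two sides. Consistency with the original metrics when one argument happens to lie in $Z$ follows by taking $z$ equal to that point in the formula and invoking the strong triangle inequality for $d_X$ (resp.\ $d_Y$); symmetry is immediate.

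Next I would verify the strong triangle inequality for $h$ by exhausting the three-point configurations according to whether each point lies in $X\setminus Z$, $Y\setminus Z$, or $Z$. The within-$X$ and within-$Y$ cases are free. In each mixed case the standard recipe applies: for $\epsilon>0$, pick $z,z'\in Z$ almost realising the two relevant infima, combine the strong triangle inequalities of $d_X$ and $d_Y$ using (B) to identify $d_X(z,z')=d_Y(z,z')$, and then let $\epsilon\to 0$. This step is technical but entirely routine given the structure of the infimum.

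The genuinely delicate point, and the one I expect to be the main obstacle, is showing that $h(p,q)$ lies in $S$ itself rather than merely in $\cl(S)$, since a priori infima of $S$-valued quantities can drop out of $S$. The key idea is that hypothesis (C) rigidifies the $X$-contribution: every $z\in Z$ satisfies $d_X(p,z)\geq s$, and the ultrametric isosceles principle (Lemma~\ref{lem:isosceles}) forces $d_Y(\cdot,q)$ to be constant on any $d_Y$-ball of radius strictly less than $d_Y(z_0,q)$ around a reference point $z_0$. The plan is to choose $z_0\in Z$ with $d_X(p,z_0)$ close to $s$ and then shrink within the $d_Y$-ball where $d_Y(\cdot,q)$ is locked at the value $d_Y(z_0,q)$; transporting this ball to the $d_X$-metric via (B) and the strong triangle inequality for $d_X$ produces a witness $z^{*}\in Z$ with $d_X(p,z^{*})=s$, so that
\[
h(p,q)\;=\;s\lor d_Y(z^{*},q).
\]
Because $\max\{a,b\}\in\{a,b\}$, this is an element of $S$, and a matching lower bound argument (varying $z_0$) rules out the infimum falling below $s\lor d_Y(z^{*},q)$. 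Collecting these pieces yields the desired $h\in\ult(X\cup Y,S)$ extending both $d_X$ and $d_Y$.
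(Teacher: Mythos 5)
Your construction of $h$, the well-definedness check, and the verification of the strong triangle inequality all follow the paper's proof, and those parts are fine. The problem is in the step you yourself identify as delicate: showing $h(p,q)\in S$. Your argument hinges on producing a witness $z^{*}\in Z$ with $d_X(p,z^{*})=s$ exactly, but hypothesis (C) only asserts that $\inf_{z\in Z}d_X(p,z)=s$; it does not assert that this infimum is attained, and in general it is not. For instance, take $Z=\{z_n\}_{n\in\nn}$, $X=Z\cup\{p\}$, $d_X(p,z_n)=1+1/n$ and $d_X(z_n,z_m)=1+1/\min\{n,m\}$ for $n\neq m$; this is a valid $S$-valued ultrametric for a suitable $S$ containing $1$, and (C) holds with $s=1$, yet no $z\in Z$ satisfies $d_X(p,z)=1$. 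Your ``transporting the ball'' step cannot manufacture such a point: restricting to a $d_Y$-ball around a reference $z_0$ only shrinks the set over which you are taking the infimum, so attainment is no more likely there. Consequently the identity $h(p,q)=s\lor d_Y(z^{*},q)$ has no witness to instantiate it, and your proof of $S$-valuedness does not go through.

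The correct argument (the one in the paper) avoids attainment entirely via a dichotomy. Since $d_X(p,z)\ge s$ for all $z\in Z$, we have $h(p,q)\ge s$. If $h(p,q)=s$, then $h(p,q)\in S$ and we are done. If $h(p,q)>s$, then by (C) there exists $z\in Z$ with $d_X(p,z)<h(p,q)$, i.e.\ $h(p,z)<h(p,q)$; the isosceles principle (Lemma~\ref{lem:isosceles}) applied to the ultrametric $h$ on the triple $p,z,q$ then forces $h(p,q)=h(z,q)=d_Y(z,q)\in S$. You were clearly circling this idea, but the version you wrote down requires an exact minimizer that the hypotheses do not provide; replacing it with the strict-inequality dichotomy closes the gap.
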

\begin{proof}
We define a symmetric function 
$h: (X\cup Y)^2\to [0, \infty)$ 
by
\begin{align*}
	h(x, y)=
		\begin{cases}
		d_X(x, y) & \text{if $x, y\in X$;}\\
		d_Y(x, y) & \text{if $x, y\in Y$;}\\
		\inf_{z\in Z}(d_X(x, z)\lor d_Y(z, y)) & \text{if $(x, y)\in X\times Y$. }
		\end{cases}
\end{align*}
Since 
$d_X|_{Z^2}=d_Y|_{Z^2}$, 
the function 
$h$ is well-defined. 
By the definition, 
$h$ satisfies the conditions $(1)$ and $(2)$. 

We next prove that $h$ satisfies the strong triangle inequality. 
In the case where 
$x, y\in X$ 
and 
$z\in Y$, 
for all 
$a, b\in Z$ 
 we have 
\begin{align*}
	h(x, y)&=d_X(x, y)\le d_X(x, a)\lor d_X(a, b)\lor d_X(b, y)\\
	&=d_X(x, a)\lor d_Y(a, b)\lor d_X(b, y)\\
	&\le (d_X(x, a)\lor d_Y(a, z))\lor (d_Y(z, b)\lor d_X(b, y)), 
\end{align*}
and hence we obtain 
$h(x, y)\le h(x, z)\lor h(z, y)$. 
In the case where  
$x, z\in  X$ and $y\in Y$, 
for all 
$a\in Z$ 
we have 
\begin{align*}
	h(x, y)&\le d_X(x, a)\lor d_Y(a, y)\\
	&\le d_X(x, z)\lor (d_X(z, a)\lor d_Y(a, y)), 
\end{align*}
and hence we have 
$h(x, y)\le h(x, z)\lor h(z, y)$. 
By replacing the role of $X$ with that of $Y$, 
we see that 
$h$ satisfies the strong triangle inequality. 
\par

We now prove that $h$ takes values in $S$. 
It suffices to show that 
for all  
$x\in X\setminus Z$ 
and 
$y\in Y\setminus Z$,  
we have 
$h(x, y)\in S$. 
By the assumption (C) and the definition of $h$, 
we obtain 
$s \le h(x, y)$. 
If 
$s=h(x, y)$, 
then 
$h(x, y)$ is in $S$. 
If 
$s<h(x, y)$, 
by the assumption (C), 
there exists 
$z\in Z$ 
with   
$h(x, z)<h(x, y)$. 
Lemma \ref{lem:isosceles} implies that 
$h(x, y)=h(z, y)$. 
Since 
$h(z, y)=d_Y(z, y)$, 
we have 
$h(x, y)\in S$. 
This completes the proof. 
\end{proof}

Let $X$ and $Y$ be two sets,  
and let $\tau: X\to Y$ be a bijective map. 
For a metric $d$  on $Y$,  
we denote by 
$\tau^*d$ 
the metric on $X$ 
defined by 
$(\tau^{*}d)(x, y)=d(\tau(x), \tau(y))$. 
Remark that the map 
$\tau$ is an isometry from  
$(X, \tau^*d)$ 
into  
$(Y, d)$. 

The following  Proposition \ref{prop:ultGH} and Lemmas \ref{lem:ultam} and \ref{lem:key} are  ultrametric 
versions of \cite[Proposition 3.1, Lemma 3.4, Lemma 3.5]{IsI}.

\begin{prop}\label{prop:ultGH}
Let $S$ be a range set possessing at least two elements. 
Let $X$ be an $S$-valued  ultrametrizable space. 
Let   
$r\in S_+$ and 
$d, e\in \ult(X, S)$ 
satisfy the inequality 
$\mathcal{UD}_X^S(d, e)\le r$. 
Put 
$X_0=X$, 
and 
let 
$X_1$ 
be a set with 
$\card(X_1)=\card(X_0)$ 
and 
$X_0\cap X_1=\emptyset$. 
Let 
$\tau :X_0\to X_1$ 
be a bijection.  
Then there exists an ultrametric 
$h\in \ult(X_0\sqcup X_1, S)$ 
such that 
\begin{enumerate}
	\item $h|_{X_0^2}=d$;
	\item $h|_{X_1^2}=(\tau^{-1})^*e$; 
	\item for every 
	$x\in X_0$ 
	we have 
	$h(x, \tau(x))= r$. 
\end{enumerate}
\end{prop}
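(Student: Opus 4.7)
The plan is to construct $h$ directly by a piecewise formula on $X_0 \sqcup X_1$. For $x, y \in X_0$ I set $h(x,y) = d(x,y)$; for $x, y \in X_0$ I set $h(\tau(x), \tau(y)) = e(x,y)$; and for the cross terms I set $h(x, \tau(y)) = h(\tau(y), x) = d(x, y) \lor r$ for $x, y \in X_0$. The hypothesis $\mathcal{UD}_X^S(d, e) \le r$ means that $d(a, b) \lor r = e(a, b) \lor r$ for all $a, b \in X_0$, so the cross-term formula can equivalently be written as $e(x,y) \lor r$; this symmetric rewriting will be the main tool in the verification. Properties (1), (2), (3) are immediate from the definition, noting in particular that $h(x, \tau(x)) = d(x, x) \lor r = r$. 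Membership of all values of $h$ in $S$ is also immediate: $d$ and $e$ take values in $S$, $r \in S_+$, and for $a \in S$ the value $a \lor r$ is equal to either $a$ or $r$.

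The main task is to verify the strong triangle inequality for $h$ by case analysis on the positions of three points $p, q, s$ in $X_0 \sqcup X_1$. The cases where all three lie in $X_0$ or all in $X_1$ follow directly from the strong triangle inequality for $d$ or $e$, respectively. Among the mixed cases, those in which the two endpoints $p, s$ lie on the same side of the partition reduce quickly to the fact that every cross-term is at least $r$, so that any triangle with vertices on both sides has at least two sides bounded below by $r$. The delicate mixed case is the opposite-side one: for $p \in X_0$, $q = \tau(q') \in X_1$, $s = \tau(s') \in X_1$, one must show $d(p, s') \lor r \le (d(p, q') \lor r) \lor e(q', s')$, which I would obtain by combining the strong triangle inequality $d(p, s') \le d(p, q') \lor d(q', s')$ with the comparison $d(q', s') \le e(q', s') \lor r$ afforded by the hypothesis on $\mathcal{UD}_X^S$; the symmetric case where $p \in X_1$ and $q, s \in X_0$ is handled identically by invoking the $e$-form of the cross term.

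Once the strong triangle inequality is established, $h$ is an $S$-valued ultrametric on $X_0 \sqcup X_1$, and it remains to verify topological compatibility with the direct-sum topology. Since every cross distance is at least $r > 0$, both $X_0$ and $X_1$ are clopen in $(X_0 \sqcup X_1, h)$, and on each summand $h$ restricts to $d$ or $(\tau^{-1})^* e$, each of which generates the original topology by assumption. Hence $h \in \ult(X_0 \sqcup X_1, S)$, completing the construction. The main obstacle is the bookkeeping in the opposite-side case of the strong triangle inequality, since this is the only case that genuinely uses the hypothesis $\mathcal{UD}_X^S(d, e) \le r$ rather than following from one of the two ambient strong triangle inequalities; all remaining cases are purely formal.
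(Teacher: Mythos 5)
Your construction is correct and is, at bottom, the paper's amalgamation with the cross distance evaluated in closed form. The paper defines the cross term as $\inf_{a\in X_0}\bigl(d(x,a)\lor r\lor (\tau^{-1})^*e(\tau(a),y)\bigr)$; under the hypothesis $\mathcal{UD}_X^S(d,e)\le r$ (which, as you note, gives $d\le e\lor r$ and $e\le d\lor r$ pointwise, hence $d\lor r=e\lor r$) this infimum is attained at $a=y$ and equals exactly your $d(x,y)\lor r$. Writing the closed form has two payoffs: the $S$-valuedness of $h$ becomes immediate (since $a\lor r\in\{a,r\}\subseteq S$), whereas the paper must invoke the isosceles lemma (Lemma \ref{lem:isosceles}) to see that its infimum lands in $S$; and the two equivalent expressions $d\lor r$ and $e\lor r$ for the cross term make the case analysis symmetric between the two copies. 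Your identification of the genuinely nontrivial configuration (endpoints on opposite sides of the partition) and your use of $d(q',s')\le e(q',s')\lor r$ there are exactly right, and the topological-compatibility argument (cross distances $\ge r>0$ force the direct-sum topology) matches the paper's. One sentence should be tightened: in the same-side case $p,s\in X_0$, $q=\tau(q')\in X_1$, the inequality $h(p,s)\le h(p,q)\lor h(q,s)$ does \emph{not} follow from the two cross sides being bounded below by $r$, since the left side $d(p,s)$ may well exceed $r$; the correct one-line reason is $h(p,q)\lor h(q,s)\ge d(p,q')\lor d(q',s)\ge d(p,s)$, and dually with $e$ in place of $d$ when $p,s\in X_1$. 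With that repair every case closes and the proof is complete.
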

\begin{proof}
We define a symmetric function 
$h: (X_0\sqcup X_1)^2\to [0, \infty)$ 
by
\begin{align*}
	h(x, y)=
		\begin{cases}
		d(x, y) & \text{if $x, y\in X_0$;}\\
		 (\tau^{-1})^*e(x, y) & \text{if $x, y\in X_1$;}\\
		 \inf_{a\in X_0}
		 (d(x, a)\lor r\lor (\tau^{-1})^*e(\tau(a), y)) & \text{if $(x, y)\in X_0\times X_1$. }
		\end{cases}
\end{align*}
By the definition, 
for every 
$x\in X$, 
we have 
$h(x, \tau(x))\ge r$,  
and
	\[
	h(x, \tau(x))\le d(x, x)\lor r\lor (\tau^{-1})^*e(\tau(x), \tau(x))=r. 
	\]
Therefore for every 
$x\in X$ 
we have 
$h(x, \tau(x))=r$.  

We now prove that $h$ satisfies the strong triangle inequality. 
In the case where 
$x, y\in X_0$ 
and 
$z\in X_1$, 
for all 
$a, b\in X_0$, by $\mathcal{UD}_X^S(d, e)\le r$
we have 
\begin{align*}
	&h(x, y)= d(x, y)\le d(x, a)\lor d(a, b)\lor d(b, y)\\
	&\le  d(x, a)\lor r\lor (\tau^{-1})^*e(\tau(a), \tau(b))\lor d(b, y)\\
	&\le d(x, a)\lor r\lor (\tau^{-1})^*e(\tau(a), z)\lor (\tau^{-1})^*e(\tau(b), z)\lor d(b, y)\\
	& \le  (d(x, a)\lor r\lor (\tau^{-1})^*e(\tau(a), z))\lor (d(y, b)\lor r\lor (\tau^{-1})^*e(\tau(b), z)), 
\end{align*}
and hence we obtain
$h(x, y)\le h(x, z)\lor h(z, y)$. 
In the case where 
$x, z\in X_0$ 
and 
$y\in X_1$, 
for all 
$a\in X_0$
we have 
\begin{align*}
	h(x, y)&\le d(x, a)\lor r\lor (\tau^{-1})^*e(\tau(a), y)\\
	&\le 
	d(x, z)\lor (d(z, a)\lor r\lor (\tau^{-1})^*e(\tau(a), y)), 
\end{align*}
and hence  
$h(x, y)\le h(x, z)\lor h(z, y)$. 
By replacing the role of $X_0$ with that of $X_1$, 
we see that $h$ satisfies the strong triangle inequality. 
By the property (3), we also see that $h\in \ult(X_0\sqcup X_1)$. 
\par

We next prove that $h$ takes values in $S$. 
It suffices to show that for all $(x, y)\in X_0\times X_1$, we have 
$h(x, y)\in S$. 
By the definition of $h$, we have $r\le h(x, y)$. 
If $r=h(x, y)$, then $h(x, y)$ is in $S$. 
If $r<h(x, y)$, by $h(x, \tau(x))=r$, we have $h(x, \tau(x))<h(x, y)$. 
From Lemma \ref{lem:isosceles}, it follows that 
$h(x, y)=h(\tau(x), y)$. 
Since $h(\tau(x), y)= (\tau^{-1})^*e(\tau(x), y)$ and $ (\tau^{-1})^*e(\tau(x), y)\in S$, 
we conclude that $h$ takes values in $S$. 
\end{proof}

\begin{lem}\label{lem:ultam}
Let $S$ be a range set possessing at least two elements, and let $s\in S_+$. 
Let 
$\{(A_i,  e_i)\}_{i\in I}$ 
be a mutually disjoint family of 
$S$-valued ultrametric spaces. 
Then there exists an ultrametric 
$h\in \ult(\coprod_{i\in I}A_i, S)$ 
such that 
\begin{enumerate}
\item for every $i\in I$ we have $h|_{A_i^2}=e_i$; 
\item for all distinct $i, j\in I$, and for all $x\in A_i$ and $y\in A_j$, 
we have $s\le h(x, y)$. 
\end{enumerate}
\end{lem}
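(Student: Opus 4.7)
The plan is to construct $h$ directly by choosing a base point in each component and using these base points to ``hang'' all pieces from a common height $s$. Without loss of generality assume each $A_i$ is non-empty, since empty components contribute nothing. For each $i \in I$, fix a point $p_i \in A_i$.

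I define $h \colon (\coprod_{i \in I} A_i)^2 \to [0,\infty)$ by $h(x,y) = e_i(x,y)$ when $x, y$ lie in the same component $A_i$, and
\[
h(x, y) = e_i(x, p_i) \lor s \lor e_j(p_j, y)
\]
when $x \in A_i$ and $y \in A_j$ with $i \neq j$. Each value is $S$-valued, because it is either some $e_i(x,y) \in S$, or the maximum of three elements of $S$, which coincides with one of them and hence still belongs to $S$ (no closedness of $S$ under arbitrary suprema is needed). Symmetry is immediate, and $h(x,y)>0$ for $x \neq y$ follows because $e_i$ is a metric within a component and $h \ge s > 0$ across components.

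The strong triangle inequality is verified by cases on how three points $x, y, z$ distribute among the components. The case of all three in one $A_i$ reduces to the strong triangle inequality for $e_i$. When two points lie in $A_i$ and the third in $A_j$, $j \neq i$, each cross-component value dominates $e_i(\cdot, p_i)$, and combining this with the strong triangle inequality for $e_i$ settles every ordering. When the three points lie in three distinct components, each cross-component value is a max of three terms including the base-point contributions of its endpoints together with $s$, so $h(x,y) \le h(x,z) \lor h(z,y)$ follows term by term. Condition $(1)$ holds by definition, and condition $(2)$ because every cross-component distance is $\ge s$.

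To see that $h$ generates the disjoint union topology, observe that $h|_{A_i^2} = e_i$ gives the correct subspace topology on each $A_i$, and that for any $x \in A_i$ and any real $r \in (0, s)$ the $h$-ball $B(x, r; h)$ is contained in $A_i$, so each $A_i$ is $h$-open. No serious obstacle arises; the only mildly fiddly step is the strong-triangle-inequality check in the three-distinct-components case, which is pure bookkeeping. For finite $I$ one could instead iterate Proposition \ref{prop:ultamal2}, but the direct formula above is needed for arbitrary $I$.
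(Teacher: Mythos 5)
Your proof is correct, but it takes a genuinely different route from the paper. The paper proves this lemma by transfinite induction on a well-ordering of $I$: at each successor stage it invokes Proposition \ref{prop:ultamal2} to amalgamate the next component $A_\alpha$ onto the union already built, with separation $r=s$, and at limit stages it takes the union of the previously defined ultrametrics, checking that the three inductive hypotheses (compatibility, $S$-valuedness and topology, separation $\ge s$) are preserved. You instead give a single closed-form definition using a chosen base point $p_i$ in each component, $h(x,y)=e_i(x,p_i)\lor s\lor e_j(p_j,y)$ for $x\in A_i$, $y\in A_j$ with $i\neq j$; this is essentially the two-space amalgamation formula behind Proposition \ref{prop:ultamal2} applied to all components simultaneously, so the whole transfinite recursion collapses into one case analysis. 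Your case check is complete and correct (in every cross-component configuration the needed terms $e_i(x,p_i)$, $s$, $e_j(p_j,y)$ literally appear on the right-hand side, and the two-points-in-one-component cases reduce to the strong triangle inequality for that $e_i$), and your $S$-valuedness argument is in fact simpler than the infimum-plus-isosceles argument the paper needs elsewhere (e.g.\ in Lemma \ref{lem:ultamal111}), since a maximum of three elements of $S$ is one of them. What the paper's approach buys is reuse of an already-stated proposition with minimal new verification; what yours buys is a shorter, self-contained, induction-free proof. Your topological argument (each $A_i$ is $h$-open because cross-component distances are at least $s>0$, and $h$ restricts to $e_i$ on $A_i$) correctly identifies the $h$-topology with the direct sum topology.
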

\begin{proof}
We may assume that  $I=\kappa$, where $\kappa$ is a cardinal. 
By transfinite induction, 
we define a desired  ultrametric $h$ as follows:
Let 
$\alpha<\kappa+1$. 
Assume that we already define ultrametrics 
$\{h_{\gamma}\}_{\gamma<\alpha}$ 
such that 
\begin{enumerate}
	\item if 
	$\gamma<\delta<\alpha$, 
	then for all 
	$x, y\in A_{\gamma}$ 
	we have 
	$h_{\gamma}(x, y)=h_{\delta}(x, y)$; 
	\item for every 
	$\beta<\alpha$ 
	we have 
	$h_{\beta}\in \ult(\coprod_{\gamma<\beta}A_{\gamma}, S)$; 
	\item if $\gamma, \delta, \beta<\alpha$ satisfy $\gamma< \delta<\beta$, and  if 
	$x\in A_{\gamma}$ 
	and 
	$y\in A_{\delta}$, 
	then
	we have $s\le h_{\beta}(x, y)$. 
\end{enumerate} 
If 
$\alpha=\beta+1$ for some $\beta$, then 
we can define an $S$-valued  ultrametric 
$h_a\in \ult(\coprod_{\gamma<\alpha}A_{\gamma}, S)$ 
by using  Proposition \ref{prop:ultamal2} for 
$X=\coprod_{\gamma<\beta}A_{\gamma}$, 
$Y=A_{\alpha}$ 
and  
$r=s$. 
Assume next that  $\alpha$ is a limit ordinal.  
We define a function  $h_{\alpha}$ on 
$\left(\coprod_{\gamma<\alpha}A_{\gamma}\right)^2$ by 
	\[
	h_{\alpha}(x, y)=h_{\delta}(x, y), 
	\]
where 
$\delta<\alpha$ 
is the first ordinal with 
$x, y\in \coprod_{\epsilon<\delta}A_{\epsilon}$. 
By the inductive hypothesis $(1)$, 
the function 
$h_{\alpha}$ 
is well-defined. 
From the inductive  hypotheses $(2)$ and $(3)$, 
it follows that 
$h_{\alpha}\in \ult(\coprod_{\gamma<\alpha}A_{\gamma}, S)$. 
Put 
$h=h_{\kappa}$, 
then the proof is completed. 
\end{proof}

\begin{lem}\label{lem:key}
Let $S$ be a range set possessing at least two elements. 
Let $X$ be an $S$-valued  ultrametrizable space, 
and let 
$\{A_i\}_{i\in I}$ 
be a discrete family of closed subsets of $X$. 
Let 
$d\in \ult(X, S)$,  
and let
$\{e_i\}_{i\in I}$ 
be a family of ultrametrics such that
$e_i\in \ult(A_i, S)$. 
Assume that 
$\sup_{i\in I} \mathcal{UD}_{A_i}^S(e_{A_i}, d|_{A_i^2})<\infty$.  
Let $\eta$ be a member in $S_+$ such that 
	\[
	\sup_{i\in I} \mathcal{UD}_{A_i}^S(e_{A_i}, d|_{A_i^2})\le \eta. 
	\]
Let 
$\{B_i\}_{i\in I}$ 
be a mutually disjoint family of sets 
such that for all 
$i\in I$ 
we have 
$\card(B_i)=\card(A_i)$ 
and 
$X\cap B_i=\emptyset$. 
Let 
$\tau: \coprod_{i\in I}A_i\to \coprod_{i\in I}B_i$ 
be a bijection such that 
for each 
$i\in I$ 
the map 
$\tau_i=\tau|_{A_i}$ 
is a bijection between 
$A_i$ 
and 
$B_i$. 
Then 
there exists an $S$-valued  ultrametric $h$ on 
$X\sqcup\coprod_{i\in I}B_i$ 
such that
\begin{enumerate}
	\item for every 
	$i\in I$ 
	we have 
	$h|_{B_i^2}=(\tau_i^{-1})^*e_i$; 
	\item $h|_{X^2}=d$; 
	\item for every 
	$x\in \coprod_{i\in I}A_i$ 
	we have $h(x, \tau(x))= \eta$. 
\end{enumerate}
\end{lem}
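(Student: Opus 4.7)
My plan is to construct $h$ directly by an explicit formula and then verify the ultrametric axioms by a bounded case analysis. Note first that, since $\{A_i\}_{i\in I}$ is discrete and each $A_i$ is closed, the family $\{A_i\}_{i\in I}$ is automatically mutually disjoint, so there is a well-defined retraction
$\pi\colon X\sqcup \coprod_{i\in I}B_i\to X$ given by $\pi|_{X}=\mathrm{id}_{X}$ and $\pi(b)=\tau_i^{-1}(b)$ for $b\in B_i$. I would then define $h$ on $W:=X\sqcup \coprod_{i\in I}B_i$ by
\[
h(w,w')=\begin{cases} d(w,w') & \text{if }w,w'\in X,\\ (\tau_i^{-1})^{*}e_i(w,w') & \text{if }w,w'\in B_i\text{ for the same (necessarily unique) }i,\\ \eta\lor d(\pi(w),\pi(w')) & \text{in all remaining cases.}\end{cases}
\]

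Conditions (1), (2), (3) of the conclusion are then built into the definition: (1) and (2) are immediate, and (3) follows because $\pi(\tau_i(x))=x$, so $h(x,\tau_i(x))=\eta\lor d(x,x)=\eta$. That $h$ is symmetric and takes values in $S$ is also immediate, since $d$ and each $e_i$ are $S$-valued, $\eta\in S_+$, and $\max\{\eta,r\}$ for $r\in S$ is one of the two inputs, hence lies in $S$.

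The remaining task -- and the only nontrivial part -- is the strong triangle inequality for $h$. I would run a case analysis on which ``piece'' of $W$ each of three points $w_1,w_2,w_3$ belongs to (either $X$ or one of the $B_i$). When all three lie in a single piece, the inequality reduces directly to the strong triangle inequality for $d$ or for the transported ultrametric $(\tau_i^{-1})^{*}e_i$. The mixed cases in which no two of the $w_k$ lie in the same $B_i$ reduce, after unwinding the definition, to the strong triangle inequality for $d$ applied to the $\pi$-images, augmented by the constant $\eta$; these go through because $\pi$ is essentially a ``$1$-Lipschitz retraction'' once $\eta$ is adjoined as a lower bound.

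The main obstacle -- and the sole place where the hypothesis on $\eta$ is actually used -- occurs in the case where $w_2,w_3$ both lie in the same $B_i$ while $w_1$ lies outside $B_i$. The right-hand side of the strong triangle inequality then contains the term $(\tau_i^{-1})^{*}e_i(w_2,w_3)=e_i(\pi(w_2),\pi(w_3))$, whereas bounding the left-hand side naively produces the term $d(\pi(w_2),\pi(w_3))$. To close the loop I would invoke the Remark following the definition of $\mathcal{UD}^{S}_{X}$: since $\mathcal{UD}^{S}_{A_i}(e_i,d|_{A_i^{2}})\le \eta$, for every $a,a'\in A_i$ either $\max\{d(a,a'),e_i(a,a')\}\le \eta$ or $d(a,a')=e_i(a,a')$. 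In either case one obtains $d(\pi(w_2),\pi(w_3))\le \eta \lor e_i(\pi(w_2),\pi(w_3))$, which is exactly the comparison needed to finish that case. With this single observation in place, the remaining case checks are routine applications of the strong triangle inequality for $d$, and the lemma follows.
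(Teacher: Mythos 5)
Your construction is correct, and it takes a genuinely more direct route than the paper. The paper assembles $h$ in three stages: Proposition \ref{prop:ultGH} builds, for each $i$, an ultrametric $l_i$ on $A_i\sqcup B_i$ joining $d|_{A_i^2}$ to $(\tau_i^{-1})^*e_i$ at distance $\eta$ via an infimum formula; Lemma \ref{lem:ultam} amalgamates the family $\{(A_i\sqcup B_i, l_i)\}_{i\in I}$ by transfinite induction with mutual separation $\ge\eta$; and Lemma \ref{lem:ultamal111} glues the result onto $(X,d)$ along $\coprod_{i\in I}A_i$, using another infimum formula. Your single closed-form definition $h(w,w')=\eta\lor d(\pi(w),\pi(w'))$ on the mixed pairs collapses all of this into one verification, avoiding the transfinite induction and the two gluing lemmas entirely; the price is a slightly longer but entirely finite case analysis, and the key input is identical in both arguments, namely that $\mathcal{UD}_{A_i}^S(e_i,d|_{A_i^2})\le\eta$ yields $d\le e_i\lor\eta$ and $e_i\le d\lor\eta$ on $A_i^2$. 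One small correction to your accounting: that hypothesis is needed in \emph{two} case families, not one. Besides the case you flag ($w_2,w_3\in B_i$, $w_1$ outside, where you need $d(\pi(w_2),\pi(w_3))\le\eta\lor e_i(\pi(w_2),\pi(w_3))$), the case $w_1,w_3\in B_i$ with $w_2$ outside has left-hand side $e_i(\pi(w_1),\pi(w_3))$ and right-hand side at least $\eta\lor d(\pi(w_1),\pi(w_3))$, so it requires the opposite inequality $e_i\le d\lor\eta$; this is not a gap, since the symmetric observation you extract from the Remark after the definition of $\mathcal{UD}_X^S$ delivers both directions, but the claim that the hypothesis enters in a ``sole place'' is inaccurate.
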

\begin{proof}
By Proposition \ref{prop:ultGH}, 
for every
$i\in I$, 
we find  an $S$-valued ultrametric 
$l_i\in \ult(A_i\sqcup B_i, S)$ 
such that 
\begin{enumerate}
	\item $l_i|_{A_i^2}=d|_{A_i^2}$;
	\item $l_i|_{B_i^2}=(\tau_i^{-1})^*e_i$; 
	\item for every $x\in A_i$ 
	we have $l_i(x, \tau(x))= \eta$. 
	\end{enumerate}
By Lemma \ref{lem:ultam}, 
we obtain an 
$S$-valued ultrametric 
$k$ which is a member of 
$\ult(\coprod_{i\in I}(A_i\sqcup B_i), S)$ 
such that
\begin{enumerate} 
\item for each $i\in I$ 
we have 
$k|_{(A_i\sqcup B_i)^2}=l_i$;
\item for all distinct $i, j\in I$, 
and for all 
$x\in A_i\sqcup B_i$ 
and 
$y\in A_j\sqcup B_j$, 
we have 
$\eta\le h(x, y)$.

\end{enumerate}
Since 
\[
X\cap \left(\coprod_{i\in I}(A_i\sqcup B_i)\right)=\coprod_{i\in I}A_i,
\] 
and since the ultrametric 
$k$ satisfies the assumptions stated in  Lemma \ref{lem:ultamal111}, 
we obtain an 
$S$-valued ultrametric $h$ on 
$X\sqcup \coprod_{i\in I}B_i$ 
such that 
\begin{enumerate}
	\item $h|_{X^2}=d$; 
	\item $h|_{(\coprod_{i\in I}B_i)^2}=k|_{(\coprod_{i\in I}B_i)^2}$. 
\end{enumerate}
By the definitions of ultrametrics $l_i$ and $k$, 
we conclude that $h$ is an $S$-valued ultrametric as required. 
\end{proof}

\subsection{Proof of Theorem \ref{thm:ultint}}
Before proving Theorem \ref{thm:ultint}, 
we recall:
\begin{prop}\label{prop:disclosed}
Let $T$ be a topological space,  
and 
let 
$\{S_i\}_{i\in I}$ 
be a discrete family of closed subsets of $T$.
Then 
$\bigcup_{i\in I}S_i$ 
is closed in $T$. 
\end{prop}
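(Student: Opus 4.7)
The plan is to prove this standard general topology fact by showing that the complement of $U := \bigcup_{i\in I}S_i$ is open, localising around a point $x \notin U$ using the discreteness hypothesis and the closedness of each $S_i$ individually.

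First I would fix an arbitrary point $x \in T \setminus U$ and invoke the definition of discreteness to produce an open neighbourhood $W$ of $x$ that meets at most one member of the family $\{S_i\}_{i \in I}$. Two cases then arise: either $W$ meets none of the sets $S_i$, in which case $W \subseteq T\setminus U$ and we are done immediately, or $W$ meets exactly one, say $S_{i_0}$. In the latter case I would use the hypothesis that $S_{i_0}$ is closed and that $x \notin S_{i_0}$ (since $x\notin U$) to obtain an open neighbourhood $V$ of $x$ with $V \cap S_{i_0} = \emptyset$. The intersection $W \cap V$ is then an open neighbourhood of $x$ contained in $T\setminus U$, because $W\cap V$ avoids every $S_i$ with $i\neq i_0$ (since $W$ does) and also avoids $S_{i_0}$ (since $V$ does).

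Since $x \in T\setminus U$ was arbitrary, this shows $T \setminus U$ is open, hence $U$ is closed. The main (and essentially only) content is the bookkeeping between the two properties: discreteness reduces the problem from an arbitrary family to a single closed set locally, and closedness of that single set then finishes the job. No step here looks like an obstacle; the proof is a one-paragraph direct argument from the definitions, and no earlier machinery from the paper beyond the stated definition of a discrete family is needed.
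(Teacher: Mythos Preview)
Your proof is correct and is the standard direct argument from the definitions. The paper does not actually prove this proposition; it is merely recalled as a known fact immediately before the proof of Theorem~\ref{thm:ultint}, so there is no alternative approach to compare against.
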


In the proof of \cite[Theorem 1.1]{IsI}, 
the author used the Michael 
continuous selection theorem for paracompact spaces. 
Instead of that continuous selection theorem, 
to prove Theorem \ref{thm:ultint}, 
we now use 
the $0$-dimensional Michael continuous selection theorem 
(Theorem \ref{thm:zeroMichael}). 
\begin{proof}[Proof of Theorem \ref{thm:ultint}]
Let $C\in [1, \infty)$, 
and let $S$ be a $C$-quasi-complete range set. 
Let $X$ be an $S$-valued  ultrametrizable space. 
Let
$\{A_i\}_{i\in I}$ 
be a discrete family of closed subsets of $X$. 
Let   $d\in \ult(X, S)$,   
and let 
$\{e_i\}_{i\in I}$ be a
family 
of  $S$-valued ultrametrics with 
$e_i\in \ult(A_i, S)$. 
We may assume that $S$ possesses at least two elements. 
\par

If 
$\sup_{i\in I}\mathcal{UD}_{A_i}^S(e_{i}, d|_{A_i^2})=\infty$, 
then Theorem \ref{thm:ultint} follows from Lemma \ref{lem:ultam} 
and Theorem \ref{thm:ultex}. 
We may assume  that
$\sup_{i\in I}\mathcal{UD}_{A_i}^S(e_{i}, d|_{A_i^2})<\infty$.
\par

Let $\eta$ be a member in $S$ such that 
\[
\sup_{i\in I}\mathcal{UD}_{A_i}^S(e_{i}, d|_{A_i^2})\le \eta\le C\cdot  \sup_{i\in I}\mathcal{UD}_{A_i}^S(e_{i}, d|_{A_i^2}).
\] 
Let $\{B_i\}_{i\in I}$,  
and let
$\tau: \coprod_{i\in I}A_i\to \coprod_{i\in I}B_i$ 
be the same family and the same map as in Lemma \ref{lem:key}, respectively. 
Put 
$Z=X\sqcup\coprod_{i\in I}B_i$. 
By Lemma \ref{lem:key},  
we find an 
$S$-valued ultrametric $h$ on $Z$ such that
\begin{enumerate}
	\item for every $i\in I$ we have $h|_{B_i^2}=(\tau_i^{-1})^*e_i$; 
	\item $h|_{X^2}=d$; 
	\item for every 
	$x\in \coprod_{i\in I}A_i$ 
	we have 
	$h(x, \tau(x))= \eta$. 
\end{enumerate}
By Theorem \ref{thm:ultemb}, 
we can take an isometric embedding 
$H$ 
from 
$(Z, h)$ 
into a complete 
$S$-valued  ultra-normed $\zz$-module
$(Y, D_Y)$. 
Define a map 
$\phi: Z\to \clo(Y)$ 
by 
$\phi(x)=B(H(x), \eta)$. 
By Corollary \ref{cor:ultlsc}, 
the map $\phi$ is lower semi-continuous. 
We define a map 
$f:\bigcup_{i\in I}A_i\to Y$ by $f_i(x)=H(\tau(x))$. 
Then $f$ is continuous. 
By the property  
$(3)$ 
of 
$h$,  
for every 
$x\in \bigcup_{i\in I}A_i$ 
we have 
$f(x)\in \phi(x)$. 
\par

Since $(Y, D_Y)$ is complete, 
 we can  apply  
 the $0$-dimensional Michael continuous selection  
 theorem (Theorem \ref{thm:zeroMichael})
to the map $f$, 
and hence
we obtain a continuous map
$F:X\to Y$ 
such that 
$F|_{\bigcup_{i\in I}A_i}=f$ 
and for every 
$x\in X$ 
we have 
$F(x)\in \phi(x)$. 
Note that 
$F(x)\in \phi(x)$ 
means that 
$D_Y(F(x), H(x))\le \eta$. 

By Lemma \ref{lem:ultam},  
we obtain an ultrametric 
$k\in \ult(\coprod_{i\in I}A_i, S)$ 
such that for every 
$i\in I$ 
we have 
$k|_{A_i^2}=e_i$. 
Since the 
$S$-valued ultrametric $k$ generates the same topology as
$\coprod_{i\in I}A_i$, and since 
$\coprod_{i\in I}A_i$ 
is closed in $X$ (see Proposition \ref{prop:disclosed}), 
we can apply Theorem \ref{thm:ultex} to the $S$-valued ultrametric $k$,  
and hence  there exists an $S$-valued  ultrametric 
$r\in \ult(X, S)$ 
such that for every 
$i\in I$ 
we have  
$r|_{A_i^2}=e_i$. 
Put 
$l=\min\{r, \eta\}$. 
Note that by Lemma \ref{lem:ultland}, 
we have  $l\in \ult(X, S)$. 
\par

Put 
$D=D_Y\times_{\infty} l$.
Then $D$ is an $S$-valued  ultrametric on 
$Y\times X$.  
Take a base point 
$o\in X$. 
Define a map 
$E: X\to Y\times X$ 
by 
	\[
	E(x)=(F(x), x), 
	\]
Since the second component of $E$ is a topological embedding, 
so is $E$.  
\par

We also define a map 
$K: X\to Y\times X$ 
by 
	\[
	K(x)=(H(x), o). 
	\]
Then, 
by the definition of the ultrametric $D$ on
$Y\times X$,  
the map 
$K$ 
from 
$(X, d)$ 
to 
$(Y\times X, D)$ 
is an isometric embedding. 
Since for every 
$x\in X$ 
we have 
$D_Y(F(x), H(x))\le \eta$ 
and  
$l(x, o)\le \eta$, 
we obtain 
	\[
	D(E(x), K(x))=D_Y(F(x), H(x))\lor l(x, o)\le \eta.
	\] 
	\par

Define a function 
$m:X^2\to [0, \infty)$ 
by 
$m(x, y)=D(E(x), E(y))$, 
then $m$ is an $S$-valued  ultrametric on $X$. 
Since $E$ is a topological embedding, 
we see that 
$m\in \ult(X, S)$. 
For every 
$i\in I$,  
and for all 
$x, y\in A_i$, 
we have 
$D_Y(F(x), F(y))=e_i(x, y)$ 
and 
	\[
	l(x, y)\le r(x, y)=e_i(x, y);
	\] 
thus we obtain 
	\[
	D(E(x), E(y))=D_Y(F(x), F(y)) \lor l(x, y)=e_i(x, y), 
	\]
and hence 
$m|_{A_i^2}=e_i$. 
Moreover,  
we have 
\[
\sup_{i\in I}\mathcal{UD}_{A_i}^S(e_{i}, d|_{A_i^2})\le \mathcal{UD}_X^S(m, d).
\] 
We also obtain the inequality 
$\mathcal{UD}_X^S(m, d)\le \eta$;
indeed, 
for all 
$x, y\in X$, 
\begin{align*}
m(x, y)&=D(E(x), E(y))\\
& \le D(E(x), K(x))\lor D(K(x), K(y))\lor D(K(y), E(y)) \\
&\le D(K(x), K(y))\lor \eta=d(x, y)\lor \eta,  
\end{align*}
and 
\begin{align*}
d(x, y)&=D(K(x), K(y))\\ 
& \le D(K(x), E(x))\lor D(E(x), E(y))\lor D(E(y), K(y)) \\
&\le D(E(x),E(y))\lor \eta= m(x, y)\lor \eta. 
\end{align*}
Therefore $\mathcal{UD}_X^S(m, d)\le \eta$, and hence we conclude that 
\[
\sup_{i\in I}\mathcal{UD}_{A_i}^S(e_{i}, d|_{A_i^2})\le 
\mathcal{UD}_X^S(m, d)\le 
C\cdot \sup_{i\in I}\mathcal{UD}_{A_i}^S(e_{i}, d|_{A_i^2}).
\] 
This completes the proof of the  former 
part of Theorem \ref{thm:ultint}. 
\par

By the latter part of Theorem \ref{thm:ultex}, 
we can choose $l$ as a complete $S$-valued  ultrametric. 
Then $m$ becomes a complete $S$-valued  ultrametric. 
This leads to the proof of the  latter part of 
Theorem \ref{thm:ultint}. 
\end{proof}

In Theorem \ref{thm:ultint},  
by letting $I$ be a singleton, 
we obtain the following:
\begin{cor}\label{cor:ultmetext}
Let $C\in [1, \infty)$, and let $S$ be a $C$-quasi-complete range set. 
Let $X$ be an $S$-valued  ultrametrizable space,  
and let $A$ be a closed subset of $X$.
Then  
for every  
$d\in \ult(X, S)$, 
and for every  
$e\in \ult(A, S)$, 
there exists  an ultrametric 
$m\in \ult(X, S)$ 
satisfying the following:
\begin{enumerate}
	\item $m|_{A^2}=e$; 
	\item $\mathcal{UD}_A^S(e, d|_{A^2})\le \mathcal{UD}_X^S(m, d)\le 
	C\cdot \mathcal{UD}_A^S(e, d|_{A^2})$. 
\end{enumerate}
Moreover, 
if $X$ is completely metrizable,  
and if 
$e\in \ult(A, S)$ 
is  a complete $S$-valued ultrametric, 
then we can choose 
$m\in \ult(X, S)$ as a complete metric. 
\end{cor}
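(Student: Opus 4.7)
The plan is to derive this corollary as an immediate specialization of Theorem \ref{thm:ultint}, taking the index set $I$ to be a singleton. More precisely, I would set $I=\{*\}$, take $A_* = A$ (a single closed subset of $X$, which trivially forms a discrete family of closed subsets since every point of $X$ has $X$ itself as a neighborhood meeting at most one member of the one-element family), and take $e_* = e$. With these choices, the hypotheses of Theorem \ref{thm:ultint} reduce to exactly the hypotheses of Corollary \ref{cor:ultmetext}.

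Applying Theorem \ref{thm:ultint} then yields an ultrametric $m \in \ult(X, S)$ such that $m|_{A^2} = e_* = e$, giving condition (1). For condition (2), note that since $I$ is a singleton the supremum collapses to a single value:
\[
\sup_{i \in I} \mathcal{UD}_{A_i}^S(e_i, d|_{A_i^2}) = \mathcal{UD}_A^S(e, d|_{A^2}).
\]
Substituting into the double inequality provided by Theorem \ref{thm:ultint} gives exactly
\[
\mathcal{UD}_A^S(e, d|_{A^2}) \le \mathcal{UD}_X^S(m, d) \le C \cdot \mathcal{UD}_A^S(e, d|_{A^2}).
\]

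The moreover clause transfers verbatim: if $X$ is completely metrizable and $e$ is complete, then the corresponding hypotheses of Theorem \ref{thm:ultint} are satisfied (each $e_i$ is complete, since there is only one and it is $e$), so $m$ may be chosen to be a complete $S$-valued ultrametric on $X$. Since every step is a direct substitution, there is essentially no obstacle to overcome; the entire content of the corollary is already packaged inside Theorem \ref{thm:ultint}, and the only thing to verify is the harmless observation that a one-element family of closed sets is automatically discrete.
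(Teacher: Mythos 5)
Your proposal is correct and is exactly the paper's own derivation: the corollary is stated immediately after Theorem \ref{thm:ultint} with the remark that it follows by letting $I$ be a singleton, and your verification that a one-element family of closed sets is discrete and that the supremum collapses is all that is needed.
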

%%%%%%%%%%%%%%%%%%%%%%%%%%%%%%%%%%%%%%%
%%%%%%%%%%%%%%%%%%%%%%%%%%%%%%%%%%%%%%%
%%%%%%%%%%%%%%%%%%%%%%%%%%%%%%%%%%%%%%%
%%%%%%%%%%%%%%%%%%%%%%%%%%%%%%%%%%%%%%%
%%%%%%%%%%%%%%%%%%%%%%%%%%%%%%%%%%%%%%%
%%%%%%%%%%%%%%%%%%%%%%%%%%%%%%%%%%%%%%%
%%%%%%%%%%%%%%%%%%%%%%%%%%%%%%%%%%%%%%%
%%%%%%%%%%%%%%%%%%%%%%%%%%%%%%%%%%%%%%%

\section{Transmissible properties and ultrametrics}\label{sec:ulttrans}
In this section,  
we introduce the transmissible property, 
originally defined 
in \cite{IsI}, 
and we prove Theorem \ref{thm:ulttrans} concerning dense $G_{\delta}$ subsets of 
spaces of ultrametrics.

\subsection{Transmissible properties on metric spaces}\label{subsec:61}

Let  
$\mathcal{P}^*(\nn)$ 
be  the set of all non-empty subsets of  $\nn$. 
For a  topological space $T$, 
we denote by 
$\mathcal{F}(T)$ 
the set of all closed subsets of $T$. 
For a subset 
$W\in \mathcal{P}^*(\nn)$, 
and for a  set $E$, 
we denote by 
$\seq(W, E)$ 
the set of all finite injective sequences 
$\{a_i\}_{i=1}^n$ 
in  
$E$ with 
$n\in W$.

\begin{df}[\cite{IsI}]\label{def:transp}
Let $Q$ be an at most countable set, 
$P$ a topological space. 
Let 
$F: Q\to \mathcal{F}(P)$ 
and 
$G: Q\to \mathcal{P}^*(\nn)$ 
be maps.
Let $Z$ be a set.  
Let $\phi$ be a correspondence 
assigning a pair 
$(q, X)$  of 
$q\in Q$ 
and a metrizable space $X$ to a map 
$\phi^{q, X}:\seq(G(q), X)\times Z\times \met(X) \to P$. 
We say that 
a sextuple 
$(Q, P, F, G, Z, \phi)$ 
is 
a \emph{transmissible parameter}
if 
for every metrizable space $X$, for every $q\in Q$, 
and
 for every $z\in Z$  the following are satisfied:
\begin{enumerate} 
	\item[(TP1)]\label{item:tp1}
	for every 
	$a\in \seq(G(q), X)$ 
	the map 
	$\phi^{q, X}(a, z):\met(X)\to P$ 
	defined by 
	$\phi^{q, X}(a, z)(d)=\phi^{q, X}(a, z, d)$ 
	is continuous, 
	where $\met(X)$ is equipped with the topology 
	induced from $\mathcal{D}_X$; 
	\item[(TP2)]\label{item:tp2}
	 for every  
	 $d\in \met(X)$, 
	if  $S$ is a subset of $X$ and 
	$a\in \seq(G(q), S)$, 
	 then we have 
	$\phi^{q, X}(a, z, d)=\phi^{q, S}(a, z, d|_{S^2})$. 
	\end{enumerate}
\end{df}
We introduce a property determined by a transmissible parameter. 

\begin{df}[\cite{IsI}]\label{def:transpro}
Let 
$\mathfrak{G}=(Q, P, F, G, Z, \phi)$ 
be a transmissible parameter. 
Let $(X, d)$ be a metric space. 
We say that  $(X, d)$ satisfies 
the \emph{$\mathfrak{G}$-transmissible  property} if 
there exists  
$q\in Q$ 
such that for every 
$z\in Z$ 
and for every 
$a\in \seq(G(q), X)$ 
we have 
$\phi^{q, X}(a, z, d)\in F(q)$. 
We say that 
$(X, d)$ 
satisfies the \emph{anti-$\mathfrak{G}$-transmissible property} if 
$(X, d)$  satisfies the negation of the $\mathfrak{G}$-transmissible property; namely, 
for every 
$q\in Q$ 
there exist 
$z\in Z$ 
and 
$a\in \seq(G(q), X)$ 
with 
$\phi^{q, X}(a, z, d)\in P\setminus F(q)$. 
A property on metric spaces is  
a \emph{transmissible property} 
(resp.~\emph{anti-transmissible property})
if it is equivalent to the $\mathfrak{G}$-transmissible property 
(resp.~anti-$\mathfrak{G}$-transmissible property) 
for some
transmissible parameter $\mathfrak{G}$. 
\end{df}

By the condition (TP2) in Definition \ref{def:transp}, 
we obtain the following:
\begin{lem}\label{lem:here}
Let $\mathfrak{G}$ be a transmissible parameter. 
If a metric space 
$(X, d)$ 
satisfies the 
$\mathfrak{G}$-transmissible property, 
then so does every metric subspace of 
$(X, d)$. 
\end{lem}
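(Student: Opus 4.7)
The plan is to unwind the definitions and observe that the conclusion is essentially a direct consequence of axiom (TP2) in Definition \ref{def:transp}. Fix a transmissible parameter $\mathfrak{G}=(Q,P,F,G,Z,\phi)$, suppose $(X,d)$ satisfies the $\mathfrak{G}$-transmissible property, and let $S\subseteq X$ be any subset equipped with the restricted metric $d|_{S^2}$. By the definition of the $\mathfrak{G}$-transmissible property, there exists some $q\in Q$ such that $\phi^{q,X}(a,z,d)\in F(q)$ holds for every $z\in Z$ and every $a\in \seq(G(q),X)$. I will use this very same $q$ as the witness for $(S,d|_{S^2})$.

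Next, I would fix an arbitrary $z\in Z$ and an arbitrary $a\in \seq(G(q),S)$, and show $\phi^{q,S}(a,z,d|_{S^2})\in F(q)$. Since $S\subseteq X$, every finite injective sequence in $S$ is a finite injective sequence in $X$, so $a\in \seq(G(q),X)$ as well. Axiom (TP2), applied to the subset $S$ of $X$ and the sequence $a\in \seq(G(q),S)$, gives the equality
\[
\phi^{q,X}(a,z,d)=\phi^{q,S}(a,z,d|_{S^2}).
\]
Combining this equality with the hypothesis that $\phi^{q,X}(a,z,d)\in F(q)$ yields $\phi^{q,S}(a,z,d|_{S^2})\in F(q)$, which is exactly what was required for $(S,d|_{S^2})$ to satisfy the $\mathfrak{G}$-transmissible property with the same witness $q$.

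There is really no hard step here; the entire content of the lemma is that (TP2) was built precisely to encode the hereditary nature of such properties, and axiom (TP1) plays no role. If there is any subtle point, it is only to notice that $\seq(G(q),S)\subseteq \seq(G(q),X)$, which is immediate from the definition of $\seq(W,E)$ as the set of finite injective sequences in $E$ with length in $W$. Thus the proof is a one-line invocation of (TP2), and I would present it as such without further elaboration.
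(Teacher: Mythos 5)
Your proof is correct and matches the paper's approach exactly: the paper gives no written proof beyond the remark that the lemma follows from condition (TP2), and your unwinding of the definitions (keeping the same witness $q$, noting $\seq(G(q),S)\subseteq\seq(G(q),X)$, and invoking (TP2)) is precisely the intended argument.
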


\subsection{Transmissible properties on ultrametric spaces}\label{subsec:62}

The following concept is an 
$S$-valued ultrametric version of the singularity of 
the transmissible parameters
(see  \cite[Definition 1.3]{IsI}). 
\begin{df}\label{def:ultsing}
Let $S$ be  a range set. 
Let 
$\mathfrak{G}=(Q, P, F, G, Z, \phi)$ 
be a transmissible parameter. 
We say that 
$\mathfrak{G}$ 
is 
\emph{$S$-ultra-singular} 
if for each 
$q\in Q$ 
and for every 
$\epsilon\in (0, \infty)$
there exist 
$z\in Z$,  a finite $S$-valued ultrametric  space 
$(R, d_{R})$, and 
an index $R=\{r_i\}_{i=1}^{\card(R)}$
such that 
\begin{enumerate}
	\item 
	$\delta_{d_{R}}(R)\le \epsilon$;
	\item 
	$\card(R)\in G(q)$; 
	\item 
	$\phi^{q, R}\left(\{r_i\}_{i=1}^{\card(R)}, z, d_R\right)\in P\setminus F(q)$. 
\end{enumerate}
\end{df}

By the definitions of 
$\mathcal{D}_X$ 
and 
$\mathcal{UD}_X^S$, 
we obtain:
\begin{lem}\label{lem:ultandmet}
Let $S$ be a range set. 
For every $S$-valued  ultrametrizable space $X$,  
and for all $d, e\in \ult(X, S)$
 we have 
\[
\mathcal{D}_X(d, e)\le \mathcal{UD}_X^S(d, e).
\] 
In particular, the identity map 
\[
1_{\ult(X, S)}:(\ult(X, S), \mathcal{UD}_X^S)
\to 
\left(\ult(X, S), \mathcal{D}_X|_{\ult(X, S)^2}\right)
\]
 is continuous. 
\end{lem}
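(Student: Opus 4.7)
The plan is to prove the inequality $\mathcal{D}_X(d,e) \le \mathcal{UD}_X^S(d,e)$ directly from the defining conditions of $\mathcal{UD}_X^S$, and then deduce continuity of the identity map as a trivial consequence of the Lipschitz-type estimate.

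First I would reduce to showing that for every $\epsilon \in S \sqcup \{\infty\}$ satisfying the two conditions in the definition of $\mathcal{UD}_X^S(d,e)$, and for every $(x,y) \in X^2$, we have $|d(x,y) - e(x,y)| \le \epsilon$. Once this is established, taking the supremum over $(x,y) \in X^2$ gives $\mathcal{D}_X(d,e) \le \epsilon$, and then taking the infimum over admissible $\epsilon$ yields $\mathcal{D}_X(d,e) \le \mathcal{UD}_X^S(d,e)$. The case $\mathcal{UD}_X^S(d,e) = \infty$ is vacuous, and when it is finite the infimum may not be attained, but since the set of admissible $\epsilon$ is cofinal at $\mathcal{UD}_X^S(d,e)$ from above, a limiting argument finishes the job.

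The key computation is a two-case analysis. Fix such an $\epsilon$ and a pair $(x,y)$. By symmetry in $d$ and $e$, I may assume $d(x,y) \ge e(x,y)$, so that $|d(x,y)-e(x,y)| = d(x,y) - e(x,y)$. The defining inequality $d(x,y) \le e(x,y) \lor \epsilon$ splits into two subcases. If $e(x,y) \ge \epsilon$, then the right-hand side equals $e(x,y)$, forcing $d(x,y) = e(x,y)$ and hence $|d(x,y)-e(x,y)| = 0 \le \epsilon$. If instead $e(x,y) < \epsilon$, then the right-hand side equals $\epsilon$, so $d(x,y) \le \epsilon$ and therefore $d(x,y) - e(x,y) \le \epsilon - e(x,y) \le \epsilon$. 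Either way $|d(x,y)-e(x,y)| \le \epsilon$, as desired.

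For the ``in particular'' clause, the inequality $\mathcal{D}_X(d,e) \le \mathcal{UD}_X^S(d,e)$ on $\ult(X,S)^2$ implies that every $\mathcal{D}_X$-open ball about a point contains the corresponding $\mathcal{UD}_X^S$-open ball about that point, which is exactly the continuity of the identity map from $(\ult(X,S), \mathcal{UD}_X^S)$ to $(\ult(X,S), \mathcal{D}_X|_{\ult(X,S)^2})$. There is no real obstacle here; the only minor care required is the handling of the infimum over the range set $S \sqcup \{\infty\}$, which is a routine $\delta$-argument.
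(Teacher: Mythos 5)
Your proof is correct and is essentially the argument the paper has in mind: the paper states this lemma as immediate ``by the definitions of $\mathcal{D}_X$ and $\mathcal{UD}_X^S$'', and your two-case analysis of $d(x,y)\le e(x,y)\lor\epsilon$ is exactly the computation being elided. (The only minor remark is that your closing ``limiting argument'' is unnecessary: once $\mathcal{D}_X(d,e)\le\epsilon$ holds for every admissible $\epsilon$, the quantity $\mathcal{D}_X(d,e)$ is a lower bound of the admissible set and hence is at most its infimum, with no cofinality needed.)
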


Let $S$ be a range set. 
Let $X$ be an $S$-valued  ultrametrizable space, 
and let 
$\mathfrak{G}=(Q, P, F, G, Z, \phi)$ 
be a transmissible  parameter. 
For $q\in Q$, 
for 
$a\in \seq(G(q), X)$ 
and for
$z\in Z$, 
we denote by 
$US( X, S, \mathfrak{G}, q, a, z)$ 
the set of all 
$d\in \ult(X, S)$ 
such that 
$\phi^{q, X}(a, z, d)\in P\setminus F(q)$. 
We also denote by 
$US(X, S,  \mathfrak{G})$ 
the set of all 
$d\in \ult(X, S)$ 
such that 
$(X, d)$ 
satisfies the anti-$\mathfrak{G}$-transmissible property. 

\begin{prop}\label{prop:ultopen}
Let $S$ be a range set. 
Let $X$ be an ultrametrizable space,  and let 
$\mathfrak{G}=(Q, P, F, G, Z, \phi)$  be 
 a transmissible  parameter. 
Then 
for all 
$q\in Q$, 
$a\in \seq(G(q), X)$ and $z\in Z$, 
the set 
$US(X, S, \mathfrak{G}, q, a, z)$
is open in 
$(\ult(X, S), \mathcal{UD}_X^S)$. 
\end{prop}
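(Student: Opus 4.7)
The plan is to reduce the statement to a direct application of (TP1) in Definition \ref{def:transp} together with Lemma \ref{lem:ultandmet}. Fix $q\in Q$, $a\in \seq(G(q), X)$ and $z\in Z$, and let $\Phi\colon \met(X)\to P$ denote the map $\Phi(d)=\phi^{q,X}(a, z, d)$. By the definition of $US(X, S, \mathfrak{G}, q, a, z)$ we have
\[
US(X, S, \mathfrak{G}, q, a, z)=\Phi^{-1}(P\setminus F(q))\cap \ult(X, S).
\]
Since $F(q)$ is a closed subset of $P$, its complement $P\setminus F(q)$ is open in $P$.

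First, I would invoke the condition (TP1) in Definition \ref{def:transp}, which guarantees that $\Phi$ is continuous from $(\met(X), \mathcal{D}_X)$ to $P$. Consequently $\Phi^{-1}(P\setminus F(q))$ is open in $(\met(X), \mathcal{D}_X)$, and therefore its intersection with $\ult(X, S)$ is open in the subspace topology induced on $\ult(X, S)$ by $\mathcal{D}_X$.

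Finally, I would apply Lemma \ref{lem:ultandmet}, according to which the identity map
\[
1_{\ult(X, S)}\colon (\ult(X, S), \mathcal{UD}_X^S)\to \bigl(\ult(X, S), \mathcal{D}_X|_{\ult(X, S)^2}\bigr)
\]
is continuous. Since continuous maps pull open sets back to open sets, the set $US(X, S, \mathfrak{G}, q, a, z)$, being open in the $\mathcal{D}_X$-topology on $\ult(X,S)$, is also open in $(\ult(X, S), \mathcal{UD}_X^S)$, as required.

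The proof is essentially formal, and there is no real obstacle to speak of: the only point worth being careful about is distinguishing the two topologies (one from $\mathcal{D}_X$, the other from $\mathcal{UD}_X^S$) on $\ult(X, S)$ and observing that Lemma \ref{lem:ultandmet} says the $\mathcal{UD}_X^S$-topology is finer, so that openness transfers in the correct direction.
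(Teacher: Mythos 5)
Your proof is correct and follows essentially the same route as the paper: both use (TP1) to get continuity of $\phi^{q,X}(a,z)$ on $(\met(X),\mathcal{D}_X)$, Lemma \ref{lem:ultandmet} to pass to the $\mathcal{UD}_X^S$-topology on $\ult(X,S)$, and the fact that $F(q)\in\mathcal{F}(P)$ is closed so that $US(X,S,\mathfrak{G},q,a,z)$ is the preimage of an open set.
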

\begin{proof}
Fix $q\in Q$, 
$a\in \seq(G(q), X)$ 
and 
$z\in Z$. 
Since the map 
$\phi^{q, X}(a, z): \met(X)\to P$ 
is continuous,  
Lemma \ref{lem:ultandmet} implies that
the map $\phi^{q, X}(a, z)|_{\ult(X, S)}: \ult(X, S)\to P$ is also continuous, 
where $\ult(X, S)$ is equipped with the topology induced from 
$\mathcal{UD}_X^S$. 
Since 
 \[
 US(X, S, \mathfrak{G}, q, a, z)=(\phi^{q, X}(a, z)|_{\ult(X, S)})^{-1}(P\setminus F(q)), 
 \]
the set $US(X, S, \mathfrak{G}, q, a, z)$ is open in 
 $(\ult(X, S), \mathcal{UD}_X^S)$. 
\end{proof}

%%%
\begin{cor}\label{cor:ultopen}
Let $S$ be a range set.  
Let 
$\mathfrak{G}=(Q, P, F, G, Z, \phi)$ 
be  a transmissible  parameter. 
Let $X$ be an $S$-valued  ultrametrizable space. 
Then 
the set 
$US(X, S,  \mathfrak{G})$ 
is 
$G_{\delta}$ in $\ult(X, S)$.
Moreover, 
if the set $Q$ is finite, 
then 
$US(X, S, \mathfrak{G})$ 
is open in 
$(\ult(X, S), \mathcal{UD}_X^S)$. 
\end{cor}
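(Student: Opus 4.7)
The plan is to reduce the corollary to Proposition \ref{prop:ultopen} by rewriting $US(X, S, \mathfrak{G})$ as a suitable intersection of unions of the sets $US(X, S, \mathfrak{G}, q, a, z)$. Unpacking Definition \ref{def:transpro}, a metric $d \in \ult(X, S)$ lies in $US(X, S, \mathfrak{G})$ if and only if for every $q \in Q$ there exist $z \in Z$ and $a \in \seq(G(q), X)$ with $\phi^{q, X}(a, z, d) \in P \setminus F(q)$. Thus I would first record the identity
\[
US(X, S, \mathfrak{G}) \;=\; \bigcap_{q \in Q} \, \bigcup_{z \in Z} \, \bigcup_{a \in \seq(G(q), X)} US(X, S, \mathfrak{G}, q, a, z),
\]
which is a direct translation of the quantifier structure ``$\forall q\, \exists z\, \exists a$'' appearing in the definition of the anti-$\mathfrak{G}$-transmissible property.

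The next step is to invoke Proposition \ref{prop:ultopen}: each set $US(X, S, \mathfrak{G}, q, a, z)$ is open in $(\ult(X, S), \mathcal{UD}_X^S)$, so for each fixed $q \in Q$ the inner double union is open as a union of open sets. Since the indexing set $Q$ is at most countable by the convention adopted in Definition \ref{def:transp}, the outer intersection is a countable intersection of open sets; this yields that $US(X, S, \mathfrak{G})$ is $G_\delta$ in $(\ult(X, S), \mathcal{UD}_X^S)$. For the moreover clause, if $Q$ is finite the outer intersection collapses to a finite intersection of open sets, which is itself open, giving the stronger openness conclusion.

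Since the argument is essentially a bookkeeping reduction once Proposition \ref{prop:ultopen} is available, there is no substantial obstacle. The only point requiring minor care is to check that the displayed set-theoretic identity really matches the definition of the anti-$\mathfrak{G}$-transmissible property verbatim (the universal quantifier over $q$ becoming $\bigcap_q$ and the existential quantifiers over $z$ and $a$ becoming $\bigcup_{z,a}$), and to recall from Definition \ref{def:transp} that $Q$ is by hypothesis at most countable, which is what makes the outer intersection admissible for a $G_\delta$-description.
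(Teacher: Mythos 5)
Your proposal is correct and matches the paper's own proof: both rewrite $US(X, S, \mathfrak{G})$ as $\bigcap_{q\in Q}\bigcup_{z\in Z}\bigcup_{a\in \seq(G(q), X)}US(X, S, \mathfrak{G}, q, a, z)$ and then apply Proposition \ref{prop:ultopen}, using that $Q$ is at most countable for the $G_{\delta}$ claim and finite for the openness claim. Nothing is missing.
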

\begin{proof}
By the definitions of 
$US(X, S,  \mathfrak{G})$ 
and 
$US(X, S, \mathfrak{G}, q, a, z)$, 
we have 
\[
	US(X, S, \mathfrak{G})
	=
	\bigcap_{q\in Q}\bigcup_{a\in \seq(G(q), X)}\bigcup_{z\in Z}
	US(X, S, \mathfrak{G}, q, a, z). 
\]
This equality together with Proposition \ref{prop:ultopen} proves 
the lemma. 
\end{proof}

We say that a topological space is an \emph{$(\omega_0+1)$-space}
if it is homeomorphic to the one-point compactification of the countable 
discrete topological  space. 
\begin{lem}\label{lem:ultsing}
Let $S$ be a range set with the countable coinitiality. 
A transmissible parameter
 $\mathfrak{G}$ is $S$-ultra-singular  if and only if 
 there exists an 
$S$-valued ultrametric $(\omega_0+1)$-space 
with arbitrary small diameter 
satisfying the  anti-$\mathfrak{G}$-transmissible property. 
\end{lem}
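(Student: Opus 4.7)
The plan is to prove the equivalence by direct construction in both directions, using (TP2) as the bridge between $\phi$ on a finite subspace and on the ambient space. For the $(\Leftarrow)$ direction, which is straightforward, I would fix $q \in Q$ and $\epsilon \in (0, \infty)$ and take the given $S$-valued ultrametric $(\omega_0 + 1)$-space $(Y, e)$ of diameter at most $\epsilon$ satisfying the anti-$\mathfrak{G}$-transmissible property. Applied to $q$, this yields $z \in Z$ and an injective finite sequence $a \in \seq(G(q), Y)$ with $\phi^{q, Y}(a, z, e) \in P \setminus F(q)$. Letting $R$ be the range of $a$, which is a finite $S$-valued ultrametric subspace of $Y$ with $\card(R) \in G(q)$ and $\delta_{e|_{R^2}}(R) \le \delta_e(Y) \le \epsilon$, the transfer identity $\phi^{q, R}(a, z, e|_{R^2}) = \phi^{q, Y}(a, z, e)$ provided by (TP2) immediately gives the three conditions of $S$-ultra-singularity.

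For $(\Rightarrow)$, I would enumerate $Q$ as $\{q_n\}_{n \in \nn}$, allowing repetitions when $Q$ is finite so that the enumeration is infinite. Given $\epsilon \in (0, \infty)$, the countable coinitiality of $S$ provides a strictly decreasing sequence $\{s_n\}_{n \in \nn}$ in $S_{+}$ with $s_1 \le \epsilon$ and $s_n \to 0$. For each $n$, apply $S$-ultra-singularity to $q_n$ with bound $s_{n+1}$ to obtain a finite $S$-valued ultrametric space $(R_n, d_n)$ with $\delta_{d_n}(R_n) \le s_{n+1}$, a parameter $z_n \in Z$, and an index $\{r_{n, i}\}_{i = 1}^{\card(R_n)}$ of $R_n$ witnessing $\phi^{q_n, R_n}(\{r_{n, i}\}_{i=1}^{\card(R_n)}, z_n, d_n) \in P \setminus F(q_n)$. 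After relabelling the $R_n$'s to be pairwise disjoint and fixing a fresh point $\infty$, I would put $X = \{\infty\} \sqcup \coprod_n R_n$ and define
\[
d(x, y) =
\begin{cases}
d_n(x, y) & \text{if } x, y \in R_n; \\
s_n & \text{if } \{x, y\} = \{\infty, r\} \text{ with } r \in R_n; \\
s_{\min(n, m)} & \text{if } x \in R_n,\ y \in R_m,\ n \neq m.
\end{cases}
\]

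The main obstacle is the verification that $d$ is an $S$-valued ultrametric on $X$. The decisive inequality $\delta_{d_n}(R_n) \le s_{n+1} < s_n$ ensures that the inter-component distances $s_{\min(n, m)}$ strictly dominate intra-component ones, so that every triangle is isosceles in the sense of Lemma \ref{lem:isosceles}; a case analysis on the distribution of three points among $\infty$ and the various $R_n$'s will then check the strong triangle inequality. Granting this, $(X, d)$ has diameter $s_1 \le \epsilon$, every point of $\coprod_n R_n$ is isolated (each $R_n$ is finite and positively separated from the rest of $X$), $\infty$ is the unique accumulation point (since $s_n \to 0$), and $X$ is compact because every neighborhood of $\infty$ omits only finitely many of the $R_n$, each of which is itself finite. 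Hence $X$ is homeomorphic to the one-point compactification of a countable discrete space, i.e., an $(\omega_0 + 1)$-space. The anti-$\mathfrak{G}$-transmissibility of $(X, d)$ then follows for each $q = q_n$ by applying (TP2) with witnesses $\{r_{n, i}\}_{i=1}^{\card(R_n)}$ and $z_n$, since $R_n \subseteq X$ and $d|_{R_n^2} = d_n$. The only other bookkeeping is the enumeration trick handling the case when $Q$ is finite, which is harmless.
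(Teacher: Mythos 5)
Your proof is correct and follows essentially the same route as the paper: the backward direction extracts a finite witness via (TP2), and the forward direction builds the same telescope-type $(\omega_0+1)$-space $\{\infty\}\sqcup\coprod_n R_n$ over a surjective enumeration of $Q$ and a coinitial sequence in $S_+$ (your inter-component distance $s_{\min(n,m)}$ coincides with the paper's $r(N+i)\lor r(N+j)$). The only cosmetic difference is that you bound $\delta_{d_n}(R_n)$ by $s_{n+1}$ rather than $s_n$ and verify the strong triangle inequality directly instead of citing the telescope-space lemma.
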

\begin{proof}
Let 
$\mathfrak{G}=(Q, P, F, G, Z, \phi)$. 
First assume that there exists an 
$(\omega_0+1)$-ultrametric space 
with arbitrary small diameter 
satisfying the  anti-$\mathfrak{G}$-transmissible property. 
By the definition of anti-$\mathfrak{G}$-transmissible property, 
we see that $\mathfrak{G}$ is $S$-ultra-singular. 

Next assume that  $\mathfrak{G}$ is $S$-ultra-singular. 
Take a  strictly decreasing sequence $\{r(i)\}_{i\in \nn}$ with 
$\lim_{i\to \infty}r(i)=0$. 
Fix $\epsilon\in (0, \infty)$ and take a surjective map $\theta:\nn\to Q$. 
Take $N\in \nn$ such that for every $n>N$, we have $r(n)<\epsilon$. 
Then there exists a sequence 
$\{(R_i, d_i)\}_{i\in \nn}$ 
of finite ultrametric spaces  such that for each 
$i\in \nn$ 
there exist 
$z_i\in Z$ and an index $R_i=\{r_{i, j}\}_{j=1}^{\card(R_i)}$
satisfying
\begin{enumerate}
	\item $\delta_{d_{i}}(R_i)\le r(N+i)$; 
	\item $\card(R_i)\in G(\theta(i))$; 
	\item $\phi^{\theta(i), R_i}\left(\{r_{i, j}\}_{j=1}^{\card(R_i)}, z_i, d_i\right)\in P\setminus F(\theta(i))$. 
\end{enumerate}
Put
	\[
	L=\{\infty\}\sqcup \coprod_{i\in \nn}R_i, 
	\]
and define a metric 
$d_{L}$ 
on 
$L$ 
by 
\[
d_{L}(x, y)=
\begin{cases}
		d_{i}(x,y) & \text{if $x,y\in X_i$ for some $i$;}\\
		 r(N+i)\lor r(N+j) & \text{if $x\in X_i,y\in X_j$ for some $i\neq j$; }\\
		 r(N+i) & \text{if $x=\infty, y\in X_i$ for some $i$;}\\
		 r(N+i) & \text{if $x\in X_i, y=\infty$ for some $i$.}
	\end{cases}
\]
Note that 
this construction is a specific version of 
the telescope space defined in \cite{IsQ}. 
The $(L, d_L)$ 
is a metric
$(\omega_0+1)$-space with $\delta_{d_L}(L)\le \epsilon$. 
By the definition, 
the metric $d_L$ is an 
$S$-valued  ultrametric (see also \cite[Lemma 3.1]{IsQ}). 
By the properties $(2)$ and $(3)$ of 
$\{(R_i, d_i)\}_{i\in \nn}$, 
the metric space 
$(L, d_L)$ 
satisfies the 
anti-$\mathfrak{G}$-transmissible property. 
\end{proof}

Let $S$ be a range set. 
Let $\mathfrak{G}$ be a transmissible parameter. 
For a non-discrete $S$-valued  ultrametrizable space $X$, 
and for an $(\omega_0+1)$-subspace $R$ of $X$, 
we denote by 
$UT(X, S, R, \mathfrak{G})$ 
the set of all 
$d\in \ult(X, S)$
for which 
$(R, d|_{R^2})$ 
satisfies the anti-$\mathfrak{G}$-transmissible 
property.

Corollary \ref{cor:ultmetext} and 
Lemma \ref{lem:ultsing} imply  the following:
\begin{prop}\label{prop:ultdense}
Let $C\in [1, \infty)$, 
and let $S$ be a $C$-quasi-complete range set with 
the countable coinitiality. 
Let $\mathfrak{G}=(Q, P, F, G, Z, \phi)$ 
be an $S$-ultra-singular transmissible parameter. 
Then for every non-discrete $S$-valued  ultrametrizable space $X$, 
and for every 
$(\omega_0+1)$-subspace 
$R$ of $X$, 
the set 
$UT(X, S, R, \mathfrak{G})$ 
is dense in 
$(\ult(X, S), \mathcal{UD}_X^S)$.  
\end{prop}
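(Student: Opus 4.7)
The plan is to fix $d\in \ult(X, S)$ and show that every open ball around $d$ in $(\ult(X, S), \mathcal{UD}_X^S)$ meets $UT(X, S, R, \mathfrak{G})$. Given $\epsilon\in (0, \infty)$, I will construct $m\in \ult(X, S)$ with $\mathcal{UD}_X^S(m, d)<\epsilon$ such that $(R, m|_{R^2})$ satisfies the anti-$\mathfrak{G}$-transmissible property. The strategy is to install, on a sufficiently small-diameter $(\omega_0+1)$-subspace $R'\subseteq R$, an isometric copy of the anti-$\mathfrak{G}$-transmissible $(\omega_0+1)$-ultrametric space produced by Lemma \ref{lem:ultsing}, and then extend this prescribed metric from $R'$ to all of $X$ via Corollary \ref{cor:ultmetext}.

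Since $S$ has countable coinitiality, I first choose $r\in S_+$ with $C\cdot r<\epsilon$. Lemma \ref{lem:ultsing} combined with the $S$-ultra-singularity of $\mathfrak{G}$ yields an $(\omega_0+1)$-ultrametric space $(L, d_L)$ with $d_L\in \ult(L, S)$, $\delta_{d_L}(L)\le r$, and $(L, d_L)$ satisfying anti-$\mathfrak{G}$. Writing $R=\{x_\infty\}\sqcup \{x_n\}_{n\in \nn}$ with $x_n\to x_\infty$ in $(X, d)$, I extract a subsequence and obtain $R'=\{x_\infty\}\sqcup \{x_{n_k}\}_{k\in \nn}$ with $d(x_{n_k}, x_\infty)\le r$ for every $k$; then $R'$ is a compact, hence closed, $(\omega_0+1)$-subspace of $X$ and $\delta_d(R')\le r$. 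Fixing any homeomorphism $\tau: R'\to L$ and setting $e=\tau^{*}d_L$, the space $(R', e)$ is isometric to $(L, d_L)$, so $e\in \ult(R', S)$ and $(R', e)$ also satisfies anti-$\mathfrak{G}$. Since both $e$ and $d|_{(R')^2}$ take values in $[0, r]$ on $R'$, the definition of $\mathcal{UD}$ yields $\mathcal{UD}_{R'}^S(e, d|_{(R')^2})\le r$. Applying Corollary \ref{cor:ultmetext} to the closed set $R'\subseteq X$, I obtain $m\in \ult(X, S)$ with $m|_{(R')^2}=e$ and $\mathcal{UD}_X^S(m, d)\le C\cdot r<\epsilon$.

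To conclude that $m\in UT(X, S, R, \mathfrak{G})$, I verify that $(R, m|_{R^2})$ satisfies the anti-$\mathfrak{G}$-transmissible property. Fix $q\in Q$; by the anti-$\mathfrak{G}$ property of $(R', e)$, there exist $z\in Z$ and $a\in \seq(G(q), R')$ with $\phi^{q, R'}(a, z, e)\in P\setminus F(q)$. Since $R'\subseteq R$, we have $a\in \seq(G(q), R)$, and condition (TP2) of Definition \ref{def:transp} gives
\[
\phi^{q, R}(a, z, m|_{R^2})=\phi^{q, R'}(a, z, m|_{(R')^2})=\phi^{q, R'}(a, z, e)\in P\setminus F(q),
\]
which is precisely what is needed (equivalently, this is the contrapositive of Lemma \ref{lem:here}). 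The main obstacle I anticipate is purely bookkeeping: ensuring that $R'$ is genuinely closed in $X$ and homeomorphic to $\omega_0+1$, and correctly propagating the smallness of $r$ through the definitions of $\delta_d$, $\mathcal{UD}_{R'}^S$, $\mathcal{UD}_X^S$, and the constant $C$. There are no conceptual surprises beyond combining Lemmas \ref{lem:ultsing} and \ref{lem:here} with Corollary \ref{cor:ultmetext}.
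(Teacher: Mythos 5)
Your proposal is correct and follows essentially the same route as the paper: shrink to an $(\omega_0+1)$-subspace of $R$ of small $d$-diameter, install on it the small anti-$\mathfrak{G}$ ultrametric from Lemma \ref{lem:ultsing}, extend via Corollary \ref{cor:ultmetext} with the $C\cdot r$ control, and conclude by hereditariness (Lemma \ref{lem:here}/(TP2)). Your extra care in taking $r\in S_+$ before bounding $\mathcal{UD}_{R'}^S$ is a harmless (indeed slightly cleaner) refinement of the paper's argument.
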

\begin{proof}
Let $\epsilon\in (0, \infty)$ be an arbitrary number. 
Let $d\in \ult(X, S)$.  
Take an $(\omega_0+1)$ subspace $L$ of $R$ with 
$\delta_d(L)\le \epsilon$. 
By Lemma \ref{lem:ultsing}, 
there exists an $S$-valued  ultrametric $e\in \ult(L, S)$ with 
$\delta_{e}(L)\le \epsilon$ 
such that $(L, e)$ satisfies the 
anti-$\mathfrak{G}$-transmissible property.  
Since $\delta_d(L)\le \epsilon$ and $\delta_{e}(L)\le \epsilon$, 
by the definition of $\mathcal{UD}_L^S$ we have 
$\mathcal{UD}_L^S(d|_{L^2}, e)\le \epsilon$. 
By applying  Corollary \ref{cor:ultmetext} to $d$ and $e$, 
we obtain an $S$-valued ultrametric $m\in \ult(X, S)$ with 
\begin{enumerate}
\item $m|_{L^2}=e$;
\item $\mathcal{UD}_X^S(d, m)\le C\cdot\mathcal{UD}_L^S(d|_{L^2}, e)\le C\cdot \epsilon$. 
\end{enumerate}
By Lemma \ref{lem:here}, 
we see that $(X, m)$ satisfies the 
anti-$\mathfrak{G}$-transmissible property.  
Since $\epsilon$ is arbitrary and $m\in UT(X, S, R, \mathfrak{G})$, the proposition follows. 
\end{proof}

\begin{thm}\label{thm:ulttrans}
Let 
 $S$ be a quasi-complete range set with the countable coinitiality. 
Let 
$\mathfrak{G}$ 
be an $S$-ultra-singular transmissible parameter. 
Then for every non-discrete $S$-valued %%%%%%%%MMMM
 ultrametrizable space $X$, 
the set of all 
$d\in \ult(X, S)$ 
for which 
$(X, d)$ 
satisfies the
anti-$\mathfrak{G}$-transmissible property
 is dense 
 $G_{\delta}$ in
 the space  $(\ult(X, S), \mathcal{UD}_X^S)$.  
\end{thm}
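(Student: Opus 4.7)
The plan is to recognize that the set in the statement coincides with $US(X, S, \mathfrak{G})$ and then assemble two results from the preceding subsection. First, Corollary \ref{cor:ultopen} applies directly to give that $US(X, S, \mathfrak{G})$ is a $G_{\delta}$ subset of $(\ult(X, S), \mathcal{UD}_X^S)$, so the $G_{\delta}$ half requires no further work.

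For density, I would exploit the non-discreteness of $X$ as follows. Since $X$ is $S$-valued ultrametrizable and non-discrete, $X$ contains a non-isolated point $p$, and because $X$ is Hausdorff, I can choose a sequence $\{x_n\}_{n\in \nn}$ of pairwise distinct points in $X\setminus\{p\}$ with $x_n\to p$ in any (hence every) compatible ultrametric. The subspace $R = \{p\}\cup \{\, x_n\mid n\in \nn\, \}$ of $X$ is then homeomorphic to the one-point compactification of a countable discrete space, so $R$ is an $(\omega_0+1)$-subspace of $X$. Proposition \ref{prop:ultdense}, applied to this $R$, yields that $UT(X, S, R, \mathfrak{G})$ is dense in $(\ult(X, S), \mathcal{UD}_X^S)$.

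It remains only to connect $UT(X, S, R, \mathfrak{G})$ with $US(X, S, \mathfrak{G})$. By Lemma \ref{lem:here} the $\mathfrak{G}$-transmissible property passes to subspaces; taking the contrapositive, if a subspace $(R, d|_{R^2})$ satisfies the anti-$\mathfrak{G}$-transmissible property, then so does the ambient space $(X, d)$. Hence $UT(X, S, R, \mathfrak{G}) \subseteq US(X, S, \mathfrak{G})$, and density of the former immediately forces density of the latter. Combined with the $G_{\delta}$ observation above, this completes the argument.

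There is no real obstacle at the level of this final synthesis; the conceptual difficulty has already been absorbed into the two inputs. The density ingredient, Proposition \ref{prop:ultdense}, is the substantive step: it relies on the $S$-ultra-singularity of $\mathfrak{G}$ through Lemma \ref{lem:ultsing} to produce an $(\omega_0+1)$-ultrametric model of arbitrarily small diameter witnessing the anti-$\mathfrak{G}$-transmissible property, and on Corollary \ref{cor:ultmetext}, the $S$-valued ultrametric extension theorem with controlled $\mathcal{UD}_X^S$ distortion, to graft that model onto any prescribed $d\in \ult(X, S)$. The $G_{\delta}$ ingredient, Corollary \ref{cor:ultopen}, uses the continuity condition (TP1) from Definition \ref{def:transp} together with Lemma \ref{lem:ultandmet} to show that each $\phi^{q, X}(a, z)$ remains continuous on the subspace $\ult(X, S)$ with its $\mathcal{UD}_X^S$-topology. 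Once both of these are granted, Theorem \ref{thm:ulttrans} is a short topological corollary.
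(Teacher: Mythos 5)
Your proposal is correct and follows essentially the same route as the paper: identify the target set with $US(X, S, \mathfrak{G})$, get the $G_{\delta}$ part from Corollary \ref{cor:ultopen}, extract an $(\omega_0+1)$-subspace $R$ from non-discreteness, and deduce density from Proposition \ref{prop:ultdense} via the inclusion $UT(X, S, R, \mathfrak{G})\subseteq US(X, S, \mathfrak{G})$. The only difference is that you spell out the construction of $R$ and justify the inclusion explicitly by the contrapositive of Lemma \ref{lem:here}, where the paper simply says ``by the definitions''; this is a correct and slightly more careful rendering of the same argument.
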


\begin{proof}
Let $C\in [1, \infty)$, 
and let $S$ be a $C$-quasi-complete range set with 
the countable coinitiality. 
Let $X$ be a non-discrete metrizable space,  
and 
let 
$\mathfrak{G}$ 
be an $S$-ultra-singular transmissible parameter. 
Since $X$ is non-discrete, 
there exists an $(\omega_0+1)$-subspace 
$R$ of $X$. 
By the definitions, we have 
	\[
	UT(X, S,  R, \mathfrak{G})\subseteq US(X, S,  \mathfrak{G}). 
	\] 
From Proposition \ref{prop:ultdense} and Corollary \ref{cor:ultopen}, 
 it follows that 
 $US(X, S, \mathfrak{G})$ 
 is dense 
 $G_{\delta}$ 
 in 
 $(\ult(X, S), \mathcal{UD}_{X}^S)$. 
 This finishes the proof. 
\end{proof}

For a range set $S$, 
and 
for a complete metrizable space $X$, 
we denote by $\cu(X, S)$ the set of 
all complete metrics in $\ult(X, S)$. 
From  the latter part of Corollary \ref{cor:ultmetext}, 
we deduce the following:
\begin{thm}\label{thm:ultcm1}
Let $S$ be a quasi-complete range set with 
the countable coinitiality. 
Let 
$\mathfrak{G}$ 
be an $S$-ultra-singular transmissible parameter. 
For every non-discrete completely ultrametrizable space $X$, 
the set of all 
$d\in \cu(X, S)$ 
for which 
$(X, d)$ 
satisfies 
the 
anti-$\mathfrak{G}$-transmissible property 
is dense 
$G_{\delta}$ 
in  the ultrametric space 
$\left(\cu(X, S), \mathcal{UD}_X^S|_{\cu(X, S)^2}\right)$.  
\end{thm}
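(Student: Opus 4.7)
The plan is to transcribe the proof of Theorem \ref{thm:ulttrans} into the complete setting, invoking the moreover clauses of Corollary \ref{cor:ultmetext} wherever completeness must be preserved. Fix a constant $C\in [1, \infty)$ witnessing $C$-quasi-completeness of $S$. Since $X$ is non-discrete and ultrametrizable, a non-trivial convergent sequence together with its limit yields a compact $(\omega_0+1)$-subspace of $X$; any tail together with the limit produces, for arbitrarily small diameter, a compact (hence closed in $X$) $(\omega_0+1)$-subspace. Note also that every $S$-valued ultrametric on a compact space is automatically complete.

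The density of the anti-$\mathfrak{G}$-transmissible members of $\cu(X, S)$ would be established as an analogue of Proposition \ref{prop:ultdense}. Given $d\in \cu(X, S)$ and $\epsilon>0$, I would pick a closed $(\omega_0+1)$-subspace $L\subseteq X$ with $\delta_d(L)\le \epsilon$. Lemma \ref{lem:ultsing}, applied via the $S$-ultra-singularity of $\mathfrak{G}$, supplies $e\in \ult(L, S)$ with $\delta_e(L)\le \epsilon$ for which $(L, e)$ is anti-$\mathfrak{G}$-transmissible; compactness of $L$ makes $e$ automatically complete. The moreover clause of Corollary \ref{cor:ultmetext} applied to the closed subset $L\subseteq X$, to the complete ambient metric $d\in \cu(X, S)$, and to the complete boundary data $e\in \ult(L, S)$ then yields a complete extension $m\in \cu(X, S)$ with $m|_{L^2}=e$ and
\[
\mathcal{UD}_X^S(d, m)\le C\cdot \mathcal{UD}_L^S(d|_{L^2}, e)\le C\epsilon,
\]
since both $\delta_d(L)$ and $\delta_e(L)$ are bounded by $\epsilon$. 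Lemma \ref{lem:here} then propagates the anti-$\mathfrak{G}$-transmissible property from the subspace $(L, e)=(L, m|_{L^2})$ to $(X, m)$. As $\epsilon$ is arbitrary, this establishes density.

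For the $G_\delta$ part, Corollary \ref{cor:ultopen} guarantees that the set $US(X, S, \mathfrak{G})\subseteq \ult(X, S)$ of all anti-$\mathfrak{G}$-transmissible $S$-valued ultrametrics on $X$ is $G_\delta$ in $(\ult(X, S), \mathcal{UD}_X^S)$. The set targeted by the theorem is exactly $US(X, S, \mathfrak{G})\cap \cu(X, S)$, and intersecting a $G_\delta$ subset with the subspace $\cu(X, S)$ preserves the $G_\delta$ property, so the target set is $G_\delta$ in $(\cu(X, S), \mathcal{UD}_X^S|_{\cu(X, S)^2})$.

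The only point requiring care is the completeness bookkeeping: $d$ is complete by hypothesis, $e$ is complete because $L$ is compact, and the extension $m$ is complete precisely by the moreover clause of Corollary \ref{cor:ultmetext}. Once this chain is verified, the argument of Theorem \ref{thm:ulttrans} carries over essentially verbatim, with $\ult(X, S)$ replaced by $\cu(X, S)$ throughout; I expect no genuine obstacle beyond this verification.
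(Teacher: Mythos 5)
Your proposal is correct and follows exactly the route the paper intends: the paper only remarks that Theorem \ref{thm:ultcm1} is deduced from the latter (completeness) part of Corollary \ref{cor:ultmetext}, and your write-up supplies precisely that bookkeeping — compactness of the $(\omega_0+1)$-subspace makes $e$ complete, the moreover clause gives a complete extension $m$, and the $G_\delta$ part is inherited by intersecting $US(X,S,\mathfrak{G})$ with the subspace $\cu(X,S)$.
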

\begin{rmk}
Let $C\in [1, \infty)$, 
and let $S$ be a $C$-quasi-complete range set with 
the countable coinitiality. 
We can prove an 
$S$-valued ultrametric analogue of \cite[Proposition 4.15]{IsI}, 
which states that satisfying a metric inequality 
on metric spaces is a transmissible property. 
\end{rmk}

\subsection{Examples}\label{subsec:examples}
We show some examples of transmissible properties. 
\subsubsection{The doubling property}\label{subsec:doubling}
For a metic space 
$(X, d)$ 
and for a subset 
$A$ of $X$, 
we set  
	$\alpha_d(A)=\inf\{\, d(x, y)\mid \text{$x, y\in A$ and $x\neq y$}\, \}$. 
A metric space 
$(X, d)$ 
is said to be \emph{doubling} 
if there exist 
$C\in (0, \infty)$ 
and 
$\alpha\in (0, \infty)$ 
such that  for every finite subset 
$A$ of $X$  
we have 
	\[
	\card(A)\le C\left(\frac{\delta_d(A)}{\alpha_d(A)}\right)^{\alpha}. 
	\]
Note that 
$(X, d)$ 
is doubling if and only if 
$(X, d)$ 
has finite Assouad dimension 
(see e.g.,  \cite[Chapter 10]{H}).
\par

Similarly to \cite[Proposition 4.9]{IsI}, 
we obtain:
\begin{prop}
Let $S$ be a range subset with the countable coinitiality. 
The doubling property 
is a transmissible  property with an $S$-ultra-singular parameter. 
\end{prop}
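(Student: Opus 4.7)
The plan is to exhibit a single transmissible parameter $\mathfrak{G}=(Q,P,F,G,Z,\phi)$ whose associated $\mathfrak{G}$-transmissible property is precisely the doubling property, and then to verify $S$-ultra-singularity using the equidistant ultrametric spaces produced in Remark~\ref{rmk:ps}. First I would take $Q=\nn\times\nn$, $P=\rr$, $F(q)=[0,\infty)$ for every $q\in Q$, $G(q)=\{\, n\in\nn\mid n\ge 2\, \}$ for every $q$, and $Z=\{*\}$ a singleton. Identifying $q=(c,k)\in\nn^2$ with the candidate doubling pair $(C,\alpha)=(c,k)$, I define, for any injective sequence $a=(a_1,\dots,a_n)\in\seq(G(q),X)$ in a metrizable space $X$,
\[
\phi^{q,X}(a,*,d)=c\cdot\bigl(\delta_d(A)/\alpha_d(A)\bigr)^{k}-n,\qquad A=\{a_1,\dots,a_n\}.
\]

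Next I would verify axioms (TP1) and (TP2) of Definition~\ref{def:transp}. Since $a$ is injective and $d$ is a metric, $\alpha_d(A)>0$, and both $\delta_d(A)$ and $\alpha_d(A)$ are continuous functions of $d$ in $(\met(X),\mathcal{D}_X)$, being respectively a maximum and a minimum of finitely many coordinate projections $d\mapsto d(a_i,a_j)$; continuity of the map $\phi^{q,X}(a,*,\cdot)$ follows, giving (TP1). Because $\phi^{q,X}(a,*,d)$ depends only on the values $d(a_i,a_j)$ with $a_i,a_j\in A$, axiom (TP2) is immediate upon restriction to any subset containing $A$.

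Then I would establish the equivalence of the $\mathfrak{G}$-transmissible property with the doubling property. If $(X,d)$ is doubling with real constants $C_0,\alpha_0$, select integers $c\ge C_0$ and $k\ge \alpha_0$; using $\delta_d(A)/\alpha_d(A)\ge 1$ we obtain $c(\delta_d(A)/\alpha_d(A))^{k}\ge C_0(\delta_d(A)/\alpha_d(A))^{\alpha_0}\ge \card(A)=n$ for every injective $a=(a_1,\dots,a_n)$ of length at least $2$, so $\phi^{q,X}(a,*,d)\in F(q)$ with $q=(c,k)$. The converse direction is a direct rereading of the definition, and sets of cardinality $\le 1$ satisfy the doubling inequality vacuously.

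Finally, to verify $S$-ultra-singularity, I would fix $q=(c,k)\in Q$ and $\epsilon\in(0,\infty)$. The countable coinitiality of $S$ supplies some $r\in S_+$ with $r\le \epsilon$, and I choose an integer $n\in G(q)$ with $n>c$. Let $(R,d_R)$ be the equidistant $S$-valued ultrametric space on a set of cardinality $n$ with $d_R(x,y)=r$ whenever $x\ne y$, which is $S$-valued since $\{0,r\}\subseteq S$ (a direct instance of the construction in Remark~\ref{rmk:ps}). Then $\delta_{d_R}(R)=\alpha_{d_R}(R)=r\le\epsilon$, and for any injective enumeration $R=\{r_i\}_{i=1}^{n}$ we compute
\[
\phi^{q,R}\bigl(\{r_i\}_{i=1}^{n},*,d_R\bigr)=c\cdot 1^{k}-n=c-n<0\in P\setminus F(q).
\]
The only non-bookkeeping step is the monotonicity observation $\delta_d(A)/\alpha_d(A)\ge 1$ used to replace real doubling constants by integer ones, together with the parallel identity $\delta_{d_R}/\alpha_{d_R}=1$ on equidistant spaces; these drive both the equivalence with doubling and the ultra-singular witness, so I expect no further obstacle.
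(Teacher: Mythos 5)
Your proposal is correct and follows essentially the same route as the paper: a countable parameter set of candidate doubling constants, a $\phi$ built from $\card(A)$ and the ratio $\delta_d(A)/\alpha_d(A)$, and equidistant $S$-valued ultrametric spaces (with $n>c$ points and common distance $r\in S_+$, $r\le\epsilon$) as the ultra-singularity witnesses. The only differences are cosmetic — you use $Q=\nn^2$ and a scalar-valued $\phi$ with $F(q)=[0,\infty)$, while the paper uses $Q=(\qq_{>0})^2$ and the pair $(N,\delta_d(A)/\alpha_d(A))$ landing in a closed region of $(\rr_{>0})^2$ — and your explicit use of $\delta_d(A)/\alpha_d(A)\ge 1$ to pass from real to integer constants is the same monotonicity the paper relies on implicitly.
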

\begin{proof}

Define a map 
$D: (\qq_{>0})^2\to \mathcal{F}((\rr_{> 0})^2)$
by 
\[
	D((q_1, q_2))
	=
	\{\, (x, y)\in (\rr_{> 0})^2\mid x\le q_1y^{q_2}\, \}, 
\]
and define a constant map 
$G_D: (\qq_{>0})^{2}\to \mathcal{P}(\nn)^*$
by 
$G_D(q)=[2, \infty)$.
Put 
$Z_D=\{1\}$. 
For each metrizable space $X$, 
and for each 
$q\in (\qq_{>0})^2$, 
define a map 
$\phi_{D}^{q, X}: \seq(G(q), X)\times Z_D\times \met(X)\to \rr$
by 
	\[
	\phi_{D}^{q, X}(\{a_i\}_{i=1}^N, 1, d)=
	\left(
	N, 
	\frac{\delta_{d}(\{\, a_i\mid i\in \{1, \dots, N\}\, \})}{\alpha_{d}(\{\, a_i\mid i\in \{1, \dots, N\}\, \})}
	\right). 
	\]
Let 
$\mathfrak{DB}=((\qq_{>0})^2, (\rr_{>0})^2, D, G_D, \{1\}, \phi_{D})$. 
Then 
$\mathfrak{DB}$ 
satisfies 
the condition(TP2) in Definition \ref{def:transp}, 
and
we see that $\mathfrak{DB}$ satisfies the condition (TP1). 
Hence $\mathfrak{DB}$ is a transmissible  parameter. 
The $\mathfrak{DB}$-transmissible  property is equivalent to 
the doubling property. 
We next prove that 
$\mathfrak{DB}$ 
is $S$-ultra-singular. 
For  
$q=(q_1, q_2)\in (\qq_{>0})^2$ 
and for 
$\epsilon\in S_+$, 
we denote by  
$(R_q, d_q)$ 
a finite metric space with 
$\card(R_q)>q_1+1$
such that $d_{q}(x, y)=\epsilon$ whenever $x\neq y$. 
Then, $(R_q, d_q)$ is an $S$-valued ultrametric space
$\delta_{d_q}(R_q)=\epsilon$,  
and 
	\[
	\phi^{q, R_q}(R_q, 1, d_q)=(\card(R_q), 1)\not\in D(q).
	\] 
This implies the proposition. 

\end{proof}

\subsubsection{The rich $S$-ultra-pseudo-cones property}
Let $(X, d)$ be a metric space. 
Let 
$\{A_i\}_{i\in \nn}$ 
be a sequence of subsets of $X$,  
and let  
$\{u_{i}\}_{i\in \nn}$ 
be a sequence in 
$(0, \infty)$. 
We say that a metric space 
$(P, d_P)$ 
is a
\emph{pseudo-cone of $X$} 
approximated by 
$(\{A_i\}_{i\in \nn}, \{u_i\}_{i\in \nn})$ 
if 
\[
	\lim_{i\to \infty}\mathcal{GH}((A_i, u_i\cdot d|_{A_i^2}), (P, d_P))=0
\]
(see \cite{IsO}), 
where $\mathcal{GH}$ is the Gromov--Hausdorff distance (see \cite{BBI}). 
For  a metric space 
$(X, d)$,
 we denote by 
 $\pc(X, d)$ 
 the class of all pseudo-cones of 
 $(X, d)$.
 Let $S$ be a range set, and let $T$ be a range subset of $S$ which is countable dense subset of $S$. 
Let 
$\mathscr{U}_T$ 
be the class of 
all finite ultrametric spaces on which  all distances are in $T$. 
We say that a metric space 
$(X, d)$ \emph{has rich $S$-ultra-pseudo-cones} if 
$\mathscr{U}_T$ is contained in $\pc(X, d)$ for some countable dense range subset  $T$ of $S$. 

\begin{lem}\label{lem:ultapprox}
 Let $S$ be a range set, 
 and let $T$ be a countable dense range  subset of $S$.  
Let $X$ be a finite discrete space, 
and let $d\in \ult(X, S)$. 
For every $\epsilon\in (0, \infty)$, 
 there exists a $T$-valued ultrametric $e\in \ult(X, T)$ such that 
for all $x, y\in X$ we have $|d(x, y)-e(x, y)|<\epsilon$. 
\end{lem}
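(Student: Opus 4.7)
The plan is to exploit the fact that on a finite set $X$ the ultrametric $d$ takes only finitely many values, and then to replace those values by nearby values in $T$ in an order-preserving way. Concretely, I would first enumerate the positive values of $d$ on $X^2$ as $r_1 < r_2 < \cdots < r_k$, all of which lie in $S_+$. The core observation is that the ultrametric structure of $(X, d)$ depends only on the relative order of the distances, not on their precise numerical values; this is formalized by Theorem \ref{thm:tp}, which guarantees that for any increasing amenable function $\psi\colon [0, \infty)\to [0, \infty)$, the composition $\psi\circ d$ remains an ultrametric.

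Next, I would pick a perturbation parameter
\[
\delta = \min\!\left(\epsilon,\ \min_{1\le i<k} \tfrac{r_{i+1}-r_i}{3}\right)
\]
(interpreting the second minimum as $\epsilon$ when $k\le 1$). Since $T$ is dense in $S$ and each $r_i\in S$, I can choose $t_i\in T_+$ with $|r_i-t_i|<\delta$; the choice of $\delta$ forces $0 < t_1 < t_2 < \cdots < t_k$. I then define $\psi\colon [0,\infty)\to [0,\infty)$ to be any increasing function with $\psi(0)=0$, $\psi(r_i)=t_i$ for each $i$, and $\psi(s)>0$ for $s>0$ (for instance by linear interpolation on the $r_i$ and extending linearly beyond $r_k$). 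This $\psi$ is increasing and amenable, so Theorem \ref{thm:tp} implies that $e := \psi\circ d$ is an ultrametric on $X$.

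Finally, I would verify the three required properties. The values of $e$ lie in $\{0, t_1,\ldots,t_k\}\subseteq T$, so $e$ is $T$-valued; since $X$ is finite and discrete, $e\in\ult(X,T)$ automatically. For every $x,y\in X$, either $d(x,y)=0$ and then $e(x,y)=0$, or $d(x,y)=r_i$ for some $i$ and then $|d(x,y)-e(x,y)|=|r_i-t_i|<\delta\le\epsilon$.

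The only potentially delicate point is the choice of $\delta$, which must simultaneously be small enough to give the desired approximation and small enough to preserve the strict ordering of the $r_i$. But because $X$ is finite there are only finitely many gaps $r_{i+1}-r_i$ to control, so this is straightforward; no genuine obstacle arises.
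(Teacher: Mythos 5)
Your proposal is correct and follows essentially the same route as the paper: enumerate the finitely many values of $d$, replace each positive value $r_i$ by a nearby $t_i\in T_+$ chosen so that the strict ordering is preserved, and invoke the fact that an increasing amenable reparametrization of an ultrametric is again an ultrametric. The only cosmetic difference is that you cite Theorem \ref{thm:tp} while the paper uses Lemma \ref{lem:ultpreserving}; both suffice since $X$ is finite and discrete.
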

\begin{proof}
Let $a_0, a_1, \dots, a_m$ be a sequence in $S$ with 
$\{\, d(x, y)\mid \text{$x, y\in X$}\, \}=\{ a_0, a_1, \dots, a_m\}$.  
We may assume that 
$a_0=0$ 
and 
$a_i<a_{i+1}$ for all $i$. 
Put $q_0=a_0 (=0)$. 
Since $T$ is dense in $S$, 
we can take  a sequence $q_1, \dots, q_m$ in $T_+$ such that 
$|a_i-q_i|<\epsilon$ 
and $q_{i}<q_{i+1}$ for all for all $i\in \{1, \dots, m\}$. 
Define a function 
$e:X\times X\to T$ by putting 
$e(x, y)=q_i$ if $d(x, y)=a_i$. 
Lemma \ref{lem:ultpreserving} implies  that $e$ is an ultrametric. 
By the definition,  
the ultrametric $e$ satisfies the conditions as required. 
\end{proof}
Since every compact ultrametric space has a finite 
$\epsilon$-net for all $\epsilon \in (0, \infty)$, 
Lemma \ref{lem:ultapprox} implies that for every range set $S$, 
every compact $S$-valued ultrametric space is arbitrarily approximated by
members of $\mathscr{U}_T$ 
in the sense of Gromov--Hausdorff 
for every countable dense range subset $T$ of $S$. 
Thus we have: 
\begin{cor}
Let $S$ be a range set. 
Let $(X, d)$ be an $S$-valued ultrametric space. 
The the following are equivalent to each other 
\begin{enumerate}
\item  $(X, d)$ has rich $S$-ultra-pseudo-cones 
\item $\pc(X, d)$ contains all compact $S$-valued ultrametric spaces.  
\item $\pc(X, d)$ contains $\mathscr{U}_T$ for all countable dense range 
subset $T$ of the range set $S$. 
\end{enumerate}
\end{cor}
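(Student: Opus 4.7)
The plan is to establish the cycle $(2)\To(3)\To(1)\To(2)$, where most implications are essentially bookkeeping and the substance lies in $(1)\To(2)$.

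The implication $(2)\To(3)$ is immediate: every member of $\mathscr{U}_T$ is a finite $T$-valued ultrametric space, hence a compact $S$-valued ultrametric space (since $T\subseteq S$), so if $\pc(X,d)$ contains every compact $S$-valued ultrametric space, it contains $\mathscr{U}_T$ for every countable dense range subset $T$ of $S$. For $(3)\To(1)$, it suffices to exhibit at least one countable dense range subset $T$ of $S$; since $S\subseteq[0,\infty)$ is separable (as a subspace of a separable metric space), it has a countable dense subset, and adjoining $0$ gives a countable dense range subset of $S$, so (3) applied to this $T$ yields (1).

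The main implication is $(1)\To(2)$. Fix a countable dense range $T\subseteq S$ with $\mathscr{U}_T\subseteq\pc(X,d)$. The first step is to show that $\pc(X,d)$ is closed under Gromov--Hausdorff limits via a diagonal extraction: if $\{P_n\}\subseteq\pc(X,d)$ and $\mathcal{GH}(P_n,P)\to 0$, then for each $n$ the definition of pseudo-cone provides subsets $A_i^{(n)}\subseteq X$ and scalars $u_i^{(n)}>0$ with
\[
\mathcal{GH}\bigl((A_i^{(n)},u_i^{(n)}\cdot d|_{(A_i^{(n)})^2}),P_n\bigr)\to 0\quad (i\to\infty);
\]
choosing $i_n$ so that this quantity is less than $1/n$ and invoking the triangle inequality for $\mathcal{GH}$ gives $(A_{i_n}^{(n)},u_{i_n}^{(n)}\cdot d|_{(A_{i_n}^{(n)})^2})\to P$ in $\mathcal{GH}$, whence $P\in\pc(X,d)$. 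The second step uses the remark immediately preceding the corollary (finite $\epsilon$-nets together with Lemma \ref{lem:ultapprox}): every compact $S$-valued ultrametric space $(K,d_K)$ is a Gromov--Hausdorff limit of a sequence in $\mathscr{U}_T$. Since $\mathscr{U}_T\subseteq\pc(X,d)$, the GH-closure just established gives $(K,d_K)\in\pc(X,d)$.

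No serious obstacle is anticipated; the only step requiring attention is the closure of $\pc(X,d)$ under Gromov--Hausdorff limits, which is a routine diagonal argument relying on the triangle inequality for $\mathcal{GH}$. Everything else follows either from the definitions or from facts already catalogued in the excerpt.
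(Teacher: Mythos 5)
Your proposal is correct and follows essentially the same route the paper intends: the corollary is stated right after the remark that every compact $S$-valued ultrametric space is a Gromov--Hausdorff limit of members of $\mathscr{U}_T$ (via finite $\epsilon$-nets and Lemma \ref{lem:ultapprox}), and your argument simply makes explicit the two ingredients the paper leaves implicit, namely the existence of a countable dense range subset of $S$ for $(3)\To(1)$ and the closure of $\pc(X,d)$ under Gromov--Hausdorff limits via the standard diagonal extraction for $(1)\To(2)$.
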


A metric space is said to be 
\emph{rich pseudo-cones} 
if all compact metric spaces are pseudo-cones of it. 
In \cite[Proposition 4.12]{IsI}, 
it was proven that  the rich pseudo-cones property is 
an anti-transmissible property with a singular parameter. 
Similarly, 
we obtain:
\begin{prop}
Let $S$ be a range set with the countable coinitiality. 
The rich $S$-ultra-pseudo-cones property is 
an anti-transmissible property 
with an $S$-ultra-singular transmissible  parameter. 
\end{prop}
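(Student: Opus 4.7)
Plan: I mirror the proof of \cite[Proposition 4.12]{IsI} by constructing an explicit transmissible parameter $\mathfrak{G}$ whose anti-transmissible property coincides with having rich $S$-ultra-pseudo-cones, and then verifying that $\mathfrak{G}$ is $S$-ultra-singular. First, I fix a countable dense range subset $T \subseteq S$ containing $0$ (available since every subset of $[0, \infty)$ is separable). Because a finite ultrametric with distances in the countable set $T$ is encoded by finitely many $T$-values, the class $\mathscr{U}_T$ has countably many isometry types; enumerate them as $\{(F_n, e_n)\}_{n \in \nn}$ and set $N_n = \card(F_n)$.

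I then define $\mathfrak{G} = (Q, P, F, G, Z, \phi)$ by taking $Q = \nn \times \qq_{>0}$, $P = [0, \infty)$, $F(n, r) = [r, \infty)$, $G(n, r) = \{N_n\}$, $Z = (0, \infty)$, and
\[
\phi^{q, X}((a_i)_{i=1}^{N_n}, z, d) = \mathcal{GH}\bigl((\{a_1, \dots, a_{N_n}\}, z \cdot d|_{\{a_1, \dots, a_{N_n}\}^2}), (F_n, e_n)\bigr).
\]
Conditions (TP1) and (TP2) of Definition \ref{def:transp} are routine: the Gromov--Hausdorff distance is $1$-Lipschitz in each metric argument in the supremum norm, so scaling by the fixed $z$ gives continuity in $d$ with respect to $\mathcal{D}_X$; and $\phi^{q, X}$ depends only on $d|_{\{a_1, \dots, a_{N_n}\}^2}$, yielding (TP2) at once.

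The anti-$\mathfrak{G}$-transmissible condition then says: for every $(F_n, e_n) \in \mathscr{U}_T$ and every $r \in \qq_{>0}$ there exist an injective $N_n$-tuple in $X$ and a scale $z > 0$ making the rescaled subspace Gromov--Hausdorff $r$-close to $(F_n, e_n)$, i.e.\ $\mathscr{U}_T \subseteq \pc(X, d)$ with approximating subsets of cardinality exactly $N_n$. A cardinality-trimming argument---based on the fact that a correspondence of distortion less than a quarter of the minimum positive distance in $e_n$ cannot identify two points of $F_n$---removes the cardinality restriction, and the preceding corollary then gives the equivalence with rich $S$-ultra-pseudo-cones.

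The main obstacle is the verification of $S$-ultra-singularity. Given $q = ((F_n, e_n), r_0)$ and $\epsilon > 0$, I must construct a finite $S$-valued ultrametric $(R, d_R)$ with $\card(R) = N_n$, $\delta_{d_R}(R) \le \epsilon$, and a scale $z > 0$ such that $\mathcal{GH}((R, z d_R), (F_n, e_n)) < r_0$. My plan is to use the countable coinitiality of $S$ to pick $s \in S_+$ arbitrarily small, set $c = s/\delta_{e_n}(F_n)$, take $R = F_n$, and build $d_R \in \ult(R, S)$ approximating $c \cdot e_n$ pointwise within a tolerance $\eta \ll s$; scaling back by $z = 1/c$ then recovers a metric pointwise $\eta/c$-close to $e_n$, which bounds the Gromov--Hausdorff distance by $\eta/c < r_0$. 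The delicate point is producing $d_R$ as an honest $S$-valued ultrametric at an arbitrarily small scale: this uses the density of $T$ in $S$ to locate $S$-values near each of the finitely many rescaled distance levels $c \cdot a_i$ of $e_n$, and iterated amalgamations via Proposition \ref{prop:ultamal2} (applied level by level up the tree of $e_n$) to assemble them into an $S$-valued ultrametric on $F_n$ with the required tree structure.
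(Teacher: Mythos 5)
Your construction of the transmissible parameter and the identification of its anti-transmissible property with the rich $S$-ultra-pseudo-cones property are sound: the countability of $Q$, conditions (TP1)--(TP2), and the cardinality-trimming argument all check out, and they follow the template of \cite[Proposition 4.12]{IsI}. The gap is in the verification of $S$-ultra-singularity, at precisely the point you flag as delicate. Density of $T$ in $S$ gives you $T$-points near points of $S$; it does not give you points of $S$ near the rescaled levels $c\cdot a_i$, which in general lie nowhere near $S$. Countable coinitiality only guarantees that $S$ accumulates at $0$; it says nothing about the multiplicative structure of $S\cap(0,s]$ for small $s$, which is what you actually need in order to reproduce the distance spectrum of $(F_n,e_n)$ at a small scale with small relative error.

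Concretely, take $S=\{0,1,2\}\cup\{\,3^{-k}\mid k\in\nn\,\}$, which has countable coinitiality, and let $(F_n,e_n)$ be the three-point ultrametric space with distances $1,2,2$; since $1$ and $2$ are isolated in $S$, this space belongs to $\mathscr{U}_T$ for every countable dense range subset $T$ of $S$. Any $S$-valued ultrametric $d_R$ with $\delta_{d_R}(R)\le 1/3$ takes its positive values in $\{\,3^{-k}\mid k\ge 1\,\}$, so any two of its positive distance values have ratio an integer power of $3$; on the other hand, a correspondence with $(F_n,e_n)$ of distortion $<0.2$ would force two distances of $z\,d_R$ into $(0.8,1.2)$ and $(1.8,2.2)$ respectively, hence with ratio in $(1.5,2.75)$, which contains no power of $3$. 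So no choice of $R$, $d_R$ and $z$ achieves $\mathcal{GH}((R,z\,d_R),(F_n,e_n))<0.1$ once $\epsilon\le 1/3$, and condition (3) of Definition \ref{def:ultsing} fails for this $q$. Moreover, by Lemma \ref{lem:ultsing} the same computation shows that no $S$-valued ultrametric space of diameter $\le 1/3$ has rich $S$-ultra-pseudo-cones, so for this $S$ \emph{no} transmissible parameter equivalent to the property can be $S$-ultra-singular: the obstruction is to the statement under the stated hypothesis, not merely to your route. Your argument does go through when $S$ behaves self-similarly near $0$ (e.g.\ $S=[0,\infty)$, or $\cl(S)\supseteq[0,\eta]$ for some $\eta>0$, or $S_+$ a geometric sequence), so some additional hypothesis of this kind on $S$ is needed to close the proof.
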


%%%%%%%%%%%%%%%%%%%%%%%%%%%%%%%%%%%%%%%
%%%%%%%%%%%%%%%%%%%%%%%%%%%%%%%%%%%%%%%
%%%%%%%%%%%%%%%%%%%%%%%%%%%%%%%%%%%%%%%
%%%%%%%%%%%%%%%%%%%%%%%%%%%%%%%%%%%%%%%
%%%%%%%%%%%%%%%%%%%%%%%%%%%%%%%%%%%%%%%
%%%%%%%%%%%%%%%%%%%%%%%%%%%%%%%%%%%%%%%
%%%%%%%%%%%%%%%%%%%%%%%%%%%%%%%%%%%%%%%
%%%%%%%%%%%%%%%%%%%%%%%%%%%%%%%%%%%%%%%
\section{Local transmissible properties and ultrametrics}\label{sec:ultloc}
In this section, 
we first investigate the basic properties on 
a specific ultrametric $u_S$ on a range set $S$. 
These properties help us to prove Lemma \ref{lem:ultBaire}. 
We also prove  Theorem \ref{thm:ultloc} 
which is a local version of Theorem \ref{thm:ulttrans}.
\par

We define an ultrametric $u_S$ on a range set $S$ in such a way that 
$u_S(x, y)$ is the infimum of 
$\epsilon\in (0, \infty)$ such that 
$x\le y\lor \epsilon$ and 
$y\le x\lor \epsilon$. 
Let 
$d_E$ denote the Euclidean metric on $S$ defined by 
$d_E(x, y)=|x-y|$. 

By the definition of $u_S$, we obtain:
\begin{lem}\label{lem:u-atai}
Let $S$ be a range set. 
Then for all distinct 
$x, y\in [0, \infty)$, 
we have 
$u_S(x, y)=x\lor y$. 
Hence 
$u_S$ is an $S$-valued ultrametric on $S$. 
\end{lem}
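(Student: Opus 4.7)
The plan is to unpack the definition of $u_S$ and compute it directly; the result then falls out essentially by inspection, and the ultrametric axioms follow immediately.

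First I would handle the value computation. Fix distinct $x, y \in [0, \infty)$ and assume without loss of generality that $x < y$. The defining condition $x \le y \lor \epsilon$ is automatic for every $\epsilon \ge 0$, so it contributes no constraint. The condition $y \le x \lor \epsilon$, combined with $x < y$, forces $\epsilon \ge y$. Hence the set of admissible $\epsilon$ is exactly $[y, \infty)$, and its infimum is $y = x \lor y$. This gives the identity $u_S(x, y) = x \lor y$ for distinct $x, y$.

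Next I would verify the ultrametric axioms. Symmetry and the identity $u_S(x, x) = 0$ are immediate from the symmetric form of the defining condition. To verify the strong triangle inequality $u_S(x, y) \le u_S(x, z) \lor u_S(z, y)$, I would dispose of the degenerate cases $x = y$, $z = x$, $z = y$ directly, and in the remaining case where the three points are pairwise distinct, apply the identity just proved to get
\[
u_S(x, z) \lor u_S(z, y) = (x \lor z) \lor (z \lor y) = x \lor y \lor z \ge x \lor y = u_S(x, y).
\]
Finally, since the only values taken by $u_S$ on $S \times S$ are $0$ and maxima of pairs of elements of $S$, and $S$ is closed under the max operation pairwise (as $x \lor y \in \{x, y\} \subseteq S$), the ultrametric $u_S$ is $S$-valued.

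There is no genuine obstacle here; the only point requiring minor care is to ensure the infimum in the definition is attained in the right place, which comes down to observing that for $x < y$ the first inequality is free while the second is tight at $\epsilon = y$. The lemma is essentially a restatement of the construction already recorded in Remark \ref{rmk:ps}, now packaged with the explicit formula $u_S(x, y) = x \lor y$.
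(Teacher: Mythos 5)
Your proof is correct and takes the same route the paper intends: the paper states this lemma with only the remark ``By the definition of $u_S$, we obtain,'' and your direct computation of the admissible set of $\epsilon$ (namely $[x\lor y,\infty)$ for distinct $x,y$), followed by the verification of the ultrametric axioms via the formula $u_S(x,y)=x\lor y$, is exactly the intended unpacking, matching the Delhomm\'{e}--Laflamme--Pouzet--Sauer construction recalled in Remark \ref{rmk:ps}.
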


By the definitions of 
$d_E$ and 
$u_S$, we have:
\begin{lem}\label{lem:ookii}
Let $S$ be a range set. 
For all 
$a, b\in S$, 
we have 
\[
d_E(a, b)\le u_S(a, b).
\]
Moreover, 
the identity map 
$1_{S}:(S, u_S)\to (S, d_E)$ 
is 
continuous. 
\end{lem}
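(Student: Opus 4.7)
The plan is to deduce both parts directly from Lemma \ref{lem:u-atai}, which gives the explicit formula $u_S(a,b) = a \lor b$ for distinct $a, b$.

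First I would handle the inequality. If $a = b$, then both sides are zero and equality holds trivially. If $a \neq b$, I may assume without loss of generality (by symmetry of both $d_E$ and $u_S$) that $a > b$; since $a, b \in S \subseteq [0, \infty)$, we get $d_E(a,b) = a - b \le a = a \lor b$, and by Lemma \ref{lem:u-atai} the right-hand side equals $u_S(a,b)$. Thus $d_E(a,b) \le u_S(a,b)$ in all cases.

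For the continuity of $1_S : (S, u_S) \to (S, d_E)$, the inequality just established shows that $1_S$ is in fact $1$-Lipschitz: for all $a, b \in S$,
\[
d_E(1_S(a), 1_S(b)) = d_E(a, b) \le u_S(a, b).
\]
Every Lipschitz map between metric spaces is continuous, so $1_S$ is continuous.

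I do not expect any real obstacle here; the entire content of the lemma is an immediate consequence of the formula $u_S(a,b) = a \lor b$ combined with the trivial bound $|a - b| \le a \lor b$ on $[0, \infty)$.
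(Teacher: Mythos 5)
Your proof is correct and matches the paper's treatment: the paper simply asserts the lemma "by the definitions of $d_E$ and $u_S$," and your argument supplies exactly the intended elementary details, namely $|a-b|\le a\lor b$ on $[0,\infty)$ combined with the formula $u_S(a,b)=a\lor b$ from Lemma \ref{lem:u-atai}, with continuity following from the resulting $1$-Lipschitz bound.
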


\begin{lem}\label{lem:ultcomp}
Let $S$ be a range set. Then 
the ultrametric space 
$(S, u_S)$ is complete. 
\end{lem}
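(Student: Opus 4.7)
The plan is to show that any Cauchy sequence in $(S, u_S)$ converges to an element of $S$, by a case split on whether the sequence is eventually constant. The crucial observation, which the proof rests on entirely, is Lemma \ref{lem:u-atai}: whenever $x \neq y$ in $S$, we have $u_S(x, y) = x \lor y$, so each Cauchy condition $u_S(x_n, x_m) < \epsilon$ forces both $x_n$ and $x_m$ to be strictly below $\epsilon$ as soon as they differ.

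First I would fix a Cauchy sequence $\{x_n\}_{n \in \nn}$ in $(S, u_S)$. If the sequence is eventually constant, say $x_n = a$ for all $n \ge N$, then $a \in S$ and $x_n \to a$ in $(S, u_S)$, and we are done. Otherwise, for every $N$ there exist indices $n, m \ge N$ with $x_n \ne x_m$.

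The main step is to show that in the non-constant case, $x_n \to 0$ in $(S, u_S)$. Fix $\epsilon \in (0, \infty)$, and by the Cauchy property choose $N$ such that $u_S(x_n, x_m) < \epsilon$ whenever $n, m \ge N$. I claim $x_n < \epsilon$ for every $n \ge N$. Indeed, if $x_{n_0} \ge \epsilon$ for some $n_0 \ge N$, then by the non-eventually-constant assumption I can pick $m_0 \ge N$ with $x_{m_0} \ne x_{n_0}$; Lemma \ref{lem:u-atai} then gives $u_S(x_{n_0}, x_{m_0}) = x_{n_0} \lor x_{m_0} \ge \epsilon$, contradicting the choice of $N$. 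Consequently, using Lemma \ref{lem:u-atai} again, $u_S(x_n, 0) = x_n < \epsilon$ whenever $x_n \ne 0$, and trivially $u_S(x_n, 0) = 0$ when $x_n = 0$. Since $0 \in S$ by the definition of a range set, we conclude $x_n \to 0$ in $(S, u_S)$, which completes the proof.

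There is no real obstacle here; the only subtlety is that one must treat the eventually constant case separately, since in that case the limit need not be $0$ and the argument above does not apply directly.
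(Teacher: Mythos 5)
Your proof is correct and follows essentially the same strategy as the paper's: a case split followed by using Lemma \ref{lem:u-atai} to force the tail of a non-constant Cauchy sequence below $\epsilon$, hence to converge to $0\in S$. The only cosmetic difference is the dichotomy (you split on ``eventually constant'' versus not, while the paper splits on whether some value occurs infinitely often), and both versions close the argument correctly.
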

\begin{proof}
Let 
$\{a_n\}_{n\in \nn}$ 
be a Cauchy sequence in 
$(S, u_S)$. 
Assume that  there exists 
$a\in S$ 
such that 
$\{\, n\in \nn\mid a_n=a\, \}$ 
is infinite. 
Since 
$\{a_n\}_{n\in \nn}$ 
is Cauchy, 
 it is convergent to $a$. 
Assume next that  for every 
$a\in S$,  
the set
$\{\, n\in \nn\mid a_n=a\, \}$ 
is finite. 
For every $\epsilon\in (0, \infty)$, 
we can take 
$N\in \nn$ such that  for all 
$n,m >N$, 
we have 
$u_S(a_n, a_m)\le  \epsilon$. 
By the assumption, for every $n\in N$, 
there exists $m>N$ with $a_n\neq a_m$. 
Thus by Lemma \ref{lem:u-atai}, we have 
$a_n\le \epsilon$. 
This implies that 
$\lim_{n\to \infty}a_n=0$. 
Therefore the space 
$(S, u_S)$ is complete. 
\end{proof}

Let $S$ be a range set. 
Let $H$ be a topological space, 
and let $C(H, S)$ be 
the set of all continuous function from $H$ into $S$, 
where $S$ is equipped with the Euclidean topology. 
We define an ultrametric 
$\mathcal{U}_H^S$ 
on $C(H, S)$ by 
\begin{equation*}\label{eq:U}
\mathcal{U}_H^S(f, g)=\min\left\{ 1,\sup_{x\in H}u_S(f(x), g(x)) \right\}. 
\end{equation*}
We also define a metric 
$\mathcal{E}_H$ 
on 
$C(H, [0, \infty))$ by 
\begin{equation*}\label{eq:E}
\mathcal{E}_H(f, g)=\min\left\{ 1,\sup_{x\in H}|f(x)-g(x)| \right\}. 
\end{equation*}
Note that 
$(C(H, [0, \infty)), \mathcal{E}_H)$
 is complete. 

\begin{rmk}
Let $S$ be a range set. 
The space 
$(\ult(X, S), \mathcal{UD}_{X}^S)$ 
and 
$(\met(X), \mathcal{D}_X$) 
can be considered as a topological subspace of   the spaces 
$(C(X^2, S), \mathcal{U}_{X^2}^S)$ 
and 
$(C(X^2, [0, \infty)), \mathcal{E}_{X^2})$, 
respectively. 
Namely, 
\begin{enumerate}
\item for every $S$-valued  ultrametrizable  space $X$, 
we have the inclusion 
$\ult(X, S)\subseteq C(X^2, S)$, 
and 
the metric 
$\mathcal{U}_{X^2}^S$ 
on 
$\ult(X, S)$ 
generates the same topology as  
that induced from 
$\mathcal{UD}_X^S$.  
\item  for every metrizable space $X$, 
we have 
$\met(X)\subseteq C(X^2, [0, \infty))$, 
and 
the metric 
$\mathcal{E}_{X^2}$ 
on 
$\met(X)$
generates the same topology as that induced from 
$\mathcal{D}_X$.  
\end{enumerate}
\end{rmk}

By the definitions of $\mathcal{E}_H$ and $\mathcal{U}_H^S$, and  by Lemma \ref{lem:ookii}, 
we have:
\begin{lem}\label{lem:functionookii}
Let $S$ be a range set. 
Let $H$ be a topological space. 
For all $f, g\in C(H, S)$, 
we have 
\[
\mathcal{E}_H(f, g)\le \mathcal{U}_H^S(f, g).
\]
Moreover, 
the inclusion map 
$(C(H, S), \mathcal{U}_H^S)\to (C(H, [0, \infty)), \mathcal{E}_H)$ 
is continuous. 
\end{lem}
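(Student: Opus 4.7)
The plan is to derive the lemma directly from the pointwise inequality of Lemma \ref{lem:ookii} by taking suprema, and then deduce continuity from the resulting inequality by a standard $\epsilon$-$\delta$ argument (essentially noting that the inclusion is $1$-Lipschitz with respect to these truncated sup-metrics).

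First I would fix $f, g\in C(H, S)$ and, for each $x\in H$, apply Lemma \ref{lem:ookii} to the pair $(f(x), g(x))\in S\times S$, obtaining $|f(x)-g(x)|=d_E(f(x), g(x))\le u_S(f(x), g(x))$. Taking the supremum over $x\in H$ yields
\[
\sup_{x\in H}|f(x)-g(x)|\le \sup_{x\in H}u_S(f(x), g(x)).
\]
Since truncation by $1$ is monotone, applying $\min\{1,\cdot\}$ to both sides gives $\mathcal{E}_H(f, g)\le \mathcal{U}_H^S(f, g)$, which is the asserted inequality.

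For the moreover statement, I would argue as follows. Let $\iota:(C(H, S), \mathcal{U}_H^S)\to (C(H, [0, \infty)), \mathcal{E}_H)$ be the inclusion. The inequality just proved says precisely that $\iota$ is $1$-Lipschitz: for every $f, g\in C(H, S)$ we have $\mathcal{E}_H(\iota(f), \iota(g))\le \mathcal{U}_H^S(f, g)$. Hence for every $\epsilon\in (0, \infty)$ and every $f\in C(H, S)$, the choice $\delta=\epsilon$ works in the metric definition of continuity; equivalently, the preimage of an $\epsilon$-ball around $\iota(f)$ contains the $\epsilon$-ball around $f$. This gives continuity of $\iota$ and completes the proof.

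I do not anticipate any serious obstacle here: the only mild point to be careful about is that both metrics are truncated by $1$, so one must observe that the truncation is a nondecreasing operation so that the pointwise inequality survives both the supremum and the truncation. Everything else is a direct invocation of Lemma \ref{lem:ookii} together with the definitions of $\mathcal{E}_H$ and $\mathcal{U}_H^S$.
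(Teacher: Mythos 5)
Your proof is correct and follows exactly the route the paper takes: the paper derives this lemma directly from the pointwise inequality of Lemma \ref{lem:ookii} together with the definitions of $\mathcal{E}_H$ and $\mathcal{U}_H^S$, which is precisely your argument of taking suprema, truncating, and reading off $1$-Lipschitz continuity.
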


Similarly to Lemma \ref{lem:ultcomp}, 
Lemma \ref{lem:functionookii} and  the completeness of the space  
$(C(H, [0, \infty)),  \mathcal{E}_H)$ lead to the following:
\begin{lem}\label{lem:ultcompfunc}
Let $S$ be a range set. 
Let $H$ be a topological space. 
Then  the ultrametric space 
$(C(H, S), \mathcal{U}_H^S)$ is complete. 
\end{lem}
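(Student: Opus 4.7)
The plan is to follow the strategy of Lemma~\ref{lem:ultcomp}, lifted from the scalar case to the function space by combining the completeness of $(C(H,[0,\infty)),\mathcal{E}_H)$ with that of $(S,u_S)$. In short, the ambient Euclidean completeness gives a candidate limit $f$, while the pointwise $u_S$-completeness forces $f$ to land in $S$, and the two limits are identified by Lemma~\ref{lem:ookii}.

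First, let $\{f_n\}_{n\in\nn}$ be a $\mathcal{U}_H^S$-Cauchy sequence in $C(H,S)$. By Lemma~\ref{lem:functionookii}, the inequality $\mathcal{E}_H\le \mathcal{U}_H^S$ shows that $\{f_n\}$ is also $\mathcal{E}_H$-Cauchy in $C(H,[0,\infty))$. The completeness of the latter space then produces a function $f\in C(H,[0,\infty))$ with $\mathcal{E}_H(f_n,f)\to 0$.

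Next I would verify that $f$ is actually $S$-valued. For every $x\in H$, the obvious inequality $u_S(f_n(x),f_m(x))\le \mathcal{U}_H^S(f_n,f_m)$ shows that $\{f_n(x)\}_{n\in\nn}$ is $u_S$-Cauchy in $S$, hence by Lemma~\ref{lem:ultcomp} admits a $u_S$-limit $s_x\in S$. By Lemma~\ref{lem:ookii}, $u_S$-convergence implies Euclidean convergence, so $s_x$ coincides with the Euclidean pointwise limit, which is $f(x)$. Therefore $f(x)=s_x\in S$ for every $x\in H$, and $f\in C(H,S)$. Finally, to see $\mathcal{U}_H^S(f_n,f)\to 0$, given $\epsilon\in (0,1)$ I would choose $N$ with $\mathcal{U}_H^S(f_n,f_m)\le \epsilon$ for all $n,m\ge N$, so that $u_S(f_n(x),f_m(x))\le \epsilon$ for every such $n,m$ and every $x\in H$. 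Fixing $n\ge N$ and $x\in H$ and letting $m\to \infty$, the $u_S$-convergence $f_m(x)\to f(x)$ already established, together with the continuity of $b\mapsto u_S(f_n(x),b)$ in its own metric topology, yields $u_S(f_n(x),f(x))\le \epsilon$. Taking the supremum over $x\in H$ gives $\mathcal{U}_H^S(f_n,f)\le \epsilon$.

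The main delicacy is the step showing $f$ is $S$-valued: one must run two completeness arguments in parallel (uniform Euclidean and pointwise $u_S$) and then glue their limits together via Lemma~\ref{lem:ookii}. Once that identification is secured, the passage to the limit in the uniform Cauchy inequality is routine, because every metric is continuous in its own topology.
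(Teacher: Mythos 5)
Your proof is correct and follows essentially the same route as the paper's: it combines the pointwise $u_S$-completeness of $(S,u_S)$ (Lemma \ref{lem:ultcomp}) with the uniform Euclidean completeness of $(C(H,[0,\infty)),\mathcal{E}_H)$, and identifies the two candidate limits via the comparison $d_E\le u_S$ (Lemmas \ref{lem:ookii} and \ref{lem:functionookii}). If anything you are slightly more careful than the paper, which stops at the identification $F=G$ and omits the final verification that $\mathcal{U}_H^S(f_n,f)\to 0$; your passage to the limit in the uniform Cauchy estimate supplies exactly that step.
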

\begin{proof}
Let 
$\{f_n\}_{n\in \nn}$ 
be a Cauchy sequence in 
$C(H, S)$. 
Then for every $x\in H$, 
we find that 
$\{f_n(x)\}_{n\in \nn}$ is Cauchy in 
$(S, u_S)$. 
By Lemma \ref{lem:ultcomp}, 
$\{f_n(x)\}_{n\in \nn}$ 
have a limit. 
Let 
$F(x)\in S$ 
be a limit of 
$\{f_n(x)\}_{n\in \nn}$. 
By Lemma \ref{lem:functionookii}, 
the sequence 
$\{f_n\}_{n\in \nn}$ is also Cauchy in 
$(C(H, [0, \infty)), \mathcal{E}_H)$, 
and it has a limit 
$G\in (C(H, [0, \infty)), \mathcal{E}_H)$. 
Note that $G$ is continuous. 
Lemma \ref{lem:ultcomp} yields $F=G$. 
Therefore 
$\{f_n\}_{n\in \nn}$ 
has a limit in 
$C(H, S)$. 
This finishes the proof. 
\end{proof}

In  the proof of Theorem \ref{thm:ultloc}, 
 to apply the intersection property of Baire spaces to  dense $G_{\delta}$ subsets, 
we need the following:
\begin{lem}\label{lem:ultBaire}
Let $S$ be a range set. 
For every second countable  locally compact $S$-valued  ultrametrizable space $X$, 
the space $\ult(X, S)$ is  a Baire space. 
\end{lem}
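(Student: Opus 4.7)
The plan is to realize $\ult(X,S)$ as a $G_{\delta}$ subset of the completely metrizable space $(C(X^{2},S),\mathcal{U}_{X^{2}}^{S})$: completeness is Lemma \ref{lem:ultcompfunc}, the coincidence of the subspace topology with that induced by $\mathcal{UD}_{X}^{S}$ is recorded in the Remark preceding Lemma \ref{lem:functionookii}, and Lemma \ref{lem:gdelta} will then give the Baire property.  Because $X$ is second countable, locally compact, and zero-dimensional (Proposition \ref{prop:ult0}), I would first fix a countable partition $X=\bigsqcup_{n\in I}K_{n}$ into compact clopen pieces, obtained from a compact exhaustion together with the base of compact open neighborhoods (taking $I=\{1\}$, $K_{1}=X$ in the compact case).

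Membership of $d\in C(X^{2},S)$ in $\ult(X,S)$ decomposes into five conditions: (i) $d(x,x)=0$; (ii) $d(x,y)=d(y,x)$; (iii) the strong triangle inequality; (iv) $d(x,y)>0$ whenever $x\ne y$; and (v) $d$ induces $\tau_{X}$.  Conditions (i)--(iii) are closed.  For (iv), the set $X^{2}\setminus\{(x,x):x\in X\}$ is $\sigma$-compact, say equal to $\bigcup_{k}L_{k}$ with each $L_{k}$ compact, and each $\{d:\min_{L_{k}}d>0\}$ is open in $\mathcal{U}_{X^{2}}^{S}$: if $\min_{L_{k}}d=c>0$ and $\mathcal{U}_{X^{2}}^{S}(d,e)<c$, then the rigidity formula $u_{S}(a,b)=a\lor b$ for $a\ne b$ (Lemma \ref{lem:u-atai}) forces $e=d$ on $L_{k}$.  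Hence (iv) is $G_{\delta}$.  I then split (v) into (v-a) each restriction $d|_{K_{n}^{2}}$ induces the subspace topology on $K_{n}$, and (v-b) each $K_{n}$ is $d$-open in $X$.  Condition (v-a) is automatic, since a continuous bijection from a compact space to a Hausdorff one is a homeomorphism.

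For (v-b), I would upgrade the pointwise requirement ``$d(x,X\setminus K_{n})>0$ for every $x\in K_{n}$'' to the uniform separation $\inf_{m\ne n}d(K_{n},K_{m})>0$: if the uniform version fails one can pick $x_{k}\in K_{n}$ and $y_{k}\in X\setminus K_{n}$ with $d(x_{k},y_{k})\to 0$, pass via compactness to a subsequence $x_{k}\to x\in K_{n}$ in $\tau_{X}$, and invoke continuity of $d$ plus the triangle inequality to conclude $d(x,y_{k})\to 0$, violating the pointwise version.  Appealing once more to the rigidity of $u_{S}$, the set $\{d:\inf_{m\ne n}d(K_{n},K_{m})>0\}$ is open in $\mathcal{U}_{X^{2}}^{S}$ (a $d$ achieving $\inf_{m\ne n}d(K_{n},K_{m})=c>0$ agrees with every $e$ satisfying $\mathcal{U}_{X^{2}}^{S}(d,e)<c$ on each product $K_{n}\times K_{m}$ with $m\ne n$), so (v-b) is a countable intersection of open sets.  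Combining everything, $\ult(X,S)$ is $G_{\delta}$ in the completely metrizable space $(C(X^{2},S),\mathcal{U}_{X^{2}}^{S})$, and Lemma \ref{lem:gdelta} finishes the proof.  The main obstacle is the pointwise-to-uniform upgrade in (v-b); this is precisely where the compactness of the atoms $K_{n}$ (rather than merely the local compactness of $X$) is used, and where one avoids an $F_{\sigma}$-type description that would not obviously be $G_{\delta}$.
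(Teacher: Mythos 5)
Your proof is correct, and it shares the paper's overall strategy --- realize $\ult(X, S)$ as a $G_{\delta}$ subset of the completely metrizable space $(C(X^2, S), \mathcal{U}_{X^2}^S)$ and invoke Lemma \ref{lem:gdelta} --- but it handles the crucial ``$d$ generates $\tau_X$'' condition by a genuinely different route. The paper works with a compact exhaustion $K_n\subseteq \nai(K_{n+1})$ and encodes topological compatibility through the sets $L_n$ (uniform separation of $K_n$ from $X\setminus K_k$ for large $k$) and $E_n$ (positivity on compact pieces of $X^2\setminus\Delta_X$), importing both the openness of $L_n, E_n$ and the identity $\ult(X)=Q\cap\bigcap_n L_n\cap\bigcap_n E_n$ from the proof of \cite[Theorem 5.1]{IsI}, and then pulls everything back along the continuous inclusion of $(C(X^2,S),\mathcal{U}_{X^2}^S)$ into $(C(X^2,[0,\infty)),\mathcal{E}_{X^2})$. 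You instead exploit the $0$-dimensionality of $X$ (which the paper does not use at this point) to produce a countable partition into compact clopen atoms $K_n$; this reduces topological compatibility to the single family of conditions $\inf_{m\neq n}d(K_n,K_m)>0$, with the subspace topology on each atom coming for free from compactness, and the pointwise-to-uniform upgrade justified exactly as you describe. Your openness arguments via the rigidity $u_S(a,b)=a\lor b$ for $a\neq b$ are sound (one should take the $\mathcal{U}_{X^2}^S$-radius to be $\min\{1,c\}$ because of the truncation in the definition of $\mathcal{U}_{X^2}^S$, a cosmetic point). What your approach buys is self-containedness --- no appeal to the metric-space argument of \cite[Theorem 5.1]{IsI} --- and a cleaner $G_{\delta}$ description tailored to ultrametrizable spaces; what it gives up is that it is specific to the $0$-dimensional setting, whereas the paper's argument is the direct specialization of a proof that also works for $\met(X)$.
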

\begin{proof}
By Lemma \ref{lem:ultcompfunc}, 
the space 
$(C(X^2, S), \mathcal{U}_{X^2}^S)$ 
is completely metrizable. 
By Lemma \ref{lem:gdelta}, 
in order to prove the lemma, 
it suffices to show that 
$\ult(X, S)$ 
is 
$G_{\delta}$ 
in  
$(C(X^2, S), \mathcal{U}_{X^2}^S)$.

We denote by $Q$ the set of all 
$f\in C(X^2, [0, \infty))$ 
such that 
\begin{enumerate}
	\item for every $x\in X$ we have  $f(x, x)=0$;
	\item for all $x, y\in X$ we have $f(x, y)=f(y, x)$;
	\item for all $x, y, z\in X$ we have  $f(x, y)\le f(x, z)\lor f(z, y)$. 
\end{enumerate}
Namely, 
$Q$ 
is the set of all continuous pseudo-ultrametrics on $X$. 
The set  $Q$ is 
a closed subset in  the space
$(C(X^2, [0, \infty)), \mathcal{E}_{X^2})$. 
Since all closed subsets of a metric space are 
$G_{\delta}$ 
in the whole space, 
the set $Q$ is 
$G_{\delta}$ 
in the space
$(C(X^2, [0, \infty)), \mathcal{E}_{X^2})$. 
\par

Since $X$ is second countable and locally compact, 
 we can  take a sequence 
$\{D_n\}_{n\in \nn}$ 
of compact subsets of 
$X^2$ 
with 
$\bigcup_{n\in \nn}D_n=X^2\setminus \Delta_X$, 
where 
$\Delta_X$ 
is the diagonal set of $X^2$, 
and we can take a sequence 
$\{K_n\}_{n\in \nn}$ 
of compact subsets of $X$ 
with 
$K_n\subseteq \nai(K_{n+1})$ 
and 
$\bigcup_{n\in \nn} K_n=X$, 
where 
$\nai$ 
stands for  the interior operator of $X$. 

As in the proof of   \cite[Theorem 5.1]{IsI}, 
for every 
$n\in \nn$,  
let 
$L_n$ 
be the set of all 
$f\in C(X^2, [0, \infty))$ 
for which 
there exist 
$c\in (0, \infty)$ 
and 
$N\in \nn$ 
such that 
for each  
$k>N$ 
we have 
\[
	\inf_{x\in K_n}\inf_{\  y\in X\setminus K_k}f(x, y)>c. 
\]
For each 
$n\in \nn$,  
let 
$E_n$ 
be 
the set of all 
$f\in C(X^2, [0, \infty))$ 
such that
for each 
$(x, y)\in D_n$ 
we have 
$0<f(x, y)$.  
In the proof of \cite[Theorem 5.1]{IsI}, 
it was proven that 
each 
$L_n$ 
and each 
$E_{n}$ 
are open subsets of the space
$(C(X^2, [0, \infty)), \mathcal{E}_{X^2})$. 
\par

Similarly to the proof of \cite[Theorem 5.1]{IsI}, we obtain
\[
	\ult(X)=
	Q
	\cap 
	\left(
	\bigcap_{n\in \nn} L_n
	\right)
	\cap  
	\left(
	\bigcap_{n\in \nn}E_{n}
	\right)
\]
as subsets of 
$(C(X^2, [0, \infty)), \mathcal{E}_{X^2})$; 
namely, 
$\ult(X)$ is a $G_{\delta}$ 
subset of 
the space 
$(C(X^2, [0, \infty)), \mathcal{E}_{X^2})$. 
Since the inclusion map from the space $(C(X^2, S), \mathcal{U}_{X^2}^S)$ into the space $(C(X^2, [0, \infty)), \mathcal{E}_{X^2})$ is continuous (Lemma \ref{lem:functionookii}), and since 
$\ult(X, S)=\ult(X)\cap C(X^2, S)$, 
we conclude that 
$\ult(X, S)$ is $G_{\delta}$ in the space $(C(X^2, S), \mathcal{U}_{X^2}^S)$. 
This completes the proof of the lemma. 
\end{proof}

For a property $P$ on metric spaces, 
we say that a metric space 
$(X, d)$ 
satisfies the 
\emph{local $P$} 
if 
every non-empty open metric  subspace of $X$ satisfies the property $P$.

\begin{thm}\label{thm:ultloc}
Let 
 $S$ be a quasi-complete range set with the countable coinitiality. 
Let $X$ be a second countable, 
locally compact locally non-discrete $S$-valued ultrametrizable space. 
Then for every $S$-ultra-singular transmissible parameter 
$\mathfrak{G}$, 
the set of all 
$d\in \ult(X, S)$ 
for which 
$(X, d)$ 
satisfies 
the local anti-$\mathfrak{G}$-transmissible property
 is a  dense 
 $G_{\delta}$ set
 in 
 the space 
 $(\ult(X, S), \mathcal{UD}_X^S)$. 
\end{thm}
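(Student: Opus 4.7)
The plan is to reduce the local statement to the global Theorem \ref{thm:ulttrans} applied along a countable base, then synthesize via the Baire property supplied by Lemma \ref{lem:ultBaire}. Since $X$ is second countable, fix a countable base $\{U_n\}_{n\in\nn}$ consisting of non-empty open subsets of $X$. Since $X$ is locally non-discrete, every $U_n$ is itself non-discrete and therefore contains a convergent sequence of distinct points, i.e., an $(\omega_0+1)$-subspace $R_n \subseteq U_n$.

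For each $n$ define
\[
A_n = \{\, d \in \ult(X, S) \mid (U_n, d|_{U_n^2}) \text{ satisfies the anti-}\mathfrak{G}\text{-transmissible property}\,\}.
\]
The central claim is that each $A_n$ is dense $G_{\delta}$ in $(\ult(X, S), \mathcal{UD}_X^S)$. For the $G_{\delta}$ part, the restriction map $\rho_n \colon \ult(X, S) \to \ult(U_n, S)$ given by $\rho_n(d) = d|_{U_n^2}$ is $1$-Lipschitz in the corresponding $\mathcal{UD}$ ultrametrics (a direct consequence of the definition of $\mathcal{UD}_X^S$), hence continuous. By Corollary \ref{cor:ultopen} applied to the $S$-valued ultrametrizable space $U_n$, the set $US(U_n, S, \mathfrak{G})$ is $G_{\delta}$ in $\ult(U_n, S)$, and since $A_n = \rho_n^{-1}(US(U_n, S, \mathfrak{G}))$, this transfers to show $A_n$ is $G_{\delta}$ in $\ult(X, S)$. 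For density, the condition (TP2) of Definition \ref{def:transp} ensures that any anti-$\mathfrak{G}$ witnesses found in the subspace $R_n$ continue to witness anti-$\mathfrak{G}$ in the larger space $U_n$; hence $A_n \supseteq UT(X, S, R_n, \mathfrak{G})$, and the latter is dense in $(\ult(X, S), \mathcal{UD}_X^S)$ by Proposition \ref{prop:ultdense}.

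By Lemma \ref{lem:ultBaire}, the space $(\ult(X, S), \mathcal{UD}_X^S)$ is Baire, so $\bigcap_{n\in\nn} A_n$ is dense $G_{\delta}$ in it. It then remains to identify this intersection with the set of $d \in \ult(X, S)$ such that $(X, d)$ satisfies the local anti-$\mathfrak{G}$-transmissible property. The inclusion $\subseteq$ follows from the fact that every non-empty open $V \subseteq X$ contains some $U_n$, and anti-$\mathfrak{G}$ on $U_n$ transfers upward to $V$ by one more application of (TP2). The reverse inclusion is immediate, since each $U_n$ is itself a non-empty open subspace of $X$.

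The main obstacle is ensuring that every ingredient of the global argument survives localization: we need the Baire property of $(\ult(X, S), \mathcal{UD}_X^S)$ (this is where second countability and local compactness enter, via Lemma \ref{lem:ultBaire}), the continuity of restriction (routine), and the ``upward transfer'' of anti-$\mathfrak{G}$ witnesses along inclusions of subspaces (handled cleanly by (TP2)). Once these three points are in place, the proof is a standard Baire-category synthesis of the tools already assembled in the paper.
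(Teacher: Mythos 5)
Your proof is correct and follows essentially the same route as the paper: a countable base $\{U_n\}$ with $(\omega_0+1)$-subspaces $R_n\subseteq U_n$, density of $UT(X,S,R_n,\mathfrak{G})$ via Proposition \ref{prop:ultdense} together with the hereditarity of the transmissible property (Lemma \ref{lem:here}), the $G_\delta$ structure coming from Corollary \ref{cor:ultopen}, and the Baire property of $\ult(X,S)$ from Lemma \ref{lem:ultBaire}. The only cosmetic difference is that you obtain the $G_\delta$-ness of each $A_n$ by pulling back $US(U_n,S,\mathfrak{G})$ along the $1$-Lipschitz restriction map, whereas the paper writes the same set directly as $\bigcap_{q}\bigcup_{z}\bigcup_{a\in\seq(G(q),U_n)}US(X,S,\mathfrak{G},q,a,z)$ using (TP2).
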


\begin{proof}
Let $S$ be a quasi-complete range set with the countable coinitiality. 
Let $X$ be a second countable,  
locally compact locally non-discrete $S$-valued  ultrametrizable space, 
and let 
$\mathfrak{G}$ 
be an  $S$-ultra-singular transmissible parameter. 
Put 
$\mathfrak{G}=(Q, P, F, G, Z, \phi)$. 
Let 
$E$ 
be the set of all 
$S$-valued ultrametrics 
$d\in \ult(X, S)$ 
for which 
$(X, d)$ 
satisfies 
the local anti-$\mathfrak{G}$-transmissible property. 
Let 
$\{U_i\}_{i\in \nn}$ 
be a countable open base of $X$,  and let 
$\{R_i\}_{i\in \nn}$ 
be a family of 
$(\omega_0+1)$-subspaces of 
$X$ with 
$R_i\subseteq U_i$. 
Since 
$\{U_i\}_{i\in \nn}$ 
is  an open base of $X$, 
by Lemma \ref{lem:here}, 
we have
\[
	E
	=
	\bigcap_{i\in \nn}
	\bigcap_{q\in Q}
	\bigcup_{z\in Z}
	\bigcup_{a\in \seq(G(q), U_i)}
	US(X, S, \mathfrak{G}, q, a, z). 
\]
Corollary  \ref{cor:ultopen} implies that 
$E$ is 
$G_{\delta}$ 
in 
$\ult(X, S)$. 
By the definitions, 
for each 
$i\in \nn$,  
the set 
\[
	\bigcap_{q\in Q}
	\bigcup_{z\in Z}
	\bigcup_{a\in \seq(G(q), U_i)}
	US(X, S, \mathfrak{G}, q, a, z)
\]
contains 
$UT(X, S, R_i, \mathfrak{G})$.  
From Proposition \ref{prop:ultdense} it follows  that 
each set 
$UT(X, S, R_i, \mathfrak{G})$ 
is dense in 
$\ult(X, S)$. 
By Lemma \ref{lem:ultBaire},  
the space 
$\ult(X, S)$ 
is a Baire space. 
Since  $E$ is an intersection of countable dense 
$G_{\delta}$  sets
in 
a Baire space $\ult(X, S)$, 
the set $E$ is dense $G_{\delta}$ in $\ult(X, S)$. 
This completes the proof. 
\end{proof}
%%%%%%%%%%%%%%%%%%%%%%%%%%%
%%%%%%%%%%%%%%%%%%%%%%%%%%%
%%%%%%%%%%%%%%%%%%%%%%%%%%%

%%%%%%%%%%%%%%%%%%%%%%%%%%%
%%%%%%%%%%%%%%%%%%%%%%%%%%%
%%%%%%%%%%%%%%%%%%%%%%%%%%%

%%%%%%%%%%%%%%%%%%%%%%%%%%%
%%%%%%%%%%%%%%%%%%%%%%%%%%%
%%%%%%%%%%%%%%%%%%%%%%%%%%%

%%%%%%%%%%%%%%%%%%%%%%%%%%%
%%%%%%%%%%%%%%%%%%%%%%%%%%%
%%%%%%%%%%%%%%%%%%%%%%%%%%%

%%%%%%%%%%%%%%%%%%%%%%%%%%%
%%%%%%%%%%%%%%%%%%%%%%%%%%%
%%%%%%%%%%%%%%%%%%%%%%%%%%%

%%%%%%%%%%%%%%%%%%%%%%%%%%%%%%%
%%%%%%%%%%%%%%%%%%%%%%%%%%%
%%%%%%%%%%%%%%%%%%%%%%%%%%%
%%%%%%%%%%%%%%%%%%%%%%%%%%%

%%%%%%%%%%%%%%%%%%%%%%%%%%%
%%%%%%%%%%%%%%%%%%%%%%%%%%%
%%%%%%%%%%%%%%%%%%%%%%%%%%%

%%%%%%%%%%%%%%%%%%%%%%%%%%%
%%%%%%%%%%%%%%%%%%%%%%%%%%%
%%%%%%%%%%%%%%%%%%%%%%%%%%%

%%%%%%%%%%%%%%%%%%%%%%%%%%%%%%%
%%%%%%%%%%%%%%%%%%%%%%%%%%%
%%%%%%%%%%%%%%%%%%%%%%%%%%%
%%%%%%%%%%%%%%%%%%%%%%%%%%%

%%%%%%%%%%%%%%%%%%%%%%%%%%%
%%%%%%%%%%%%%%%%%%%%%%%%%%%
%%%%%%%%%%%%%%%%%%%%%%%%%%%

%%%%%%%%%%%%%%%%%%%%%%%%%%%
%%%%%%%%%%%%%%%%%%%%%%%%%%%
%%%%%%%%%%%%%%%%%%%%%%%%%%%

\renewcommand{\arraystretch}{1.2}

%%%%%%%%%%%%%%%%%%%%%%%%%%%%%%%
\setlongtables
\begin{tabularx}{\textwidth}{|X|c|}
\caption{Table of notion}
\endhead
\hline 

Notion
&
Place
\\
\hline 
\hline

Ultrametrics 
& 
Section \ref{intro}\\ 
\hline

$S$-valued metrics 
& 
Section \ref{intro}
\\
\hline

Ultra-norms, ultra-normed modules
&
Section \ref{intro}
\\
\hline

$R$-independency
&
Section \ref{intro}
\\
\hline

$S$-valued ultrametrizability, 
ultrametrizability
& 
Section \ref{intro}
\\
\hline 

Complete $S$-valued ultrametrizability, 
complete ultrametrizability
& 
Section \ref{intro}
\\
\hline

Countable coinitiality 
& 
Section \ref{intro}
\\
\hline

$C$-quasi-completeness, quasi-completeness 
&
Section \ref{intro}
\\
\hline

Amenability of functions 
&
Subsection \ref{modi:ult}
\\
\hline

Invariant metrics 
&
Subsection \ref{subsec:inv}
\\
\hline

$0$-dimensional spaces, 
ultranormal spaces
& 
Subsection \ref{subsec:basic}
\\
\hline

Lower-semicontinuity of set-valued maps 
&
Subsection 
\ref{subsec:conti}
\\
\hline

Baire spaces 
&
Subsection \ref{subsec:Baire}
\\
\hline 

Eventually $o$-valued 
maps 
&
Subsection \ref{subsec:31}
\\
\hline

$(T, \le_T)$-valued ultrametric spaces
& 
Subsection \ref{subsec:genult}
\\
\hline

Transmissible parameters
&
Subsection \ref{subsec:61}
\\
\hline

The $\mathfrak{G}$-transmissible property
&
Subsection \ref{subsec:61}
\\
\hline

The anti-$\mathfrak{G}$-transmissible property
&
Subsection \ref{subsec:61}
\\
\hline

$S$-ultra-singularity
&
Subsection \ref{subsec:62}
\\
\hline

$(\omega_0+1)$-spaces
&
Subsection \ref{subsec:62}
\\
\hline

The doubling property
&
Subsection \ref{subsec:examples}
\\
\hline 

The rich $S$-ultra-pseudo-cones property
&
Subsection \ref{subsec:examples}
\\
\hline

Pseudo-cones 
&
Subsection \ref{subsec:examples}
\\
\hline

\end{tabularx}

%%%%%%%%%%%%%%%%%%%%%%%%%%%
%%%%%%%%%%%%%%%%%%%%%%%%%%%
%%%%%%%%%%%%%%%%%%%%%%%%%%%

%%%%%%%%%%%%%%%%%%%%%%%%%%%
%%%%%%%%%%%%%%%%%%%%%%%%%%%
%%%%%%%%%%%%%%%%%%%%%%%%%%%
\setlongtables
\begin{tabularx}{\textwidth}{|l|X|}
\caption{Table of symbols}
\endhead
\hline 

Symbol 
&
Description
\\
\hline
\hline 

$\lor$ & 
The maximal operator of $\rr$
\\ 
\hline
$0_R$, $0_V$ & 
The zero elements of a commutative ring $R$ and 
a module $V$,  respectively
\\
\hline

$\nn$ 
& 
The set of all positive integers
\\
\hline

$\cl(A)$ 
& 
The closure of $A$ in an ambient space of $A$
\\
\hline

$\card(A)$ 
& 
The cardinality of a set $A$
\\
\hline

$B(x, r)$, $B(x, r; d)$
& 
The closed ball centered at $x$ with radius $r$ 
in a metric space $(X, d)$
\\
\hline

$U(x, r)$, $U(x, r; d)$
& 
The open ball centered at $x$ with radius $r$ 
in a metric space $(X, d)$\\
\hline

$S_{+}$ 
& 
$S_{+}=S\setminus \{0\}$, where $S$ is a range set
\\
\hline

$\met(X)$ 
& 
The set of all metrics generating the same  topology on $X$ 
\\
\hline

$\ult(X, S)$ 
& 
The set of all $S$-valued ultrametrics  generating the same  topology 
on $X$
\\
\hline

$\ult(X)$ 
& 
The set of all ultrametrics generating the same topology on $X$ ($\ult(X)=\ult(X, [0, \infty))$)
\\
\hline

$\mathcal{D}_X$ 
& 
The metric on $\met(X)$
\\
\hline

$\mathcal{UD}_X^S$ 
& 
The ultrametric on $\ult(X, S)$
\\
\hline

$d\times_{\infty} e$ 
& 
The $\ell^{\infty}$-product metric of metrics $d$ and $e$
\\
\hline

$\mathcal{C}(Z)$ 
& 
The set of all non-empty closed subsets of a  metrizable space $Z$
\\
\hline

$\mathrm{F}(R, X, o)$ 
& 
The free $R$-module generated by $X$ 
such that  $o$ is its  zero element, 
where $o\not\in X$
\\
\hline

$\map(A, B)$ 
& 
The set of all maps from $A$ to $B$
\\
\hline

$\elel(S, M, o)$ 
& 
The set of all eventually $o$-valued maps 
from 
$S_{+}$ to $M$,  where  $S$ is a range set  and $o\in M$
\\
\hline

$\Delta$ 
& 
The ultrametric on $\elel(S, M, o)$
\\
\hline

$\tau^{*}d$ 
& 
The pullback metric 
induced from a metric $d$ and a bijection $\tau$
defined in Subsection 5.1
\\
\hline

$\mathcal{P}^{*}(\nn)$ 
& 
The set of all non-empty subsets of $\nn$
\\
\hline

$\mathcal{F}(T)$ 
& 
The set of all closed subsets of  a topological space $T$
\\
\hline

$\seq(W, E)$
& 
The set of  all finite injective sequences 
$\{a_i\}_{i=1}^n$ in $E$ with $n\in W$, where 
$W\subset \nn$
\\
\hline

$US( X, S, \mathfrak{G}, q, a, z)$ 
& 
The set of all 
$d\in \ult(X, S)$ 
such that 
$\phi^{q, X}(a, z, d)\in P\setminus F(q)$, 
where $\mathfrak{G}=(Q, P, F, G, Z, \phi)$ is a transmissible parameter
\\
\hline

$US(X, S,  \mathfrak{G})$  
& 
The set of all 
$d\in \ult(X, S)$ 
such that 
$(X, d)$ 
satisfies the anti-$\mathfrak{G}$-transmissible property
\\
\hline

$UT(X, S, R, \mathfrak{G})$  
& 
The set of all 
$d\in \ult(X, S)$
 for which 
$(R, d|_{R^2})$  satisfies the anti-$\mathfrak{G}$-transmissible 
property, where $R$ is an $(\omega_0+1)$-subspace of $X$.  
\\
\hline

$\cu(X, S)$ 
& 
The set of all complete $S$-valued ultrametrics  generating the same topology on $X$
\\
\hline

$\alpha_{d}(A)$ 
& 
$\alpha_{d}(A)=\inf\{\, d(x, y)\mid \text{$x, y\in A$ and $x\neq y$}\, \}$
\\
\hline

$\mathcal{GH}$ 
& 
The Gromov--Hausdorff distance
\\
\hline

 $\pc(X, d)$  
 & 
 The class of  all pseudo-cones of 
 $(X, d)$
 \\
 \hline

$\mathscr{U}_T$ 
&
The class of 
all finite ultrametric spaces 
on which  all distances are in a range  set $T$
\\
\hline

$C(H, S)$ 
& 
The set of all continuous functions from a topological space  $H$ into 
a range set $S$
\\
\hline

$u_S$ 
& 
The ultrametric on a range set $S$ defined by $u_S(x, y)=x\lor y$
\\
\hline

$d_E$ 
& 
The Euclidean metric on a range set $S$
\\
\hline

$\mathcal{U}_H^S$ 
& 
The ultrametric on $C(H, S)$ defined by 
$\mathcal{U}_H^S(f, g)=\min\left\{ 1,\sup_{x\in H}u_S(f(x), g(x)) \right\}$ 
\\
\hline

$\mathcal{E}_H$ 
& 
The metric on $C(H, [0, \infty))$ defined by 
$\mathcal{E}_H(f, g)=\min\left\{ 1,\sup_{x\in H}|f(x)-g(x)| \right\}$ 
\\
\hline

\end{tabularx}

%%%%%%%%%%%%%%%%%%%%%%%%%%%
%%%%%%%%%%%%%%%%%%%%%%%%%%%
%%%%%%%%%%%%%%%%%%%%%%%%%%%

\begin{ac}
The author would like to thank Professor Koichi Nagano for his advice and constant encouragement. 
The author  would also  like to thank the referee for helpful comments and suggestions. 
\end{ac}

%%%%%%%%%%%%%%%%%%%%%%%%%%%%%%%
%%%%%%%%%%%%%%%%%%%%%%%%%%%%%%%%%%%%%%
%%%%%%%%%%%%%%%%%%%%%%%%%%%%%%%%%%%

This preprint of the Work accepted for publication in 
\textit{$p$-Adic Numbers, Ultrametric Analysis and Applications}, 
$\copyright$, copyright (2021), Pleiades Publishing, Ltd.; \\
\verb|https://www.pleiades.online/en/journal/pultraan/#prettyPhoto|

\end{document}